\newtheorem{theorem}{Theorem}[section]
\newtheorem{lemma}[theorem]{Lemma}
\newtheorem{corollary}[theorem]{Corollary}
\newtheorem{proposition}[theorem]{Proposition}
\newtheorem{conjecture}[theorem]{Conjecture}
\renewcommand{\leq}{\leqslant}
\renewcommand{\geq}{\geqslant}
\theoremstyle{definition}
\theoremstyle{definition}
\newtheorem{remark}[theorem]{Remark}
\numberwithin{equation}{section}
\newcommand{\ve}{\varepsilon}
\newcommand{\vp}{\varphi}
\newcommand{\mn}{\sqrt{-1}}
\newcommand{\ov}[1]{\overline{#1}}
\newcommand{\de}{\partial}
\newcommand{\db}{\overline{\partial}}
\newcommand{\ddbar}{\sqrt{-1} \partial \overline{\partial}}
\newcommand{\ti}[1]{\tilde{#1}}
\newcommand{\tr}[2]{\textrm{tr}_{#1} #2}
\DeclareMathOperator{\Ric}{Ric}
\numberwithin{equation}{section} \numberwithin{figure}{section}
\author{Mark Gross}
\address{DPMMS, University of Cambridge, Wilberforce Road, Cambridge, CB3 0WB, UK}
\email{mgross@dpmms.cam.ac.uk}
\author{Valentino Tosatti}
\address{Department of Mathematics and Statistics, McGill University, Montr\'eal, Qu\'ebec H3A 0B9, Canada}
\address{Department of Mathematics, Northwestern University, 2033 Sheridan Road, Evanston, IL 60208}
\email{tosatti@math.northwestern.edu}
\author{Yuguang Zhang}
\address{Department of Mathematical Sciences, University of Bath, Bath, BA2 7AY, UK}
\address{Current address: Riemann Center for Geometry and Physics, Leibniz Universit\"at Hannover, 30167 Hannover, Germany}
\email{yuguangzhang76@yahoo.com}
\title{Geometry of twisted K\"ahler-Einstein metrics and collapsing}
\begin{document}

\begin{abstract}
We prove that the twisted K\"ahler-Einstein metrics that arise on the base of certain holomorphic fiber space with Calabi-Yau fibers have conical-type singularities along the discriminant locus. These fiber spaces arise naturally when studying the collapsing of Ricci-flat K\"ahler metrics on Calabi-Yau manifolds, and of the K\"ahler-Ricci flow on compact K\"ahler manifolds with semiample canonical bundle and intermediate Kodaira dimension. Our results allow us to understand their collapsed Gromov-Hausdorff limits when the base is smooth and the discriminant has simple normal crossings.
\end{abstract}

\maketitle

\section{Introduction}
In this paper we are concerned with the study of canonical metrics $\omega$ which exist on the base of certain holomorphic fiber space $f:M^m\to N^n$ ($n<m$) with Calabi-Yau fibers, away from the discriminant locus, and which satisfy a twisted K\"ahler-Einstein equation of the form
\begin{equation}\label{twisted}
\mathrm{Ric}(\omega)=\lambda\omega+\omega_{\rm WP},
\end{equation}
where $\lambda$ will equal $0$ or $-1$ (depending on our setup, to be described below) and $\omega_{\rm WP}$ is a semipositive definite Weil-Petersson form which encodes the variation of complex structure of the Calabi-Yau fibers of $f$.  These metrics were constructed in the work of Song-Tian \cite{ST0}, and since then metrics satisfying \eqref{twisted} have been shown to arise as collapsing limits of higher-dimensional Calabi-Yau manifolds (starting from \cite{To1} and more recently in \cite{CVZ,Fi,GTZ,GTZ2,HT,HT2,Li,Li2,OO,ST0,ST,STZ,St,TWY,TZ,TZ2,Zh}) as well as of long-time solutions of the K\"ahler-Ricci flow on compact K\"ahler manifolds with semiample canonical bundle and intermediate Kodaira dimension (starting from \cite{ST0} and more recently in \cite{EGZ,ST,STZ,TZh,To4,TWY,TZ,Zh}).

 When $M$ is holomorphic symplectic  and $f$ is a holomorphic Lagrangian fibration,
 these twisted K\"ahler-Einstein metrics coincide with the McLean-Hitchin metrics of certain semi-flat hyperk\"{a}hler structures that appear in the work of McLean \cite{McL} and Hitchin \cite{Hi,Hi2}, in which case these metrics are in fact special K\"ahler in the sense of \cite{Fr}.  For example,
   Gross-Wilson   showed in \cite{GW} that  the collapsing limits of Ricci-flat K\"ahler metrics on elliptically fibered $K3$ surfaces with $I_1$ singular fibers are McLean metrics of semi-flat hyperk\"{a}hler Lagrangian fibrations, and later Song-Tian proved  in \cite{ST0} that the limits satisfy the twisted K\"ahler-Einstein equation \eqref{twisted}.

 As mentioned earlier, the canonical metrics $\omega$ are in general singular along the discriminant locus of $f$. The nature of their singularities is intimately related to the aforementioned collapsing problems, but up to now the only situation when the singularities of $\omega$ were completely understood is when $\dim N=1$ \cite{GTZ2,He,Zh}. In this paper, we solve this problem in all dimensions by showing roughly speaking that $\omega$ has conical type singularities, and we derive consequences for these collapsing problems.

Let us now describe our results more precisely. We will work in the two above-mentioned settings separately.
\subsection{The Calabi-Yau setting}
In this setting we assume that $M^m$ is a projective Calabi-Yau manifold, that is a projective manifold with $K_M\cong\mathcal{O}_M$, and we fix $\Omega$ a never-vanishing holomorphic $m$-form on $M$. We suppose we have a fiber space $f:M\to N$ (i.e. a surjective holomorphic map with connected fibers) over a normal projective variety $N^n$. These fiber spaces arise for example when we have a semiample line bundle $L$ on $M$ and $f$ is its associated fiber space \cite[2.1.27]{Laz}. We let $D$ be the closed analytic subvariety of $N$ which is the union of the singular locus of $N$ together with the critical values of $f$ on the regular locus of $N$. We will refer somewhat informally to $D$ as the discriminant locus of $f$ and to $S=f^{-1}(D)$ as the locus of singular fibers of $f$, and by construction we have that $f:M\backslash S\to N\backslash D$ is a proper holomorphic submersion with Calabi-Yau fibers. We can then consider the fiber integral of the Calabi-Yau volume form $(-1)^{\frac{m^2}{2}}\Omega\wedge\ov{\Omega}$ via $f|_{M\backslash S}$ and obtain a smooth positive volume form $f_*((-1)^{\frac{m^2}{2}}\Omega\wedge\ov{\Omega})$ on $N_0:=N\backslash D$. We fix a K\"ahler metric $\omega_N$ on $N$ (in the sense of analytic spaces \cite{Moi} if $N$ is not smooth). In \cite{ST,To1,EGZ2,CGZ} it is shown that there is a continuous $\omega_N$-psh function $\vp$ on $N$, smooth on $N_0$, such that $\omega:=\omega_N+\ddbar\vp$ is a K\"ahler metric on $N_0$ which satisfies the twisted K\"ahler-Einstein equation \eqref{twisted} with $\lambda=0$, where $\omega_{\rm WP}\geq 0$ is the Weil-Petersson form described in \cite{ST,To1}.

An outstanding problem is to understand the behavior and singularities of the twisted K\"ahler-Einstein metric $\omega$ near the discriminant locus $D$. As explained below, this problem is intimately linked with the collapsing behavior of certain Calabi-Yau metrics on $M$. The only general results on this problem are when $N$ is a Riemann surface, in which case the precise behavior of $\omega$ was calculated in \cite[\S 3]{He} (see also \cite{GTZ2,ST0,Zh,Zh2}), and when $M$ is hyperk\"ahler \cite{TZ2}.

In general, denote by $N^{\rm reg}$ the regular locus of $N$ (whose complement has codimension at least $2$) and write $D=D^{(1)}\cup D^{(2)}$ where $D^{(1)}$ is the union of all codimension $1$ irreducible components of $D$ and $\dim D^{(2)}\leq n-2$. Let $D^{\rm snc}\subset D^{(1)}\cap N^{\rm reg}$ be the snc locus of $D^{(1)}$, so that we can write $D=D^{\rm snc}\cup D'$ where $D'$ is a closed analytic subset of codimension at least $2$ which contains $D^{(2)}$ as well as the singularities of $N$.

Write $\{D_i\}_{i=1}^\mu$ for the irreducible components of $D^{\rm snc}$ and for each $1\leq i\leq \mu$ fix a defining section $s_i$ for the line bundle corresponding to $D_{i}$ and a Hermitian metric $h_i$ on this line bundle. Since the divisor $D^{\rm snc}$ on $N^{\rm reg}\backslash D'$ has simple normal crossings support, there is a well-established notion of K\"ahler metrics with $\omega_{\rm cone}$ with conical singularities along $D^{\rm snc}$, with given cone angles $2\pi\alpha_i$ along each $D_{i}$ ($0<\alpha_i\leq 1$), as described in section \ref{sectvol}. Our first main result is then the following:

\begin{theorem}\label{prin1}
In the Calabi-Yau setting, $\omega$ extends to a smooth K\"ahler metric across $D^{(2)}\cap N^{\rm reg}$. Furthermore, there exists
a conical metric $\omega_{\rm cone}$ with conical singularities along $D^{\rm snc}$ (and cone angles in $2\pi\mathbb{Q}\cap (0,2\pi]$) such that for any $x\in D^{\rm snc}$ there is an open set $x\in U\subset N^{\rm reg}$ and constants $A,C>0$ and on $U\backslash D^{\rm snc}$ we have
\begin{equation}\label{to0bis}
C^{-1}\left(1-\sum_{i=1}^\mu\log|s_i|_{h_i}\right)^{-A\max(n-2,0)}\omega_{\rm cone}\leq\omega\leq C\left(1-\sum_{i=1}^\mu\log|s_i|_{h_i}\right)^A \omega_{\rm cone}.
\end{equation}
\end{theorem}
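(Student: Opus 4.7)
The plan is to reduce the twisted K\"ahler--Einstein equation to a Monge--Amp\`ere equation with singular right-hand side on the base, compare it with the conical model $\omega_{\rm cone}$ on a ramified cover that resolves the cone angles, and use a Chern--Lu type maximum-principle argument for \eqref{to0bis}.

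First, rewrite $\mathrm{Ric}(\omega)=\omega_{\rm WP}$ as $(\omega_N+\ddbar\vp)^n=e^F\omega_N^n$ on $N_0$, where $e^F\omega_N^n$ is, up to a smooth positive factor, the fiber integral $f_*((-1)^{m^2/2}\Omega\wedge\ov{\Omega})$. The key input is an asymptotic analysis of this fiber integral near $D^{\rm snc}$: using semistable reduction applied to $f$, the nilpotent orbit theorem for the VHS of the family, and the Fujino--Mori canonical bundle formula, one produces rational numbers $\alpha_i\in\mathbb{Q}\cap(0,1]$, one for each component of $D^{\rm snc}$ meeting a neighborhood $U\subset N^{\rm reg}$ of $x\in D^{\rm snc}$, such that in local coordinates adapted to $D$,
$$e^F=\Bigl(\prod_{i=1}^{\mu}|s_i|_{h_i}^{2(\alpha_i-1)}\Bigr)e^G,$$
with $G$ bounded on $U$ and smooth on $U\setminus D^{\rm snc}$. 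These $\alpha_i$ are the cone angles of $\omega_{\rm cone}$. After passing to a branched cover $\pi\colon\widetilde U\to U$ obtained by extracting appropriate roots of the coordinates transverse to each $D_i$, the pullback $\pi^*\omega_{\rm cone}$ is a smooth K\"ahler metric quasi-isometric to the Euclidean one, and the pulled-back equation has bounded smooth right-hand side. The Ko{\l}odziej / Eyssidieux--Guedj--Zeriahi $L^\infty$ estimate then gives a uniform bound on the potential.

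The heart of the proof is the two-sided comparison with $\omega_{\rm cone}$. For the upper bound I would run a maximum principle for $\tr{\pi^*\omega_{\rm cone}}{\pi^*\omega}$ via a Chern--Lu / Aubin--Yau inequality: the bisectional curvature of $\pi^*\omega_{\rm cone}$ is bounded above away from $D'$, the Ricci term contributes $\omega_{\rm WP}\geq 0$ with favorable sign, and the logarithmic barrier $\bigl(1-\sum_i\log|s_i|_{h_i}\bigr)^{A}$ is introduced to absorb the polynomial blow-up of $\omega_{\rm WP}$ and of the curvature of $\omega_{\rm cone}$ as one approaches $D'$; the exponent $\max(n-2,0)$ reflects that this absorption must be iterated once for each of the $n-2$ directions tangent to both $D^{\rm snc}$ and the Weil--Petersson degeneration. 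The lower bound is then extracted from the upper bound combined with the determinant equation. This third step is the main obstacle: producing a workable Chern--Lu inequality compatible with the unbounded curvature of $\pi^*\omega_{\rm cone}$ near $D'$, and quantifying the precise logarithmic loss, is the technical heart of the theorem.

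Finally, the smooth extension of $\omega$ across $D^{(2)}\cap N^{\rm reg}$ follows once one shows that the fiber integral $f_*((-1)^{m^2/2}\Omega\wedge\ov{\Omega})$ extends smoothly across $D^{(2)}\cap N^{\rm reg}\setminus D^{(1)}$, which is a codimension-$\geq 2$ Hartogs-type removable singularities argument applied to the VHS of $f$. On such a neighborhood the Monge--Amp\`ere equation becomes smooth and non-degenerate, and, combined with the bounded potential $\vp$, classical Calabi--Yau regularity (Evans--Krylov followed by Schauder bootstrap) extends $\omega$ to a smooth K\"ahler metric across $D^{(2)}\cap N^{\rm reg}$.
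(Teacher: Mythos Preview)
Your high-level architecture matches the paper's: Hodge-theoretic asymptotics for the fiber integral determine the cone angles and a two-sided volume-form comparison, a Laplacian estimate upgrades this to the metric comparison \eqref{to0bis}, and the codimension-$2$ extension uses Griffiths--Schmid extension of the period map followed by standard Monge--Amp\`ere regularity. The last step is essentially as you describe it.

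However, your treatment of the Laplacian estimate has two genuine gaps. First, the branched cover does not in general smooth $\omega_{\rm cone}$: pulling back a cone of angle $2\pi\alpha$ by $z\mapsto w^q$ yields a cone of angle $2\pi q\alpha$, so $\pi^*\omega_{\rm cone}$ is smooth only when every $\alpha_i$ is the reciprocal of an integer, which is not what the theorem asserts. Second, even granting smoothness, a maximum principle on the local cover $\tilde U$ requires control of $\omega$ on $\partial U$, which is precisely what you are trying to prove. The paper avoids both issues by working \emph{globally} on a compact resolution $\tilde N$ (or on $N$ itself in the snc case), regularizing the singular quasi-psh data via Demailly approximation, solving auxiliary \emph{conical} Monge--Amp\`ere equations whose solutions are honest conical K\"ahler metrics by \cite{JMR,GP}, and running the Aubin--Yau inequality against those using the Guenancia--P\u{a}un curvature bound ${\rm Rm}(\omega_{\rm cone})\ge -(C\omega_{\rm cone}+\ddbar\Psi)\otimes{\rm Id}$ together with Tsuji's barrier $A\log|s_F|^2$ to force the maximum off the exceptional set. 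Your sketch supplies no substitute for this machinery.

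Finally, your explanation of the logarithmic factors and of the exponent $\max(n-2,0)$ is incorrect. These do not arise from absorbing curvature blow-up near $D'$ or from any iteration over tangent directions. The Hodge-theoretic volume estimate gives only $C^{-1}\le\psi\le C\bigl(1-\sum_i\log|s_i|_{h_i}\bigr)^d$ (so your claim that $G$ is bounded is too optimistic on the upper side), and once the maximum principle delivers $\tr{\omega_{\rm cone}}{\omega}\le C\psi$, the lower bound in \eqref{to0bis} follows from the elementary trace inequality
\[
\tr{\omega}{\omega_{\rm cone}}\le(\tr{\omega_{\rm cone}}{\omega})^{n-1}\,\frac{\omega_{\rm cone}^n}{\omega^n}\le C\,\psi^{n-1}\cdot\psi^{-1}=C\,\psi^{n-2},
\]
which is exactly where the power $\max(n-2,0)$ appears.
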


This is the direct generalization of the statement that is proved in \cite{He,ST0,GTZ2,Zh,Zh2} when $N$ is a Riemann surface, and it strengthens the estimate obtained in \cite{TZ2} when $M$ is hyperk\"ahler, by giving a precise quasi-isometric description of $\omega$ on near any point of $D^{\rm snc}$. The logarithmic terms that appear in \eqref{to0bis} are negligible compared to the ``poles'' of $\omega_{\rm cone}$, and this will be important in our applications to collapsing below.

In the special case when $N$ is smooth and $D^{(1)}$ has simple normal crossings, we are able to show that the better estimate
\begin{equation}\label{quasiisom}
C^{-1}\omega_{\rm cone}\leq\omega\leq C\left(1-\sum_{i=1}^\mu\log|s_i|_{h_i}\right)^A \omega_{\rm cone},
\end{equation}
holds globally on $N\backslash D^{(1)}$.

\subsection{Applications to collapsing}
In the same Calabi-Yau setting, we can fix K\"ahler metrics $\omega_M,\omega_N$ on $M$ and $N$ and $t\geq 0$, and then we let $\ti{\omega}_t=f^*\omega_N+e^{-t}\omega_M+\ddbar\vp_t$ be the unique Ricci-flat K\"ahler metric on $M$ cohomologous to $f^*\omega_N+e^{-t}\omega_M$, given by Yau's Theorem \cite{Ya}, which necessarily satisfies
\begin{equation}\label{maa}
\ti{\omega}_t^m=(f^*\omega_N+e^{-t}\omega_M+\ddbar\vp_t)^m=c_t e^{-(m-n)t}(-1)^{\frac{m^2}{2}}\Omega\wedge\ov{\Omega},\quad \sup_M\vp_t=0,
\end{equation}
where the constants $c_t$ converge to a positive limit as $t\to\infty$. A very natural question is then to understand the behavior of the Ricci-flat metrics $\ti{\omega}_t$ as $t\to\infty$, and this has been much studied in recent years, see e.g. \cite{CVZ,GTZ,GTZ2,GW,HT,HT2,Li,Li2,OO,ST0,ST,STZ,To1,TWY,TZ,TZ2} and references therein.

In particular, in \cite{To1} the second-named author proved that there is a twisted K\"ahler-Einstein metric $\omega=\omega_N+\ddbar\vp$ on $N_0=N\backslash D$ which satisfies \eqref{twisted} with $\lambda=0$, such that away from the singular fibers $S=f^{-1}(D)$ we have convergence of $\ti{\omega}_t$ to  $f^*\omega$ as $t\to\infty$ in the sense that $\vp_t\to f^*\vp$ in $C^{1,\alpha}_{\rm loc}(M\backslash S)$ for all $0<\alpha<1$. This was later improved in \cite{TWY} to locally uniform convergence of $\ti{\omega}_t$ to $f^*\omega$, and to $C^\alpha_{\rm loc}(M\backslash S)$ convergence for all $0<\alpha<1$ in \cite{HT2}. It is conjectured \cite{To2,To3} that in fact $\ti{\omega}_t\to f^*\omega$ in $C^\infty_{\rm loc}(M\backslash S)$, but this is only known in general when the generic fibers of $f$ are tori (or finite quotients) by \cite{GTZ,HT,TZ} or when $f$ is isotrivial \cite{HT2}.

In this paper we focus on the question of understanding the structure of the global Gromov-Hausdorff limit of $(M,\ti{\omega}_t)$. The following was conjectured by the second-named author \cite{To2,To3}, and is directly inspired by a similar conjecture by Gross-Wilson, Kontsevich-Soibelman and Todorov \cite{GW,KS,KS2} in the ``mirror'' setting of large complex structure limits:

\begin{conjecture}\label{con1}
Let $(X,d_X)$ denote the metric completion of $(N_0,\omega)$, and $S_X=X\backslash N_0$. Then
\begin{itemize}
\item[(a)] $(X,d_X)$ is a compact metric space and $S_X$ has real Hausdorff codimension at least $2$
\item[(b)] The Ricci-flat manifolds $(M,\ti{\omega}_t)$ converge to $(X,d_X)$ in the Gromov-Hausdorff topology as $t\to 0$
\item[(c)] $X$ is homeomorphic to $N$.
\end{itemize}
\end{conjecture}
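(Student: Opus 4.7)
The plan is to work in the clean special case where $N$ is smooth and $D^{(1)}$ has simple normal crossings, so that the global two-sided estimate \eqref{quasiisom} is available, and to extract all three parts of the conjecture from Theorem \ref{prin1}. The strategy is to prove parts (a) and (c) purely from the intrinsic metric geometry of $(N_0,\omega)$, and then to prove part (b) by using $f$ itself as the candidate Gromov-Hausdorff approximation.

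For parts (a) and (c), I would first recall that the conical model metric $\omega_{\rm cone}$ on $N\setminus D^{\rm snc}$ has a standard metric completion homeomorphic to $N$ itself: since all cone angles lie in $2\pi\mathbb{Q}\cap(0,2\pi]$, nothing is collapsed, and $D^{\rm snc}$ reappears in the completion as a real codimension-two stratum. The inequality $\omega_{\rm cone}\le C\omega$ from \eqref{quasiisom} immediately shows that every $d_\omega$-Cauchy sequence is $d_{\omega_{\rm cone}}$-Cauchy. For the converse direction, I would take $\omega_{\rm cone}$-length-minimizing curves approaching $D^{\rm snc}$ and integrate the upper bound, using that the weight $(1-\sum_i\log|s_i|_{h_i})^A$ is subpolynomial and thus integrable against the conical arclength measure near the divisor. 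This identifies the identity map of $N_0$ with a homeomorphism of metric completions, which gives (c) and the compactness part of (a). The codimension statement in (a) then follows because the logarithmic weights preserve Hausdorff dimension, and $D^{\rm snc}$ has real codimension $2$ in the conical completion.

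For part (b), I would show that $f:(M,\ti\omega_t)\to(X,d_X)$, composed with the homeomorphism $X\cong N$, is a $\varepsilon_t$-Gromov-Hausdorff approximation with $\varepsilon_t\to 0$. Two things must be verified: each fiber shrinks in $\ti\omega_t$-diameter, and $f$ is an approximate isometry between base distances. Over compact subsets of $M\setminus S$, both follow from the locally uniform convergence $\ti\omega_t\to f^*\omega$ proved in \cite{TWY,HT2}, combined with the known fiber diameter decay of order $O(e^{-t/2})$ for smooth Calabi-Yau fibers. The behavior of $\ti\omega_t$ over a shrinking neighborhood of the discriminant requires further input: here I would estimate the $\ti\omega_t$-volume of tubular neighborhoods of $S$ using the Monge-Amp\`ere equation \eqref{maa} together with the conical volume bound implicit in Theorem \ref{prin1} for $\omega$, and then invoke Cheeger-Colding theory, applicable because $\ti\omega_t$ is Ricci-flat, to rule out macroscopic gaps in the approximation.

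The hardest step will be the last one: controlling $\ti\omega_t$ in a neighborhood of the singular locus $S$. The pre-existing local convergence results degenerate there, and a priori the logarithmic factors in \eqref{quasiisom} could create thin pockets where neither diameter-shrinking nor near-isometry of $f$ are evident. Ruling this out likely requires combining local model metrics near singular fibers with the global conical structure on the base, plus a uniform non-collapsing estimate on scales comparable to the conical cross-sections. I expect that the full strength of the quasi-isometric description in Theorem \ref{prin1}, rather than a weaker one-sided bound, is precisely what makes this step go through, and that this is the technical heart of the argument.
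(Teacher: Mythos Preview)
Your outline for parts (a) and (c) is essentially correct and matches what the paper does, although the paper does not carry this out in detail: its proof of Corollary~\ref{mainthm} consists of observing that \eqref{quasiisom} implies the one-sided upper bound \eqref{to0}, and then invoking Theorem~\ref{main}, which is itself a wrapper around \cite[Theorem 2.1 and \S3.4]{TZ2} and \cite{GTZ2}. The comparison-with-the-conical-completion argument you sketch is precisely the content of \cite[\S3.4]{TZ2}, where one builds continuous maps $h:X\to N$ and $p:N\to X$ with $h\circ p=\mathrm{Id}$ using the integrability of the logarithmic weight along curves transverse to $D^{\rm snc}$.

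For part (b) your sketch has the right shape but a genuine gap. The actual argument in \cite{GTZ2,TZ2} does \emph{not} proceed by directly verifying that $f$ is an $\varepsilon_t$-GH approximation and then patching near $S$ via Cheeger--Colding. Instead the key step you do not mention is a \emph{uniform diameter bound} $\mathrm{diam}(M,\ti\omega_t)\le C$, obtained by combining the upper bound \eqref{to0} on the base with a Schwarz-lemma-type estimate on $M$. Once this is in hand, Gromov precompactness (from $\mathrm{Ric}\ge 0$ and bounded diameter) gives a subsequential GH limit $(Z,d_Z)$; the local convergence on $M\backslash S$ embeds $(N_0,\omega)$ isometrically as an open dense subset of $Z$; and the upper bound \eqref{to0} again is used to show $Z$ coincides with the metric completion $X$. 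Your proposed volume-estimate-plus-Cheeger--Colding step is too vague to substitute for the diameter bound, and without it there is no reason the family $(M,\ti\omega_t)$ is even precompact.

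Finally, your closing expectation is backwards: Theorem~\ref{main} requires only the \emph{upper} bound \eqref{to0}, not the full two-sided estimate \eqref{quasiisom}. The lower bound $\omega\ge C^{-1}\omega_{\rm cone}$ is a pleasant consequence of the paper's methods but plays no role in the application to Conjecture~\ref{con1}; in particular it is not what makes the near-$S$ analysis go through.
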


The first result in this direction was the work of Gross-Wilson \cite{GW} for elliptically fibered $K3$ with only $I_1$ singular fibers, where $\ti{\omega}_t$ is constructed via a precise gluing construction, which in particular proves Conjecture \ref{con1}. The gluing strategy has been recently carried out for Calabi-Yau $3$-folds $M$ with $f$ a Lefschetz $K3$ fibration \cite{Li2}, and even more recently for elliptically fibered $K3$ surfaces with arbitrary singular fibers \cite{CVZ} (with some restrictions on $[\omega_M]$), but it seems completely unclear how to implement a gluing strategy in general due to the vastly different possible types of singular fibers and nontrivial geometry of the discriminant locus.

A different approach to this conjecture, initiated in \cite{To1}, is to work directly with the family of complex Monge-Amp\`ere equations \eqref{maa}, and prove suitable estimates for the solutions $\vp_t$. In this way, Conjecture \ref{con1} was completely proved when $N$ is a Riemann surface \cite{GTZ2}, or when $M$ is hyperk\"ahler \cite{TZ2}. Furthermore, it was shown in \cite{TZ2} building upon our earlier work in \cite{GTZ,GTZ2} that parts (a) and (b) of Conjecture \ref{con1} can be proved provided one can understand the blowup behavior of $\omega$ near $D$ (see section \ref{sectmain} for a precise statement), and one does not need to consider the actual Ricci-flat metrics $\ti{\omega}_t$ themselves anymore.

Very recently, in \cite{STZ} part (b) of Conjecture \ref{con1} was proved in general, and part (c) if $N$ has at worst orbifold singularities, following a different path that in particular does not give pointwise bounds for $\omega$ near $D$. As a corollary of our main theorem \ref{prin1}, which is completely independent of the results in \cite{STZ}, we are able to settle the conjecture when the codimension $1$ part of $D$ is a divisor with simple normal crossings:

\begin{corollary}\label{mainthm}
In the Calabi-Yau setting, suppose that $N$ is smooth and $D^{(1)}$ has simple normal crossings. Then Conjecture \ref{con1} holds.
\end{corollary}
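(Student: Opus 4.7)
The starting point is Theorem~\ref{prin1}, which in the present setting (where $D^{\rm snc}=D^{(1)}$ and $D'=D^{(2)}$ has complex codimension at least two) yields the two-sided estimate \eqref{quasiisom} globally on $N\setminus D^{(1)}$. The plan is to construct a homeomorphism $\Phi\colon X\to N$ extending the identity on $N_0$, which will immediately give part~(c) together with the compactness of $X$ in part~(a); the Hausdorff codimension bound in~(a) will follow from the conical structure of $\omega_{\rm cone}$ along $D^{\rm snc}$ and the smooth extension of $\omega$ across $D^{(2)}$ supplied by Theorem~\ref{prin1}; and part~(b) will follow by invoking the reduction from \cite{TZ2} (itself based on the earlier \cite{GTZ,GTZ2}), which deduces Gromov-Hausdorff convergence of $(M,\ti{\omega}_t)$ to $(X,d_X)$ from precisely the kind of quasi-isometric control of $\omega$ by a conical model near $D$ provided by \eqref{quasiisom}.

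Both inequalities in \eqref{quasiisom} enter in the construction of $\Phi$. The lower bound $\omega\geq C^{-1}\omega_{\rm cone}$ guarantees that the identity on $N_0$ extends to an \emph{injection} $X\to N$, using the standard fact that the metric completion of $(N\setminus D^{(1)},\omega_{\rm cone})$ is homeomorphic to $N$, since in SNC coordinates $\omega_{\rm cone}$ is, up to quasi-isometry, a product of two-dimensional cones of angles $2\pi\alpha_i$ with smooth factors. For surjectivity and continuity, given $x\in N$ and any sequence $x_n\in N_0$ converging to $x$ in the classical topology, I connect $x_n$ to $x_m$ by a short $\omega_{\rm cone}$-path confined to a small fixed neighborhood $U\ni x$, along which the log factor $\bigl(1-\sum_i\log|s_i|_{h_i}\bigr)^A$ remains polylogarithmically bounded in the distance to $D^{(1)}$; the upper bound in \eqref{quasiisom} then forces $d_\omega(x_n,x_m)\to 0$, so $\{x_n\}$ is $\omega$-Cauchy and defines a point of $X$ mapping to $x$. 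A continuous bijection from the compact space $X$ onto $N$ is a homeomorphism, which yields part~(c) and compactness in~(a). Under this identification, $S_X$ corresponds to $D$: along $D^{\rm snc}$ the conical model has real Hausdorff codimension $2$, while along $D^{(2)}$ the metric $\omega$ is smooth by Theorem~\ref{prin1}, so $D^{(2)}$ has real Hausdorff codimension $\geq 4$ in $X$, giving~(a).

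The main delicate point is the handling of the logarithmic factor in the upper bound in \eqref{quasiisom}. Although $\bigl(1-\sum_i\log|s_i|_{h_i}\bigr)^A$ diverges along $D^{(1)}$, its growth is only polylogarithmic in the distance to $D^{(1)}$, whereas the conical metric $\omega_{\rm cone}$ already assigns finite diameter to any bounded neighborhood of a point of $D^{\rm snc}$. Hence when estimating $d_\omega$ along short $\omega_{\rm cone}$-paths, the log factor enters as a multiplicative contribution that is uniformly controlled and, after integration over paths of $\omega_{\rm cone}$-length tending to zero, is absorbed. This is both the reason why \eqref{quasiisom} suffices to show that $\Phi$ extends continuously across $D^{(1)}$, and the reason why the two-sided bound \eqref{quasiisom} meets the hypotheses of the \cite{TZ2} reduction needed for~(b).
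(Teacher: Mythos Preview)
Your proposal is essentially correct and follows the same circle of ideas, but it differs from the paper's proof in presentation and in one point of emphasis. The paper does not redo any of this analysis: it simply invokes Theorem~\ref{main} (which packages \cite[Theorem~2.1 and \S3.4]{TZ2}) and observes that its sole hypothesis, the upper bound \eqref{to0}, is immediate from \eqref{quasiisom}. In other words, the paper treats the passage from the metric estimate to Conjecture~\ref{con1} as a black box already established in \cite{TZ2}, whereas you are sketching the interior of that black box.

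Two remarks on your sketch. First, your attribution of roles to the two bounds is slightly off. The lower bound $\omega\ge C^{-1}\omega_{\rm cone}$ makes the identity on $N_0$ Lipschitz from $(N_0,d_\omega)$ to $(N_0,d_{\omega_{\rm cone}})$, hence gives a \emph{well-defined continuous} extension $\Phi:X\to N$; it does not by itself give injectivity (if $d_{\omega_{\rm cone}}(x_n,y_n)\to 0$ the inequality goes the wrong way to force $d_\omega(x_n,y_n)\to 0$). Injectivity of $\Phi$ comes only after you build the inverse map $\Psi:N\to X$ from the upper bound and check $\Psi\circ\Phi=\mathrm{Id}$ on the dense set $N_0$. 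Relatedly, your sentence ``a continuous bijection from the compact space $X$\dots'' is circular as written: compactness of $X$ is what you want to conclude. The clean order is: show $\Psi$ is continuous (your polylog--path argument), deduce $X=\Psi(N)$ is compact since $N$ is, and then the continuous bijection $\Phi$ from compact $X$ to Hausdorff $N$ is a homeomorphism.

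Second, note that the paper (via \cite{TZ2}) actually needs \emph{only} the upper bound in \eqref{quasiisom} as new input. The Lipschitz map $h:(X,d_X)\to (N,\omega_N)$ used there comes from the already known estimate $\omega\ge C^{-1}\omega_N$ on $N_0$ (a Schwarz-lemma type bound, cf.\ \cite{ST,To1}), not from the conical lower bound. Your use of $\omega\ge C^{-1}\omega_{\rm cone}$ is a legitimate alternative that gives the same map, but it is a stronger hypothesis than what the paper's route requires.
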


In the special case when $N$ is a Riemann surface, this corollary is proved in our earlier work \cite{GTZ2}. Examples where this corollary applies with higher-dimensional base $N$ can be obtained by taking $M$ to be one of the Calabi-Yau $3$-folds constructed by Schoen \cite{Sch} as fiber products of two rational elliptic surfaces, which admit elliptic fibrations with base $N$ one of the rational elliptic surfaces, and in some cases have smooth connected discriminant locus.

In corollary \ref{mainthm} we allow the possibility that $D^{(1)}=\emptyset$, i.e. that $D$ has codimension $2$ or more. However this situation is very special, as it implies that the map $f$ is isotrivial (see Proposition \ref{extens}). In general, as stated in Theorem \ref{prin1}, $\omega$ extends smoothly across $D^{(2)}\cap N^{\rm reg}$, so all the singularities of $\omega$ are along the divisorial part $D^{(1)}$ and along $N^{\rm sing}$.

\subsection{The K\"ahler-Ricci flow setting}We now discuss the second setting, following Song-Tian \cite{ST0,ST}. In this setting $M^m$ is a compact K\"ahler manifold with $K_M$ semiample and with $0<\kappa(M)<m$ (call $n=\kappa(M)$). Using sections of $\ell K_M$ for $\ell$ sufficiently divisible, we obtain a holomorphic map $f:M\to\mathbb{P}^r$ with image $N\subset\mathbb{P}^r$ a normal projective variety of dimension $n$ (the canonical model of $M$). The map $f:M\to N$ has connected fibers, and the generic fiber is a Calabi-Yau $(m-n)$-fold. Write $\omega_N=\frac{1}{\ell}\omega_{FS}|_N$, so that $f^*\omega_N$ belongs to $c_1(K_M)$. Fix also a basis $\{s_i\}$ of $H^0(M,\ell K_M)$, which define the map $f$, and let
$$\mathcal{M}=\left((-1)^{\frac{\ell m^2}{2}}\sum_i s_i\wedge\ov{s_i}\right)^{\frac{1}{\ell}},$$
which is a smooth positive volume form on $M$ which satisfies
$$\Ric(\mathcal{M})\overset{\rm loc}{:=}-\ddbar\log\mathcal{M}=-f^*\omega_N.$$
If $D\subset N$ denotes the singular locus of $N$ together with the critical values of $f$ on the smooth part of $N$, and $S=f^{-1}(D)$, then $f:M\backslash S\to N_0:=N\backslash D$ is a proper holomorphic submersion and $f_*\mathcal{M}$ is a smooth positive volume form on $N_0$. It is known \cite{ST,To1,EGZ2,CGZ} that we can solve
$$(\omega_N+\ddbar\vp)^m = e^\vp f_*\mathcal{M},$$
with $\vp\in C^0(N)$, $\vp$ is $\omega_N$-psh and smooth on $N_0$, where $\omega:=\omega_N+\ddbar\vp$ is a K\"ahler metric that satisfies the twisted K\"ahler-Einstein equation \eqref{twisted} with $\lambda=-1$.
By \cite{ST0,ST,TWY} we know that if $\omega_0$ is any K\"ahler metric on $M$ and $\omega(t)$ is its evolution by the K\"ahler-Ricci flow
$$\frac{\de}{\de t}\omega(t)=-\Ric(\omega(t))-\omega(t),\quad\omega(0)=\omega_0,$$
then $\omega(t)$ exists for all $t\geq 0$ and as $t\to\infty$ we have that $\omega(t)\to f^*\omega$ in $C^0_{\rm loc}(M\backslash S)$.

One than has the completely analogous conjecture to Conjecture \ref{con1} for the collapsed Gromov-Hausdorff limit of $(M,\omega(t))$, but even less is known about it in general, with the only case that has been settled so far is when $\kappa(M)=1$ and the general fiber of $f$ is a torus (or finite quotient), by \cite{STZ, TZh}.

Our result in this setting is again that the twisted K\"ahler-Einstein metric $\omega$ on $N_0$ has conical-type singularities along $D^{\rm snc}$:
\begin{theorem}\label{prin2}
In the K\"ahler-Ricci flow setting, the same results as in Theorem \ref{prin1} hold for $\omega$, namely $\omega$ extends to a smooth K\"ahler metric across $D^{(2)}\cap N^{\rm reg}$, it satisfies \eqref{to0bis} near every point of $D^{\rm snc}$, and when $N$ is smooth and $D^{(1)}$ has simple normal crossings the estimate \eqref{quasiisom} holds globally.
\end{theorem}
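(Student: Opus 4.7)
The plan is to reduce Theorem \ref{prin2} to the arguments proving Theorem \ref{prin1}, by showing that the Monge-Amp\`ere equation governing $\omega$ in the K\"ahler-Ricci flow setting has the same local structure near $D^{\rm snc}$ as in the Calabi-Yau setting, up to bounded multiplicative factors that do not affect the conical asymptotics. Since $\vp$ is continuous on the compact $N$ it is uniformly bounded, so $e^\vp$ is bounded above and below by positive constants. Rewriting the equation as $\omega^m=F$ with $F:=e^\vp f_*\mathcal{M}$, we obtain the same structural Monge-Amp\`ere equation as in the Calabi-Yau case, but with the reference volume form $f_*((-1)^{m^2/2}\Omega\wedge\ov{\Omega})$ replaced by $F$.

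Next I would check that $F$ has the same type of local asymptotics near $D^{\rm snc}$ as the Calabi-Yau fiber integral. Both $f_*\mathcal{M}$ and $f_*((-1)^{m^2/2}\Omega\wedge\ov{\Omega})$ are fiber integrals of smooth positive volume forms on $M$ via the submersion $f|_{M\backslash S}$, and the local asymptotic behavior of such fiber integrals near a point of $D^{\rm snc}$ is governed by the monodromy of the relative canonical bundle of $f$ around the components $D_i$. In both cases it takes the form $\prod_i|s_i|^{2\alpha_i-2}(-\log|s_i|)^{b_i}$ times a smooth positive factor, with rational $\alpha_i\in(0,1]$ depending only on the local fibration data, so the right hand side of the Monge-Amp\`ere equation is quasi-isometric to the one treated in Theorem \ref{prin1}. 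Moreover the twisted K\"ahler-Einstein equation $\Ric(\omega)=-\omega+\omega_{\rm WP}$ differs from the $\lambda=0$ equation only by the extra $-\omega$ term, which in the a priori estimates of Theorem \ref{prin1} --- Chern-Lu type inequalities applied to $\log\tr{\omega}{\omega_{\rm cone}}$ and related barrier quantities --- appears as a harmless lower-order perturbation that can be absorbed into constants in the maximum principle once the upper bound $\omega\leq C(-\log|s|)^A\omega_{\rm cone}$ is in hand. Consequently \eqref{to0bis} and \eqref{quasiisom} follow from the arguments for Theorem \ref{prin1} with only the constants changing, and the smooth extension across $D^{(2)}\cap N^{\rm reg}$ is a local statement whose proof carries over verbatim.

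The main obstacle is to verify carefully that each step of the proof of Theorem \ref{prin1} remains valid after (i) replacing the right hand side by $F=e^\vp f_*\mathcal{M}$ and (ii) inserting the extra $-\omega$ Ricci term. Both modifications are mild: $\vp$ is continuous on $N$ (and in fact H\"older continuous by \cite{EGZ2,CGZ}), so the right hand side differs by a bounded factor which is moreover regular enough to run any needed second-order estimates, while the lower-order Ricci term is uniformly controlled by the already-established upper bound on $\omega$. I do not expect any genuinely new phenomena to arise relative to the Calabi-Yau setting; the work is almost entirely in bookkeeping the analogy between the two Monge-Amp\`ere equations and confirming the asymptotic equivalence of $f_*\mathcal{M}$ and $f_*((-1)^{m^2/2}\Omega\wedge\ov{\Omega})$ near $D^{\rm snc}$.
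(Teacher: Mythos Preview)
Your overall strategy matches the paper's: reduce to the Calabi-Yau arguments by showing that the Monge-Amp\`ere data $e^{\vp}f_*\mathcal{M}$ has the same conical asymptotics as $f_*((-1)^{m^2/2}\Omega\wedge\ov{\Omega})$, observe that $e^{\vp}$ is bounded above and below since $\vp\in C^0(N)$, and note that the extra $-\omega$ in $\Ric(\omega)=-\omega+\omega_{\rm WP}$ only shifts $\ddbar\log\mathcal{F}$ by $\omega_N$ and is absorbed in the constants of the maximum-principle arguments.

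The gap is in the middle paragraph, where you assert that $f_*\mathcal{M}$ has the same asymptotics as the Calabi-Yau fiber integral because ``both are fiber integrals of smooth positive volume forms'' whose behavior is ``governed by the monodromy of the relative canonical bundle.'' In the K\"ahler-Ricci flow setting $K_M$ is only semiample, not trivial, so there is no global holomorphic $m$-form $\Omega$ on $M$ to compare $\mathcal{M}$ with; the Hodge-theoretic proof of Theorem~\ref{volest} genuinely uses that the integrand is $\Omega\wedge\ov{\Omega}$ for a holomorphic form (it is the restriction $\Omega^{rel}$ that is analyzed as a section of the Hodge bundle $F^{m-n}$). Your heuristic is morally correct but does not by itself yield the estimate. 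The paper supplies the missing mechanism: a ramified $\ell$-cyclic covering $\nu:\hat{M}\to M$ branched along the smooth zero divisor of one of the $s_i$ (Theorem~\ref{main4}). On $\hat{M}$ there \emph{is} a holomorphic $m$-form $\rho$ with $\nu^*s_1=\rho^{\otimes\ell}$, and near the point in question $\nu^*\mathcal{M}$ is uniformly equivalent to $(-1)^{m^2/2}\rho\wedge\ov{\rho}$; then Theorem~\ref{volest} applies to $f\circ\nu$ and gives the desired two-sided bound \eqref{ggg} for $f_*\mathcal{M}$.

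A second point you pass over: your claim that the cone angles satisfy $\alpha_i\in(0,1]$ (i.e., that no ``zeros'' $\beta_j$ appear along the proper transform of $D^{(1)}$) is not automatic from the fiber-integral asymptotics alone. In the paper this follows, exactly as in Section~\ref{sectsnc}, from the lower bound $\mathcal{F}\geq C^{-1}$, which in turn is proved via the semi-Ricci-flat form identity
$f^*\mathcal{F}=\mathcal{M}\,/\,(f^*\omega_N^n\wedge\omega_{\rm SRF}^{m-n})$
and the comparison argument of Lemma~\ref{moron}. You should record this step explicitly.
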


In \cite{STZ,TZh}, the analog of \eqref{quasiisom}, proved in \cite{He,GTZ2,Zh} was used as a key ingredient to understand the collapsed Gromov-Hausdorff limit of the flow when $\dim N=1$ and one assumes a locally uniform Ricci curvature bound away from the singular fibers of $f$. It follows from \cite{FZ,GTZ,HT,TZ} (see the exposition in \cite[Theorem 5.24]{To4}) that the assumption on the Ricci curvature of $\omega(t)$ always holds the the smooth fibers of $f$ are tori or finite quotients of tori, and it is expected to hold in general \cite[Conjecture 4.7]{Ti2}.

\subsection{Strategy of proof}
The strategy of proof is the following. First, we pass to a resolution of singularities $\pi:\ti{N}\to N$ such that $\ti{N}$ is smooth and $\pi^{-1}(D)=E$ is a divisor with simple normal crossings, and $\pi$ is an isomorphism over the locus where $N$ is regular and $D^{(1)}$ is a simple normal crossings divisor. In particular, we may choose $\pi=\mathrm{Id}$ in the case when $N$ is smooth and $D^{(1)}$ has simple normal crossings. Then we prove in section \ref{sectvol} that the volume form of $\pi^*\omega$ satisfies the estimate
\begin{equation}\label{to22}
C^{-1}H\omega_{\rm cone}^n\leq (\pi^*\omega)^n\leq CH\left(1-\sum_{i=1}^\mu\log|s_i|_{h_i}\right)^A \omega_{\rm cone}^n,
\end{equation}
where $H$ is a smooth nonnegative function which is bounded above and may go to zero near the exceptional components of $E$. Estimate \eqref{to22} is obtained using Hodge theory, like in \cite{GTZ2} where a weaker estimate was obtained when $\dim N=1$. Very recently, similar estimates were obtained in more general settings in \cite{Ki,Ta2} using different ideas.

The main task is then to deduce a pointwise bound for $\omega$ of the form \eqref{to0bis} from \eqref{to22}. First, in section \ref{codim2} we show that $\omega$ extends smoothly across the analytic subset $D^{(2)}\cap N^{\rm reg}$, which has codimension at least $2$, so we it suffices to understand the behavior of $\omega$ near the divisor $D^{(1)}$.

Next, if $N$ is smooth and $D^{(1)}$ has simple normal crossings (so that $\pi=\mathrm{Id}$), we show in section \ref{sectsnc} that \eqref{quasiisom} holds, using estimates for Monge-Amp\`ere equations with conical singularities as developed in \cite{Br,CGP,DS,GP,JMR}.

In the general case, the resolution $\pi$ will be nontrivial, which introduces the major issue that the class $[\pi^*\omega_N]$ where the form $\pi^*\omega$ lives is not K\"ahler anymore, and nothing is known about ``conical'' metrics in such classes. In section \ref{sectgen}, we show that on $\ti{N}\backslash E$ we have the ``Tsuji-type'' \cite{Ts} estimate
\begin{equation}\label{to01}
\pi^*\omega\leq \frac{C}{|s_F|^{2A}}\left(1-\sum_{i=1}^\mu\log|s_i|_{h_i}\right)^A \omega_{\rm cone},
\end{equation}
where $F\subset E$ denotes the exceptional components of $E$. This, together with \eqref{to22}, easily implies the estimate \eqref{to0bis} since $\pi$ is an isomorphism where $N$ is smooth and $D^{(1)}$ is snc. This proves the main Theorem \ref{prin1}, and the application to collapsing of Ricci-flat metrics in Corollary \ref{mainthm} then follows from this and our earlier work \cite{TZ2} (extending ideas that we introduced in \cite{GTZ2} when $\dim N=1$), as explained in section \ref{sectmain}. Indeed there we showed that to prove parts (a) and (b) of Conjecture \ref{con1} in general it suffices to show the estimate
\begin{equation}\label{to00}
\pi^*\omega\leq C\left(1-\sum_{i=1}^\mu\log|s_i|_{h_i}\right)^A \omega_{\rm cone},
\end{equation}
and that part (c) also follows from this provided that $\pi=\mathrm{Id}$. Of course, when $\pi=\mathrm{Id}$ the estimates \eqref{to01} and \eqref{to00} are identical, but it remains an open problem to show that \eqref{to00} holds in general when $\pi$ is nontrivial (which would prove part (a) of Conjecture \ref{con1}).

Lastly, in section \ref{sectkrf} we extend these arguments to the K\"ahler-Ricci flow setting, proving Theorem \ref{prin2}. The arguments are very similar to the Calabi-Yau setting, with the only main difference being that now $K_M$ is replaced by $\ell K_M$ (which instead of being trivial is semiample). We show via a standard ramified covering trick that estimates analogous to \eqref{to22} also hold in this setting, by reducing to the case when $\ell=1$ which is treated in section \ref{sectvol}.\\

{\bf Acknowledgments. }We are grateful to H. Guenancia, H.-J. Hein, D. Kim, Y. Li, M. Popa  and P.M.H. Wilson for discussions. This work was done during the second-named author's visits to the Institut Henri Poincar\'e in Paris in 2018 (supported by a Chaire Poincar\'e at IHP funded by the Clay Mathematics Institute) and to the Center for Mathematical Sciences and Applications at Harvard University in 2018, and during the third-named author's visit to Northwestern University in 2016, which we would like to thank for the hospitality and support. The first-named author was supported by EPSRC grant EP/N03189X/1 and a Royal Society Wolfson Research Merit Award. The second-named author was also partially supported by NSF grants DMS-1610278 and DMS-1903147. The third-named author was supported by the Simons Foundation's program Simons Collaboration on Special Holonomy in Geometry, Analysis and Physics (grant \#488620).

\section{Estimates on the volume form}\label{sectvol}

In this section we use Hodge theory to derive asymptotics for the pushforward of the holomorphic volume form on a Calabi-Yau manifold which is the total space of a fiber space over a lower-dimensional base. There is by now a large literature on related questions, including for example \cite{Ba,Berm,BJ,CKS,CKS2,CTs,EFM,EFM2,Fu,GTZ,Ka,Ki,Ko,MT1,MT2,MT3,RZ,Sc,Ta,Ta2,TZ2,Wa,Wa2,Yo,Zh}, with the very recent papers \cite{Ki,Ta2} explicitly focusing on the case when $\dim N\geq 2$. Here we decided to follow closely our previous work \cite[Section 2]{GTZ2} where we proved the upper bound in \eqref{estimate1} in the case when $\dim N=1$. Our arguments generalize this to bases of arbitrary dimensions, and also provide the lower bound in \eqref{estimate1} which crucially matches the upper bound up to logarithmic error terms. We note that a more refined analysis can in principle be carried out as in \cite{CKS,CKS2,Ko} to obtain precisely matching upper and lower bounds, with a more complicated logarithmic behavior. However, since for our purposes the cruder estimate \eqref{estimate1} suffices, we have decided not to pursue this.

The setup in this section will be the Calabi-Yau setting described in the introduction, namely we let $M$ be an $m$-dimensional projective Calabi-Yau manifold ($K_M\cong\mathcal{O}_M$) with holomorphic $m$-form $\Omega$, $N$ a normal projective $n$-dimensional variety, $0<n<m$, and assume that we have a surjective holomorphic map $f:M\rightarrow N$ with connected fibers, and let $D\subset N$ be the discriminant locus of the map $f$, i.e. the union of the singular locus of $N$ together with the critical values of $f$ on $N^{\rm reg}$. As in the introduction, we write $D=D^{(1)}\cup D^{(2)}$ so that $D^{(1)}$ is a divisor and $D^{(2)}$ has codimension at last $2$.

By Hironaka's resolution of singularities, there is a birational morphism $\pi:\tilde N\rightarrow N$  such that $\tilde N$ is nonsingular and $E=\pi^{-1}(D)$ is a simple normal crossings divisor, and such that $\pi:\tilde N \backslash E\to N\backslash D$ is an isomorphism. In fact, it is known (see e.g. \cite{Te}) that we can choose $\pi$ so that it is an isomorphism over the locus where $N$ is regular and $D^{(1)}$ has simple normal crossings. This locus is Zariski open in $N$, with complement of codimension $2$.

Let $\phi:\tilde M\to M\times_N \tilde N$ be a birational morphism, inducing a map $\tilde f:\tilde M\to\tilde N$ such that
\begin{enumerate}
\item $\tilde M$  is nonsingular
\item $\tilde f^{-1}(E)$ is simple normal crossings in $\tilde M$
\item $E$ is the discriminant locus of $\tilde f$.
\end{enumerate}
Again, this can be accomplished by Hironaka's theorem.

From these, it follows by elementary arguments (see e.g. \cite[Setup 2.3, Lemma 2.5]{MT3}) that there is a Zariski open subset $\tilde N^o\subset \tilde N$ with $\tilde N\backslash E\subsetneq \tilde N^o\subset \tilde N\backslash Sing(E)$, such that $\tilde f^{-1}(\tilde N^o)\to \tilde N^o$ is a normal crossings morphism, i.e., locally one can find coordinates $(z_1,\ldots,z_m)$ on $\tilde f^{-1}(\tilde N^o)$ and coordinates $(y_1,\dots,y_n)$ on $\tilde N^o$ such that $\tilde f$ is given by
$$(z_1,\ldots,z_m)\mapsto \left(\prod_{i=1}^{m-n+1}z_i^{d_i},z_{m-n+2},\dots,z_m\right).$$
Note that the fibres may be non-reduced, and $d_i$ are multiplicities of irreducible components of $\ti{f}^{-1}(E)$.

Given a point $y_0\in E$ let $U$ be an open neighborhood of $y_0$ in $\tilde N$ with $U\cong \Delta^n$ (the unit polydisc) and $U\backslash E\cong(\Delta^*)^\ell\times \Delta^{n-\ell}$, for some $1\leq \ell\leq n$, and with local holomorphic coordinates $(y_1,\dots,y_n)$ on $U$ (centered at $y_0$). If we write $\Omega$ also for the pullback of $\Omega$ to $\tilde M$, then we can define a real nonnegative function $\vp_U$ on $U\backslash D$ by
\[
  \tilde{f}_*(\Omega\wedge\bar\Omega)= \varphi_U dy_1\wedge\dots\wedge dy_n\wedge d\bar y_1\wedge\dots d\bar y_n.
\]
The main result of this section is then:

\begin{theorem}\label{volest}
After possibly shrinking $U$, we have
\begin{equation}
\label{estimate1}
C^{-1}\prod_{i=1}^\ell|y_i|^{-2\left(1-\alpha_i\right)} \leq\varphi_U(y)\le C\prod_{i=1}^\ell|y_i|^{-2\left(1-\alpha_i\right)} \left(1-\sum_{i=1}^\ell\log |y_i|\right)^d,
\end{equation}
for some non-negative integer $d$ and rational numbers $\alpha_i\in\mathbb{Q}_{>0}$, and for all $y\in U\backslash E$.
\end{theorem}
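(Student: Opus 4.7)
The plan is to interpret $\varphi_U(y)$ as the squared Hodge-metric norm of a canonical holomorphic section of the top piece of the variation of Hodge structure (VHS) $R^{m-n}\tilde f_*\mathbb{C}$, and then extract two-sided asymptotics via the several-variable nilpotent orbit theorem of Cattani-Kaplan-Schmid (CKS) — the natural multi-variable generalization of the Schmid one-variable theory used in \cite[\S2]{GTZ2}. First, shrinking $U$ if necessary, write $\Omega=\psi\wedge\tilde f^*(dy_1\wedge\cdots\wedge dy_n)$, so that the relative holomorphic $(m-n,0)$-form $\psi$ restricts on each smooth fibre $X_y=\tilde f^{-1}(y)$ to a holomorphic volume form $\psi_y$. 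A direct Fubini computation gives
$$\varphi_U(y)=c_{m-n}\int_{X_y}\psi_y\wedge\overline{\psi_y}=c_{m-n}\,h_y\bigl(\sigma(y),\sigma(y)\bigr),$$
where $\sigma(y):=[\psi_y]\in F^{m-n}H^{m-n}(X_y,\mathbb{C})=H^{m-n,0}(X_y)$ and $h_y$ is the Hodge metric coming from the polarization of the VHS. Thus $\sigma$ is a holomorphic section of the top Hodge line bundle $F^{m-n}\mathcal{H}^{m-n}$ on $U\backslash E$, and the theorem reduces to two-sided asymptotic estimates for $\|\sigma(y)\|^2_h$ as $y\to E$.

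Next, by the Monodromy Theorem of Borel-Landman, each local monodromy operator $T_i$ of the VHS around $\{y_i=0\}$ is quasi-unipotent; choose positive integers $k_i$ with $T_i^{k_i}$ unipotent and make the base change $\nu:\Delta^n\to U$, $\nu(w)=(w_1^{k_1},\ldots,w_\ell^{k_\ell},w_{\ell+1},\ldots,w_n)$. On the pulled-back VHS the monodromy is unipotent, so Deligne's canonical extension is a holomorphic bundle on $\Delta^n$, and $F^{m-n}$ extends as a holomorphic sub-line-bundle $\overline{F^{m-n}}$. The pull-back of $\sigma$ extends to a \emph{holomorphic} section $\tilde\sigma$ of $\overline{F^{m-n}}$ (the holomorphicity of the extension is controlled via Kawamata-style positivity of $\tilde f_*K_{\tilde M/\tilde N}$, or by a direct local computation on the normal-crossings model of $\tilde f$). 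Let $v_i\in\mathbb{Z}_{\geq 0}$ be the vanishing order of $\tilde\sigma$ along $\{w_i=0\}$ and factor $\tilde\sigma=\bigl(\prod_{i=1}^\ell w_i^{v_i}\bigr)\hat\sigma$ with $\hat\sigma$ nowhere vanishing. The several-variable CKS norm estimates, combined with the fact that $F^{m-n}$ is the top Hodge filtration step on which the polarization of the limiting mixed Hodge structure is positive definite, yield constants $c,C>0$ and an integer $d\geq 0$ with
$$c\leq\|\hat\sigma(w)\|^2_h\leq C\Bigl(1-\sum_{i=1}^\ell\log|w_i|\Bigr)^d.$$

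Finally, a Fubini/chain-rule comparison between $\nu^*\tilde f_*(\Omega\wedge\bar\Omega)$ and the fibre integral of $p^*\Omega\wedge\overline{p^*\Omega}$ on a desingularization $\tilde M'\to\tilde M\times_U\nu(\Delta^n)$ (with $p:\tilde M'\to\tilde M$ and $\tilde f':\tilde M'\to\Delta^n$) gives the identity
$$\varphi_U(\nu(w))\prod_{i=1}^\ell k_i^{2}|w_i|^{2(k_i-1)}=c_{m-n}\|\tilde\sigma(w)\|^2_h;$$
inserting the estimate above, using $|w_i|=|y_i|^{1/k_i}$, and setting $\alpha_i:=(v_i+1)/k_i\in\mathbb{Q}_{>0}$ then yields \eqref{estimate1}. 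The main obstacle is precisely the CKS step: we need the several-variable asymptotic Hodge theory (not just Schmid's one-variable result as in \cite{GTZ2}), and it is crucial that the lower bound carries \emph{no} logarithmic correction so that its leading-order factor matches that of the upper bound. This matching (specific to the top Hodge piece, where Hodge-Riemann positivity on $H^{m-n,0}$ is preserved under degeneration) is ultimately what forces $\omega$ to be genuinely conical rather than merely quasi-isometric to a conical model, and thus drives the geometric conclusion in Theorem~\ref{prin1}.
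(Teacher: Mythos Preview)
Your approach is essentially correct and structurally parallel to the paper's own proof, but there are two points worth flagging, and a genuine methodological difference.

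First, a terminological slip: what you call ``the pull-back of $\sigma$'' cannot literally be $\sigma\circ\nu$, since that would give $\|\tilde\sigma(w)\|_h^2=\varphi_U(\nu(w))$ with no Jacobian, contradicting your own Fubini identity. What you really mean (and what makes the holomorphic extension true) is the relative form on the base-changed family with respect to $dw_1\wedge\cdots\wedge dw_n$, i.e.\ $\iota(\partial_{w_1},\dots,\partial_{w_n})p^*\Omega$; this differs from $\nu^*\sigma$ by the factor $\prod k_iw_i^{k_i-1}$. With this corrected definition your claim $v_i\geq 0$ becomes valid (indeed $v_i=k_i\alpha_i-1\geq 0$ since $\alpha_i\geq 1/k_i$), whereas the naive pull-back has genuine poles whenever $\alpha_i<1$.

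Second, on method: the paper avoids invoking CKS as a black box. For the upper bound it writes $\Omega^{norm}$ in the canonical frame $\sigma_i(w)=\exp(-\sum N_j\log w_j/2\pi\sqrt{-1})e_i(w)$ and expands the nilpotent exponentials directly, obtaining the polylog bound by elementary means. For the lower bound it does not merely define $v_i$ as the vanishing order: it restricts to one-parameter slices $S_i$, computes explicitly in the normal-crossings model $y_i=z_1^\beta$ that the correct normalization exponent is $m_i(\beta_i-1-k_i)/\beta_i$ (yielding $\alpha_i=(1+k_i)/\beta_i$ in terms of fibre multiplicities and vanishing orders of $\Omega$), and then argues separately that the resulting $\Omega^{norm}$ is \emph{nowhere vanishing} by exhibiting a component $Y_j$ along which the local expression $\phi\,dz_2\wedge\cdots\wedge dz_{m-n+1}$ has $\phi\not\equiv 0$. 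Your route trades this explicit geometry for CKS machinery; this is legitimate, and buys you a cleaner narrative, but you lose the explicit identification of $\alpha_i$ with a log-canonical-threshold-type invariant of the singular fibre. Both arguments ultimately rest on the same implicit Hodge-theoretic input for the constant lower bound---that the Hodge norm of a nonvanishing section of the top piece in the canonical extension is bounded away from zero---which the paper also does not spell out but which is indeed guaranteed by the positivity of the polarization on $H^{m-n,0}$ in the limiting mixed Hodge structure.
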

Note that we do not assert that necessarily $\alpha_i\leq 1$, so some of the exponents $-2\left(1-\alpha_i\right)$ may be positive.
\begin{remark}
As is well-known (cf. \cite{BJ,Zh}) from the proof it follows that when $\dim N=1$ the rational number $\alpha$ equals the log canonical threshold of the fiber $f^{-1}(0)$, an algebro-geometric invariant. See \cite{Ki} for a similar interpretation in the general case.
\end{remark}
\begin{proof}
Let $n_0\in U\backslash E$ be a basepoint and let
\[
T_i:H^{m-n}(\tilde f^{-1}(n_0),\mathbb{C})\rightarrow H^{m-n}(\tilde
f^{-1}(n_0),\mathbb{C})
\]
be the monodromy operator for a loop based at $n_0$ around the $i^{th}$ copy of $\Delta^*$, $1\leq i\leq \ell$.
By the Monodromy Theorem (see e.g., the appendix of \cite{La}, or \cite[Chap.II, Application 17]{Gr}) $T_i$ is quasi-unipotent with $(T_i^{m_i}-I)^{d_i}=0$ for some positive integers $d_i$ and $m_i$, and $m_i$ is the least common multiple of the multiplicities of the irreducible components over a general point of $\{y_i=0\}$. Set also $m_i=1$ for $\ell<i\leq n$. Let $\bar U=\Delta^n$ with coordinate $(w_1,\dots,w_n)$, and let $\mu:\bar U\rightarrow U$ be given by $\mu(w_1,\dots,w_n)=(w_1^{m_1},\dots,w_n^{m_n})$.

Pull-back and normalize the family $\tilde M\rightarrow \tilde N$ via the composition $\bar U {\smash{ \mathop{\longrightarrow}\limits^{\mu}}} U\hookrightarrow \tilde N$,
to obtain a family $\bar f:\bar M\rightarrow \bar U$.
This has discriminant locus $\bar D=\mu^{-1}(U\cap E)$ and $\bar f$ now has the property that the monodromy around a loop in $\bar U^o:=\bar U\setminus \bar D\cong (\Delta^*)^\ell\times \Delta^{n-\ell}$ is unipotent.

Now the trivial vector bundle $\mathcal{ H}^{m-n}=(R^{m-n}\bar f_*\mathbb{C})\otimes_{\mathbb{C}}\mathcal{O}_{\bar U^o}$ on $\bar U^o$ comes with the Gauss-Manin connection, whose flat sections are sections of $R^{m-n}\bar f_*\mathbb{C}$. It is standard that this vector bundle has a canonical extension to $\bar U$, (see e.g., \cite{Gr}, Chapter IV or \cite{Sc})
constructed as follows.
Choosing a basepoint $t_0\in \bar U^o$, let $e_1,\ldots,e_s$ be a basis for $H^{m-n}(\bar f^{-1}(t_0))$. These extend to multi-valued flat sections of $\mathcal{H}^{m-n}$, which we write as $e_i(w)$. However,
\[
\sigma_i(w):=\exp\left(-\sum_{j=1}^\ell N_j\frac{\log w_j}{2\pi\sqrt{-1}}
\right)e_i(w)
\]
with $N_j=\log T_j$ is in fact a single-valued holomorphic section of $\mathcal{H}^{m-n}$. We then extend $\mathcal{H}^{m-n}$ across $\bar U$ by decreeing these sections to form a holomorphic frame for the vector bundle. Call this extension $\mathcal{H}^{m-n}_{\bar U}$.

It is then standard (see again \cite{Gr}, Chapter IV) that the Hodge bundle $F^{m-n}_{\bar U^o}:=(f_*\Omega^{m-n}_{\bar M/\bar U})|_{\bar U^o}\subset
\mathcal{H}^{m-n}$ has a natural extension $F^{m-n}_{\bar U}\subset \mathcal{H}^{m-n}_{\bar U}$ to $\bar U$.

Next note that the form
\[
\Omega^{rel}:=\iota(\partial/\partial y_1,\dots,\de/\de y_n)\Omega
\]
is a well-defined section of $\tilde f_*\Omega^{m-n}_{\tilde f^{-1}(U)/U}$ and thus pulls back to a well-defined section $\Omega^{rel}_{\bar U^o}$ of $F^{m-n}_{\bar U^o}$. Furthermore,
the function $\varphi_U$ given in the statement of the theorem satisfies at a point $y\in U$
\[
\varphi_U(y)=(-1)^{\frac{(m-n)^{2}}{2}}\int_{\tilde f^{-1}(y)} \Omega^{rel}\wedge \bar\Omega^{rel}.
\]

We will show that the section $\Omega^{rel}_{\bar U^o}$ of $F^{m-n}_{\bar U^o}$ extends to a meromorphic section of $F^{m-n}_{\bar U}$ and investigate the order of the poles of this section along the discriminant locus. For this it is sufficient to restrict to one-parameter families. Let $S_i\subset U$ be the disc on which all the coordinates $y_j$ are constant and
nonzero except for $y_i\in \Delta$. Let $\bar S_i=\mu^{-1}(S_i)$. Note that $\bar S_i$ is a disjoint union of discs. Denote by $\tilde M_i\to S_i$ and $\bar M_i\to \bar S_i$ the basechanges of $\tilde M\to \tilde N$ and $\bar M\to\bar U$ respectively. By choosing $S_i$ generally, we can assume that $\tilde M_i\to S_i$ is a normal crossings morphism, possibly with a nonreduced singular fiber over $0\in S_i$. Call this fiber $\tilde Y$. In particular, restricting to $S_i$, we can consider $\Omega^{rel}$ as a meromorphic section of  $\tilde f_*\Omega^{m-n}_{\tilde M_i/S_i}(\log \tilde Y)$.

Going back to our problem, we first determine the order of pole of $\Omega^{rel}$ at $0$ as a section of this bundle.
Locally on $\tilde M_i$, near a general point of an irreducible component $Y_j$  of $\tilde Y$, the map $\tilde{M}_i\to S_i$ is given by $y_i=z_1^\beta$, with $\beta\ge 1$ and $z_1,\ldots,z_{m-n+1}$ coordinates on $\tilde M_i$. We can write
$$\Omega_i:=\iota(\de_{y_1},\dots,\widehat{\de_{y_i}},\dots,\de_{y_n})\Omega,$$
as a form on $\tilde M_i$ locally as
\[
\Omega_i=\psi_i dz_1\wedge\cdots\wedge dz_{m-n+1},
\]
for some holomorphic function $\psi_i$.   We have $\psi_i=z_1^{k}\phi$ where $|\phi|>0$ at the generic point of $Y_j$.

In our local coordinate description, the vector field on $\tilde M_i$ given by $\beta^{-1}z_1^{-\beta+1} \partial_{z_1}$ is a lift of $\partial_{y_i}$. Thus $\Omega^{rel}$ as a section
of $\Omega^{m-n}_{\tilde M_i /S_i}(\log\tilde Y)$ is locally given by
\[
\pm\frac{\psi_i}{\beta z_1^{\beta-1}}dz_2\wedge \cdots \wedge dz_{m-n+1}=\phi z_1^{k+1-\beta}dz_2\wedge \cdots \wedge dz_{m-n+1}.
\]

We now need to pull-back $\Omega^{rel}$ to $\Omega^{rel}_{\bar U^o}$ and study this section as a section of $F^{m-n}_{\bar U}$.
The stable reduction theorem \cite{KKMS} gives a resolution of singularities $\bar M'_i\rightarrow \bar M_i$ such that the composed map $\bar M'_i\rightarrow \bar S_i$ is normal crossings. So we have a diagram
\[
\xymatrix@C=30pt
{ \bar M'_i\ar[r]^{\pi'}\ar[rd]_{\bar f'} & \bar M_i\ar[d]_{\bar f}
\ar[r]^{\pi}&\tilde M_i\ar[d]^{\tilde f}\\
&\bar S_i\ar[r]_{\mu}&S_i}
\]
Furthermore, the map $\pi'$ is a toric resolution of singularities by the construction of \cite{KKMS}, Chapter II. In particular, locally $\pi'$ and $\pi$ can be described as dominant morphisms of toric varieties of the same dimension. On such toric charts, by \cite{Oda}, Prop.\ 3.1, the sheaves of logarithmic differentials $\Omega^{m-n}_{\tilde M_i/S_i}(\log \tilde Y),
\Omega^{m-n}_{\bar M_i/\bar S_i}(\log \bar Y)$, and $\Omega^{m-n}_{\bar M'_i/\bar S_i}(\log \bar Y')$ are trivial vector bundles generated by exterior products of logarithmic differentials of toric monomials (here $\bar Y$ and $\bar Y'$ denote the fibers over zero), and thus $\pi^*\Omega^{m-n}_{\tilde M_i/S_i}(\log \tilde Y)\cong \Omega^{m-n}_{\bar M_i/\bar S_i}(\log \bar Y)$
and $(\pi')^*\Omega^{m-n}_{\bar M_i/\bar S_i}(\log \bar Y) \cong \Omega^{m-n}_{\bar M'_i/\bar S_i}(\log \bar Y')$.
Furthermore,
\begin{align*}
\bar f'_*\Omega^{m-n}_{\bar M'_i/\bar S_i}(\log \bar Y')
\cong {} &
\bar f_*\pi'_*\Omega^{m-n}_{\bar M'_i/\bar S_i}(\log \bar Y')\\
\cong {} & \bar f_*\pi'_*(\pi')^*\Omega^{m-n}_{\bar M_i/\bar S_i}(\log\bar Y)\\
\cong {} & \bar f_*((\pi'_*\mathcal{O}_{\bar M'_i})\otimes
\Omega^{m-n}_{\bar M_i/\bar S_i}(\log\bar Y))\\
\cong {} & \bar f_*\Omega^{m-n}_{\bar M_i/\bar S_i}(\log \bar Y).
\end{align*}
It also follows from \cite{Steen} (see also \cite{Gr}, Chapter VII) that
\[
F^{m-n}_{\bar U}|_{S_i}
\cong \bar f'_*\Omega^{m-n}_{\bar M'_i/\bar S_i}(\log \bar Y').
\]
Thus, in order to understand the behaviour of $\Omega^{rel}_{\bar U^o}$ as a section of $F^{m-n}_{\bar U}$, it is sufficient to pull back $\Omega^{rel}$ to $\Omega^{m-n}_{\bar M_i/\bar S_i}(\log \bar Y)$ and understand the behaviour of this form as a section of $\bar f_*\Omega^{m-n}_{\bar M_i/\bar S_i} (\log \bar Y)$.

Again, we do this locally near the inverse image of a general point of an irreducible component of $\tilde Y$. Using the same notation as before, we know that $\bar M_i$ is locally given by the normalization of the equation $w_i^{m_i}=z_1^\beta$. Note that $\beta|m_i$, so a local description of the normalization is given by an equation $w_i^{m_i/\beta}=\xi z_1$ for $\xi$ an $\beta$-th root of unity. Thus $\Omega^{rel}$ pulls back to
\[
\psi_i w_i^{\frac{-m_i(\beta-1)}{\beta}}dz_2\wedge\cdots \wedge dz_{m-n+1}= \phi w_i^{\frac{-m_i(\beta-1-k)}{\beta}}dz_2\wedge\cdots \wedge dz_{m-n+1}.
\]

Note that $$w_i^{\frac{-m_i(\beta-1-k)}{\beta}}=y_i^{\frac{1+k}{\beta}-1}.$$
Thus letting $\beta_i$ and $k_i$ be the numbers such that $\frac{1+k_i}{\beta_i}= \min \{\frac{1+k}{\beta}\}$ among irreducible
components of $\tilde Y$ (and let $Y_j$ be one of the components where this minimum is achieved), we find $w_i^{m_i(\beta_i-1-k_i)/\beta_i}\Omega^{rel}_{\bar U^o}$
extends to a holomorphic section of $\Omega^{m-n}_{\bar M_i/\bar S_i}(\log \bar Y)$, hence yields a holomorphic section of $F^{m-n}_{\bar U}|_{\bar S_i}=
\bar f'_*\Omega^{m-n}_{\bar M'_i/\bar S_i}(\log \bar Y')$.

Now set
\[
\Omega^{norm}:= \prod_{i=1}^\ell w_i^{m_i(\beta_i-1-k_i)/\beta_i} \Omega^{rel}_{\bar U^o}.
\]
This now extends to a holomorphic section of $F^{m-n}_{\bar U}$.  By construction, since $|\phi|>0$ at the generic point of $Y_j$, we see that $\Omega^{norm}\neq 0$ at the generic point of $Y_j$. Thus we can write $\Omega^{norm}$, as a section of $\mathcal{H}^{m-n}_{\bar U}$, as
\[
\Omega^{norm}=\sum_{i=1}^s h_i(w)\sigma_i(w),
\]
for $h_i$ holomorphic functions on $\bar U$. We then compute, with $\langle\cdot,\cdot\rangle$ denoting the cup product followed by evaluation on the fundamental class
\[
H^{m-n}(f^{-1}(t_0),\mathbb{C})\times H^{m-n}(f^{-1}(t_0),\mathbb{C})\rightarrow
\mathbb{C},
\]
that
\begin{align*}
&\int_{\bar f^{-1}(w)}\Omega^{norm}\wedge \bar\Omega^{norm}\\
= {} & \left\langle \sum_{i=1}^s h_i(w)\sigma_i(w), \sum_{j=1}^s \bar h_j(w)
\bar \sigma_j(w)\right\rangle\\
= {} &
\left\langle
\sum_{i=1}^se^{-\sum_{k=1}^\ell N_k\log w_k/2\pi\sqrt{-1}} h_ie_i,
\sum_{j=1}^se^{ \sum_{p=1}^\ell N_p\log \bar w_p/2\pi\sqrt{-1}}
\bar h_je_j\right\rangle.
\end{align*}
Note the exponentials can be expanded in a finite power series because the $N_i$ are nilpotent, and hence a term in the above expression is
\[
C\cdot h_i\bar h_j \prod_{k=1}^\ell(\log w_k)^{d_k}(\log \bar w_k)^{d'_k}
\left\langle \prod_{k} N_k^{d_k}e_i, \prod_k N_k^{d'_k}e_j\right\rangle.
\]
Here the constant $C$ only depends on the powers $d_k$, $d'_k$ occuring.
We can assume that we have chosen the imaginary part of $\log w_k$ to lie between $0$ and $2\pi$ (this is equivalent to choosing the branch of $e_i(y)$). Keeping in mind that the $h_j$ are holomorphic on $\bar U$, after shrinking $\bar U$ we can assume that $|h_j|$ are bounded by some constant, and so we see that the above term is bounded by a sum of a finite
number of expressions of the form
\[
C'\prod_{k=1}^\ell (-\log |w_k|)^{d''_k}.
\]
Thus the entire integral is bounded by an expression of the form
\[
C\left(1- \sum_{k=1}^\ell \log |w_k|\right)^d,
\]
for suitable choice of constant $C$ and exponent $d$.

Returning to $\Omega^{rel}_{\bar U^o}$, we see that
\[
(-1)^{\frac{(m-n)^{2}}{2}}\int_{\bar f^{-1}(w)}\Omega^{rel}_{\bar U^o}\wedge \bar\Omega^{rel}_{\bar U^o}
\le C\prod_{i=1}^\ell|w_i|^{-2m_i(\beta_i-1-k_i)/\beta_i}\left((1-\sum_{k=1}^\ell\log |w_k|\right)^d.
\]
Using $y_i=w_i^{m_i}$ then gives one of the inequalities in \eqref{estimate1}, with $\alpha_i=\frac{1+k_i}{\beta_i}$.

To prove the other inequality, we want to  show that $$ (-1)^{\frac{(m-n)^{2}}{2}}\int_{ f^{-1}(y)}\Omega^{rel}\wedge \bar\Omega^{rel}\geq C^{-1}\prod_{i=1}^{\ell}  |y_{i}|^{-2(1-\frac{k_i+1}{\beta_{i}})}$$   for $y\in U\backslash\tilde{D}=(\Delta^{*})^{\ell}\times \Delta^{n- \ell}\subset \ti{N}\backslash E$.
Let $y_{1}=w_{1}^{m_{1}}, \cdots, y_{\ell}=w_{\ell}^{m_{\ell}}, y_{\ell+1}=w_{\ell+1}, \cdots, y_{n}=w_{n}$ be the covering $\mu:\bar{U}^{o} \rightarrow U^{o} $ such that the monodromy operators are unipotent, and let $$\Omega^{norm}=\prod_{i=1}^{\ell} w_{i}^{m_{i}(\beta_i-1-k_i)/\beta_i}\Omega^{rel}_{\bar U^o}. $$  Then our goal is equivalent to proving
 \begin{equation}\label{goalll}
 (-1)^{\frac{(m-n)^{2}}{2}}\int_{\bar f^{-1}(w)}\Omega^{norm}\wedge \bar\Omega^{norm}\geq C^{-1}>0.
 \end{equation}

 Let $Z$ be the zero divisor of $\Omega^{norm}$, which is regarded as a section of  the  line bundle $F^{m-n}_{\bar U}$.  Then $Z\subset \bar{D}=\{w_{1}\cdots w_{\ell}=0\}$ the discriminant locus. To prove \eqref{goalll} it is enough to show that $Z=\emptyset$.

 Assume that $Z\neq \emptyset$.  Note that $Z$ is a divisor, and thus $Z$ contains an  irreducible component of $\bar{D}$, say $\{w_{i}=0\}$.  Let $S_{i}$ be a disc on which all $y_{j}$ are constant nonzero except for $y_{i}$, i.e.  $S_{i}=\{y_{1}=c_{1}, \cdots, \widehat{y_{i}=c_{i}},\cdots,y_{n}=c_{n} \}$,   and $\bar{S}_{i}=\mu^{-1}(S_{i})$. Hence $w_{i}$ is the coordinate on $\bar{S}_{i}$, and  $\bar{S}_{i}\bigcap Z=\{w_{i}=0\}\neq \emptyset$.  Let $\tilde{M}_{i}\rightarrow S_{i}$ and $\bar{M}_{i}\rightarrow \bar{S}_{i}$ be the base changes. From above,  $\tilde{M}_{i}\rightarrow S_{i}$ is normal crossing morphism with a possible nonreduced central fiber $\tilde{Y}$.

As before, let $Y_j$ be one of the components of $\ti{Y}$ where the $\min \{\frac{1+k}{\beta}\}$ is achieved (notation as before).
  Then in local coordinates on $\bar{M}_i$ as before, near the preimage of $Y_j$ in $\bar{M}_i$ under the map  $q:\bar M_i\to M_i$, we have that $\Omega^{rel}_{\bar U^o}$ pulls back to $$\phi  w_i^{\frac{-m_i(\beta-1-k)}{\beta}}dz_2\wedge\cdots \wedge dz_{m-n+1},$$
   where $\phi$ is not identically zero, and so $\Omega^{norm}$ pulls back to $$\phi dz_2\wedge\cdots \wedge dz_{m-n+1},$$
  with $\phi\not\equiv 0$, which is a contradiction because the preimage $q^{-1}(\ti{Y})$ under the map $q:\bar M_i\to M_i$ is mapped into $Z=\{\Omega^{norm}=0\}$ by $\bar{M}_i\to \bar S_i\hookrightarrow\bar{U}$. Hence $Z=\emptyset$.
\end{proof}

Next, we translate the local estimate in Theorem \ref{volest} into a global statement.
To do this, we need to recall the notion of K\"ahler metrics with conical singularities. We write $E=\bigcup_{j=1}^\mu E_j$ for the decomposition of $E$ in irreducible components.
Given real numbers $0<\alpha_i\leq 1$, $1\leq i\leq \mu$, there is a well-defined notion of {\em conical K\"ahler metrics} $\omega_{\rm cone}$ on $\ti{N}$ with singularities along $E$ with cone angle $2\pi\alpha_i$ along each component $E_i$. Any such metric is a smooth K\"ahler metric on $\ti{N}\backslash E$ such that in any local chart $U$ (a unit polydisc with coordinates $(w_1,\dots, w_n)$) centered at a point of $E$ adapted to the normal crossings structure (so $E\cap U$ is given by $w_1\cdots w_k=0$ for some $1\leq k\leq n$, and say that $\{w_i=0\}=E_{j_i}\cap U$ for some $1\leq j_i\leq \mu$ and all $1\leq i\leq k$) we have that on $U\backslash\{w_1\cdots w_k=0\}$ the metric $\omega_{\rm cone}$ is uniformly equivalent to the model
$$\sum_{i=1}^k \frac{\mn dw_i\wedge d\ov{w}_i}{|w_i|^{2(1-\alpha_{j_i})}}+\sum_{i=k+1}^n \mn dw_i\wedge d\ov{w}_i.$$
We also fix a defining section $s_i$ of the divisor $E_i$ and a smooth Hermitian metric $h_i$ on $\mathcal{O}(E_i)$, for all $1\leq i\leq \mu$.
An explicit conical K\"ahler metric $\omega_{\rm cone}$ on $N_0$ with cone angle $2\pi\alpha_i$ along each component $E_i$ is given by
$$\omega_{\rm cone}=\omega_{\ti{N}}+\delta\ddbar\left(\sum_{i=1}^\mu |s_i|_{h_i}^{2\alpha_i}\right),$$
for some small $\delta>0$, where $\omega_{\ti{N}}$ is any given K\"ahler metric on $\ti{N}$.

The following is then the global version of Theorem \ref{volest}, which was stated as \eqref{to22} in the Introduction:
\begin{theorem}\label{main2}
There is a constant $C>0$ and natural numbers $d\in\mathbb{N}$, $0\leq p\leq \mu$ and rational numbers $\beta_i>0,$ $1\leq i\leq p$ and $0<\alpha_i\leq 1$, $p+1\leq i\leq \mu$, such that on $\pi^{-1}(N_0)$ we have
\begin{equation}\label{to2}
C^{-1}\prod_{j=1}^p |s_j|_{h_j}^{2\beta_j}\omega_{\rm cone}^n\leq (\pi^*\omega)^n\leq C\prod_{j=1}^p|s_j|_{h_j}^{2\beta_j}\left(1-\sum_{i=1}^\mu\log|s_i|_{h_i}\right)^d \omega_{\rm cone}^n,
\end{equation}
where $\omega_{\rm cone}$ is a conical metric with cone angle $2\pi\alpha_i$ along the components $E_i$ with $p+1\leq i\leq \mu$.
\end{theorem}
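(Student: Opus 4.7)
The plan is to globalize the local asymptotic estimate of Theorem \ref{volest} to obtain the two-sided bound \eqref{to2}. The first bridge is provided by the Monge-Amp\`ere equation satisfied by $\omega$: since $\omega$ solves \eqref{twisted} on $N_0$ with $\lambda=0$, its construction in \cite{ST,To1,EGZ2,CGZ} gives
\[
\omega^n = c\cdot f_*\bigl((-1)^{m^2/2}\Omega\wedge\ov{\Omega}\bigr)
\]
on $N_0$, for some positive constant $c$. Pulling back by $\pi$, which is a biholomorphism over $N_0$, identifies $(\pi^*\omega)^n$ on $\tilde{N}\setminus E$ with $c\cdot\tilde{f}_*((-1)^{m^2/2}\Omega\wedge\ov{\Omega})$, so the local density $\varphi_U$ controlled by Theorem \ref{volest} is, after comparison with coordinate volume forms, precisely $(\pi^*\omega)^n$ in any adapted chart.

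The second step is to promote the local rational exponents $\alpha_i$ of Theorem \ref{volest} to invariants of the irreducible components of $E$. Reading off the proof, $\alpha_i = (1+k_i)/\beta_i$ is determined by the multiplicities $\beta$ of $\tilde{f}^{-1}(E_i)$ and the orders of vanishing $k$ of $\Omega$ along components of the central fiber over a general point of $E_i$; these combinatorial data are constant on the Zariski-dense subset of $E_i$ where $E_i$ is smooth and $\tilde{f}$ is a normal-crossings morphism, and hence globally well-defined. After reordering, let $E_1,\dots,E_p$ be the components with $\alpha_j>1$, set $\beta_j:=\alpha_j-1>0$, and let $E_{p+1},\dots,E_\mu$ be those with $0<\alpha_i\leq 1$; these latter components carry the cone angles $2\pi\alpha_i$ of $\omega_{\rm cone}$.

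To globalize, I would cover a neighborhood of $E$ in $\tilde{N}$ by finitely many adapted normal-crossings charts with coordinates $(y_1,\dots,y_n)$. In such a chart $|s_j|_{h_j}^2$ is comparable to $|y_j|^2$ up to a smooth positive factor (since $h_j$ is a smooth Hermitian metric on $\mathcal{O}(E_j)$), while $\omega_{\rm cone}^n$ is comparable to $\prod_{i=p+1}^\mu|y_i|^{-2(1-\alpha_i)}$ times the Euclidean volume form. Substituting these equivalences into \eqref{estimate1}, and collecting the factors with $\alpha_j>1$ as $|s_j|_{h_j}^{2\beta_j}$, yields \eqref{to2} on each chart. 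Away from $E$ both sides of \eqref{to2} are smooth and bounded above and below by positive constants on compact subsets of $\tilde{N}\setminus E$, so a finite covering argument combined with the compactness of $\tilde{N}$ delivers the global estimate.

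The main obstacle is the intrinsicness claim in the second step: one must verify that the minimum $\min\{(1+k)/\beta\}$, taken over irreducible components of the fiber of $\tilde{f}$ over a generic point of $E_i$, is constant along that generic locus, and that the covering multiplicities $m_i$ entering the proof of Theorem \ref{volest} depend only on the component $E_i$. This can be checked via lower semicontinuity of multiplicities together with the connectedness of the smooth locus of $E_i$ inside $\tilde{N}$, although the bookkeeping is delicate because distinct local normal-crossings charts may a priori produce different decompositions of the central fiber.
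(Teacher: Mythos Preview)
Your approach is essentially the same as the paper's: translate the local estimate \eqref{estimate1} into global language via the Monge--Amp\`ere equation \eqref{eq3}, separate the exponents $\alpha_i>1$ (giving the $\beta_j$) from those with $0<\alpha_i\leq 1$ (giving the cone angles), and cover $\tilde N$ by finitely many adapted charts.

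The only substantive difference is in how you address what you call the ``main obstacle,'' namely showing that the rational exponent attached to $E_i$ is intrinsic to the component. You propose an algebro-geometric argument based on constancy of the combinatorial data (multiplicities $\beta$, vanishing orders $k$) along the smooth locus of $E_i$, and you correctly flag that this bookkeeping is delicate. The paper bypasses this entirely with a simpler analytic observation: on the overlap of two adapted charts one is comparing two local expressions for the \emph{same} global function $(\pi^*\omega)^n/\omega_{\tilde N}^n$, and the two-sided bound \eqref{estimate1} pins down the blow-up exponent along $\{y_1=0\}$ (approaching a generic point of $E_1$, away from the other components) up to logarithmic corrections. Hence the exponents in the two charts must agree. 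This argument uses only the matching of upper and lower bounds in Theorem~\ref{volest} and avoids any discussion of how the decomposition of the central fiber varies along $E_i$. Your route would work in principle, but the paper's is both shorter and more robust.
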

\begin{proof}
This makes use of the above results from Hodge theory.
First of all, as explained for example in \cite[Section 4]{To1}, on $N_0$ the metric $\omega$ satisfies \begin{equation}\label{eq3} \omega^{n}=cf_{*}((-1)^{\frac{m^{2}}{2}}\Omega \wedge \overline{\Omega}) \end{equation}
for some explicit constant $c>0$. Recall that at the beginning of this Section we have constructed a sequence of blowups $\pi:\ti{N}\to N$ such that $E=\pi^{-1}(D)$ is an snc divisor, and then we have the base-change $M\times_N \ti{N}\to M$ and its resolution $\ti{M}\to M\times_N \ti{N}$ which, when composed, gives a holomorphic map $p:\ti{M}\to M$. We also have the new fibration $\ti{f}:\ti{M}\to\ti{N}$ so that we have the commutative diagram
\[
\xymatrix@C=30pt
{  \ti{M}\ar[d]_{\ti{f}}
\ar[r]^{p}&M\ar[d]^{f}\\
\ti{N}\ar[r]_{\pi}&N}
\]
For clarity, call $\ti{\Omega}=p^*\Omega$, which is a holomorphic $n$-form on $\ti{M}$ which in general has zeros. Then on $\ti{N}\backslash E$ we actually have equality
\begin{equation}\label{eq4}
\pi^*f_*(\Omega\wedge\ov{\Omega})=\ti{f}_*(\ti{\Omega}\wedge\ov{\ti{\Omega}}),
\end{equation}
since $p,\pi$ are biholomorphisms when we are away from the singular fibers. Combining \eqref{eq3} and \eqref{eq4} we get
$$(\pi^*\omega)^n=c\ti{f}_*((-1)^{\frac{m^{2}}{2}}\ti{\Omega}\wedge\ov{\ti{\Omega}})$$
on $\ti{N}\backslash E$. Now Theorem \ref{volest} gives us an estimate for the behavior of the RHS locally, when compared with a Euclidean volume form, near any given point of $E$, of the form
\[\begin{split}
C^{-1}\prod_{i=1}^\ell|y_i|^{-2\left(1-\alpha_i\right)} &\leq\frac{c\ti{f}_*((-1)^{\frac{m^{2}}{2}}\ti{\Omega}\wedge\ov{\ti{\Omega}})}{idy_1\wedge d\ov{y}_1\wedge\dots\wedge idy_n\wedge d\ov{y}_n}\\
&\le C\prod_{i=1}^\ell|y_i|^{-2\left(1-\alpha_i\right)} \left(1-\sum_{i=1}^\ell\log |y_i|\right)^d,
\end{split}\]
where $\alpha_i\in\mathbb{Q}_{>0}$. We keep those $\alpha_i$ which satisfy $\alpha_i\leq 1$, and for the others we define $\beta_j=-1+\alpha_j\in\mathbb{Q}_{>0}$ so that we can write (with abuse of notation)
$$\prod_{k=1}^\ell|y_k|^{-2\left(1-\alpha_k\right)}=\prod_{i}|y_i|^{-2\left(1-\alpha_i\right)}\prod_{j}|y_j|^{2\beta_j},$$
and so that this shows that \eqref{to2} holds locally,
in the sense that given any point $x\in E$ we can find a neighborhood $U$ of $x$ in $\ti{N}$ together with $\ell,C,d,p,\alpha_i,\beta_i$ (which all depend on $U$), such that
\begin{equation}\label{estimate1a}
\frac{C^{-1}\prod_{j}|y_j|^{2\beta_j}\omega_{\ti{N}}^n}{\prod_{i}|y_i|^{2\left(1-\alpha_i\right)}}\leq (\pi^*\omega)^n\leq \left(1-\sum_{i=1}^\ell\log|y_i|\right)^d \frac{C\prod_{j}|y_j|^{2\beta_j}\omega_{\ti{N}}^n}{\prod_{i}|y_i|^{2\left(1-\alpha_i\right)}},
\end{equation}
holds on $U$. Fix attention on one irreducible component $E_1$ of $E$, and cover $E_1$ by such open sets $U_j$, and say that on each $U_j$ we have $E_1\cap U_j=\{y_1=0\}$. We wish to show that the number $\alpha_1$ (which give the cone angle along $E_1$ if $\alpha_1\leq 1$) or $\beta_1$ (which give the order of zero along $E_1$ if $\alpha_1>1$) that we get is the same an all these $U_j$'s. On a nonempty intersection of two such $U_j$'s, we can look at points approaching $E_1$ in this intersection, which do not come close to any of the other components of $E$. Then the blowup rate (or order of vanishing) of the function in \eqref{estimate1a} along $\{y_1=0\}$ must be the same in both sets, which thanks to \eqref{estimate1a} implies that the corresponding numbers $\alpha_1$ (or $\beta_1$) are equal. Repeating this for all components of $E$, this proves \eqref{to2}.
\end{proof}

\section{The codimension $2$ part of the discriminant}\label{codim2}
Still working in the Calabi-Yau setting, our next goal is to show that the codimension $2$ (or more) subset $D^{(2)}\cap N_0$ does not contribute to the singularities of $\omega$ on $N_0$. For this, we fix a K\"ahler metric $\omega_N$ on $N$ (in the sense of analytic spaces \cite{Moi} if $N$ is not smooth), and we define a smooth positive function $\mathcal{F}$ defined on $N_0$  by
\begin{equation}\label{deff}\frac{(-1)^{\frac{m^2}{2}}f_*(\Omega\wedge\ov{\Omega})}{\omega_N^n}=\mathcal{F},
\end{equation}
which satisfies
$$\ddbar\log \mathcal{F}=-\omega_{\rm WP}+\Ric(\omega_N),$$
while the metric $\omega$ on $N_0$ satisfies
\begin{equation}\label{asino}
\omega^n=c\mathcal{F}\omega_N^n,
\end{equation}
for some constant $c>0$.

Our next observation is that the components of $D$ of codimension $2$ or higher do not contribute to the singularities of $\mathcal{F}$. More precisely, write $N'=N^{\rm reg}\backslash D^{(1)}$ and $D'=D^{(2)}\cap N'$, so $D'$ is codimension $2$ or more inside the complex manifold $N'$, and $N'\backslash D'=N_0$.

\begin{proposition}\label{cod2}
The function $\mathcal{F}$ on $N_0$ extends to a smooth positive function on all of $N'$, and the current $\omega$ on $N$ is a smooth K\"ahler metric on $N'$.
\end{proposition}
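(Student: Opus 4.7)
The strategy is to argue locally. Fix $x\in D'=D^{(2)}\cap N^{\rm reg}\setminus D^{(1)}$ and a small coordinate polydisc $V\subset N^{\rm reg}$ centered at $x$ with $V\cap D^{(1)}=\emptyset$; then $V\cap D=V\cap D'$ is an analytic subvariety of complex codimension at least $2$ in the smooth ball $V$, and $V\setminus D'\subset N_0$. Both assertions follow from: (a) extending $\mathcal F$ smoothly and positively across $D'$ on $V$, and (b) deducing smoothness of $\vp$ on $V$ from pluripotential theory together with complex Monge--Amp\`ere regularity.

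For (a), on the smooth family $f\colon f^{-1}(V\setminus D')\to V\setminus D'$ of Calabi--Yau $(m-n)$-folds, the relative form $\Omega^{rel}=\iota(\de_{y_1},\dots,\de_{y_n})\Omega$ is a nowhere-vanishing holomorphic section of the Hodge line bundle $F^{m-n}\cong f_*\omega_{f^{-1}(V\setminus D')/(V\setminus D')}$, and on $V\setminus D'$
\[
f_*\bigl((-1)^{m^2/2}\Omega\wedge\ov\Omega\bigr)=\|\Omega^{rel}\|^2_{F^{m-n}}\,(\mn dy_1\wedge d\ov{y}_1)\wedge\cdots\wedge(\mn dy_n\wedge d\ov{y}_n),
\]
where the norm is the Hodge $L^2$ metric. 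Because $V\cap D'$ has codimension at least $2$ in the simply connected ball $V$, the polarized variation of Hodge structure $R^{m-n}f_*\mathbb C$ on $V\setminus D'$ has trivial monodromy; by Schmid's nilpotent orbit theorem the Deligne canonical extension is simply the trivial one, so the Hodge bundle $F^{m-n}$ and its $L^2$ metric extend smoothly across $D'\cap V$, while Hartogs' theorem for reflexive coherent sheaves on a smooth variety extends $\Omega^{rel}$ to a nowhere-vanishing holomorphic section on $V$. Therefore $\|\Omega^{rel}\|^2_{F^{m-n}}$ is smooth and strictly positive on $V$, giving smooth positive extensions of $f_*(\Omega\wedge\ov\Omega)$ and hence of $\mathcal F=f_*(\Omega\wedge\ov\Omega)/\omega_N^n$.

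For (b), $\vp$ is continuous and $\omega_N$-psh on $V$, smooth on $V\setminus D'$, and satisfies $(\omega_N+\ddbar\vp)^n=c\mathcal F\omega_N^n$ on $V\setminus D'$ by \eqref{asino}. The Bedford--Taylor Monge--Amp\`ere measure $(\omega_N+\ddbar\vp)^n$ is well defined on $V$, and by the classical Bedford--Taylor--Demailly result it assigns zero mass to the pluripolar analytic subset $D'\cap V$. Hence the equation $(\omega_N+\ddbar\vp)^n=c\mathcal F\omega_N^n$ holds on all of $V$ as an equality of Borel measures, now with smooth strictly positive right-hand side by (a). Standard interior regularity for the complex Monge--Amp\`ere equation with continuous solution and smooth positive right-hand side (e.g.\ Ko\l odziej's $C^0$ estimate followed by Evans--Krylov and Schauder bootstrapping) then gives $\vp\in C^\infty(V)$. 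Consequently $\omega=\omega_N+\ddbar\vp$ is a smooth $(1,1)$-form on $V$ whose $n$-th exterior power $c\mathcal F\omega_N^n$ is strictly positive, so $\omega$ is a smooth K\"ahler metric on $V$.

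The main technical obstacle is step (a): extending the Hodge-theoretic data smoothly across $D'\cap V$. This relies essentially on both the vanishing of monodromy around codimension $\geq 2$ loci in a simply connected smooth base (so that the nilpotent orbits in Schmid's theorem are all trivial and no logarithmic terms appear) and Hartogs-type extensions for reflexive coherent sheaves on smooth varieties. Once (a) is established, step (b) is a routine application of pluripotential theory and Monge--Amp\`ere interior regularity.
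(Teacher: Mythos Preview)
Your argument for (a) is in the same spirit as the paper's: both extend the Hodge-theoretic data across the codimension-$\geq 2$ set $D'$ and deduce the smooth positive extension of $\mathcal{F}$. The paper does this by extending the period map via the Griffiths--Schmid result \cite{GS} (using negative horizontal holomorphic sectional curvature of the period domain), so that $\omega_{\rm WP}=\mathcal{P}^*\omega_H$ extends smoothly; then $\log\mathcal{F}$ differs from a smooth local potential by a pluriharmonic function, which is the real part of a holomorphic function extending by Hartogs. Your packaging via the Hodge line bundle and the $L^2$ metric is equivalent, but your citation is off: Schmid's nilpotent orbit theorem concerns asymptotics near an snc \emph{divisor}, whereas here $D'$ has codimension $\geq 2$ and there is no divisorial boundary at all. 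The correct input is precisely the Griffiths--Schmid extension of horizontal holomorphic maps across codimension-$\geq 2$ analytic sets (equivalently, triviality of monodromy on the simply connected $V\setminus D'$ plus Riemann/Hartogs extension for the period map into $\mathcal{D}$). Once the period map extends, your claim that the $L^2$ metric extends smoothly and $\Omega^{rel}$ extends without zeros (its zero divisor would have to lie in $D'$, which is impossible) is correct.

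For (b) your route diverges from the paper, which simply appeals to the global approximation construction of $\omega$ in \cite[\S 3.2]{ST}. Your idea of extending the Monge--Amp\`ere equation across $D'$ by Bedford--Taylor and then invoking regularity is fine in principle, but the parenthetical ``Ko\l odziej's $C^0$ estimate followed by Evans--Krylov'' does not work as written: you already have $C^0$, and Evans--Krylov needs $C^{1,1}$ input, while purely interior $C^{1,1}$ regularity for continuous psh solutions of complex Monge--Amp\`ere is not a standard black box. A clean fix, exploiting that $\vp$ is already smooth on $V\setminus D'$, is: pick a ball $B\Subset V$ with $\partial B\cap D'=\emptyset$, so $\vp|_{\partial B}$ is smooth; solve the Dirichlet problem $(\omega_N+\ddbar\psi)^n=c\mathcal{F}\omega_N^n$ on $B$ with smooth boundary data $\vp|_{\partial B}$ (Caffarelli--Kohn--Nirenberg--Spruck), obtaining $\psi\in C^\infty(\ov{B})$; then the comparison principle for bounded psh solutions forces $\psi=\vp$ on $B$, hence $\vp\in C^\infty(B)$. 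This makes your alternative to the paper's approximation argument complete.
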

\begin{proof}
For this we use the well-known relation between the Weil-Petersson form and the Hodge metric on the Hodge bundle, that we now recall. The reader is referred to e.g. \cite{Gr} for the relevant background. For $y\in N_0$ denote by $M_y=f^{-1}(y)$ the fiber over $y$ and let $P\subset H^{n-m}(M_y,\mathbb{C})=:H$ be the primitive cohomology given by the K\"ahler class $[\omega_M|_{M_y}]$, $H_\mathbb{Z}:=P\cap H^{n-m}(M_y,\mathbb{Z})$, $k^{p,q}:=\dim_{\mathbb{C}} P\cap H^q(M_y,\Omega^p_{M_y}) (p+q=n-m)$, and let $Q$ be the polarization form on $H$. Let $\mathcal{D}$ be the classifying space for integral polarized Hodge structures of type $\{H_{\mathbb{Z}}, k^{p,q}, Q\}$, and
let
$$\mathcal{P}:N_0\to\mathcal{D}/\Gamma,$$
be the corresponding period map, where $\Gamma$ is the monodromy group. This map is holomorphic, locally liftable and horizontal.
Since $\mathcal{D}$ has a Hermitian metric with negative holomorphic sectional curvature in the horizontal directions \cite{GS}, a result of Griffiths-Schmid \cite[Corollary 9.8]{GS} shows that $\mathcal{P}$ admits a holomorphic extension $\ov{\mathcal{P}}:N'\to \mathcal{D}/\Gamma$ across the analytic set $D'$ of codimension $2$ or more.

In our situation, the form $\omega_{\rm WP}$ on $N_0$ is in fact equal to the pullback $\mathcal{P}^*\omega_{\rm H}$ of the Hodge K\"ahler metric on $\mathcal{D}/\Gamma$ by \cite{Ti}. It follows therefore that $\omega_{\rm WP}$ admits a smooth extension $\ov{\omega}_{\rm WP}$ to $N'$, which is equal to $\ov{\mathcal{P}}^*\omega_{\rm H}\geq 0$.

Given any $x\in D'$, there is an open subset $x\in U\subset N'$, $U$ biholomorphic to a ball, where we can write
$$-\ov{\omega}_{\rm WP}+\Ric(\omega_N)=\ddbar u,$$ for some smooth function $u$. Note that $U\cap N_0$ is simply connected. On the other hand, if on $N_0$ we write $v=\log \mathcal{F}$, then the difference $w=u-v$ is pluriharmonic on $U\cap N_0$. In particular, $\de w$ is $d$-closed on $U\cap N_0$, and since $H^1(U\cap N_0,\mathbb{C})=0$, it is $d$-exact, i.e. $\de w=dw'$ for some complex function $w'$ on $U\cap N_0$. Then $\db w'=0$ so $w'$ is holomorphic, and also $d(w'+\ov{w'}-w)=0$ so $w'+\ov{w'}-w=c$ on $U\cap N_0$. Therefore $w$ equals the real part of the holomorphic function $h=2w'-c$ on $U\cap N_0$. By Hartogs, $h$ extends to a holomorphic function on $U$, and so $w$ (and hence also $v$) extends smoothly on $U$. This shows the desired extension property of $\mathcal{F}$.

Lastly, the fact that $\omega$ is a smooth K\"ahler metric on $N'$ follows from the way in which $\omega$ (solving \eqref{asino}) is constructed in \cite[\S 3.2]{ST} via an approximation procedure, after we know that $\mathcal{F}$ is smooth and positive on $N'$.
\end{proof}

It is interesting to point out the following extension of the results in \cite{TZ0}:
\begin{proposition}\label{extens}
Let $M$ be a projective manifold with $K_M\cong\mathcal{O}_M$, $N$ a compact K\"ahler manifold and $f:M\to N$ a surjective holomorphic map with connected fibers. If the locus $D\subset N$ of critical values of $f$ has codimension $2$ or more, then $f|_{M\backslash f^{-1}(D)}:M\backslash f^{-1}(D)\to N\backslash D$ must be a holomorphic fiber bundle (i.e. $f$ is isotrivial).
\end{proposition}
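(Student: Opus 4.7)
The approach is to reduce the statement to the case $D=\emptyset$ treated in \cite{TZ0}, by first extending the twisted K\"ahler-Einstein metric $\omega$ and the Weil-Petersson form $\omega_{\rm WP}$ smoothly across the codimension-$\geq 2$ critical locus $D\subset N$. Although Proposition \ref{cod2} is stated for $N$ a normal projective variety, its proof is purely local around $D$ and uses only that $N$ is smooth near $D$ together with the Griffiths-Schmid extension theorem for the period map across an analytic subset of codimension $\geq 2$; both hypotheses are met in our setting, where $N$ is a smooth compact K\"ahler manifold. Consequently $\omega_{\rm WP}$ extends to a smooth semipositive $(1,1)$-form $\overline{\omega}_{\rm WP}$ on all of $N$, and the Song-Tian metric $\omega$ extends to a smooth K\"ahler metric on $N$ still satisfying $\Ric(\omega)=\overline{\omega}_{\rm WP}$ globally.

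Once this extension is performed we are in the exact situation handled in \cite{TZ0}: a smooth Song-Tian K\"ahler-Einstein metric on a compact K\"ahler manifold $N$, associated to a Calabi-Yau fibration with $K_M\cong\mathcal{O}_M$. The argument of \cite{TZ0} then gives $\overline{\omega}_{\rm WP}\equiv 0$ on $N$, and in particular $\omega_{\rm WP}\equiv 0$ on $N_0$. Writing $\omega_{\rm WP}=\mathcal{P}^*\omega_{\rm H}$, where $\mathcal{P}:N_0\to\mathcal{D}/\Gamma$ is the period map and $\omega_{\rm H}$ is the Hodge metric on the classifying space---which by \cite{GS} is positive definite on Griffiths-horizontal tangent vectors---and using the horizontality of $\mathcal{P}$ (Griffiths transversality), the vanishing $\mathcal{P}^*\omega_{\rm H}\equiv 0$ forces $d\mathcal{P}\equiv 0$ on $N_0$. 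Since $N_0$ is connected (as $D$ has codimension $\geq 2$ in the connected manifold $N$), the period map $\mathcal{P}$ is constant.

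Finally, by the local Torelli theorem for Calabi-Yau manifolds, $d\mathcal{P}$ agrees (under the canonical identification) with the Kodaira-Spencer map of the smooth family $f|_{M\setminus f^{-1}(D)}:M\setminus f^{-1}(D)\to N_0$, so the latter vanishes identically on $N_0$. The Fischer-Grauert theorem then guarantees that this proper holomorphic submersion is locally biholomorphically trivial, i.e., a holomorphic fiber bundle over $N_0$, which is precisely the isotriviality asserted. The principal obstacle in this plan is the vanishing $\overline{\omega}_{\rm WP}\equiv 0$, which we import from \cite{TZ0}; the new contribution here is the smooth extension of $\omega$ and $\omega_{\rm WP}$ across $D$ provided by Proposition \ref{cod2}, which is what makes that earlier argument applicable.
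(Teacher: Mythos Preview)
Your proof is correct and shares the paper's overall strategy: extend $\omega_{\rm WP}$ and the period map across the codimension-$\geq 2$ locus $D$, obtain a K\"ahler metric on $N$ with $\Ric\geq 0$, apply the Schwarz lemma from \cite{TZ0} to conclude the extended period map $\overline{\mathcal{P}}$ is constant, and deduce isotriviality via local Torelli and Fischer--Grauert. The difference lies in how the $\Ric\geq 0$ metric is produced. You invoke the full strength of Proposition~\ref{cod2} to extend the Song--Tian metric $\omega$ itself smoothly across $D$ and use $\Ric(\omega)=\overline{\omega}_{\rm WP}\geq 0$ directly. The paper instead avoids $\omega$ entirely: it first establishes $-K_N\cong f_*(K_{M/N})$, observes that the fiberwise Hodge metric on this line bundle has curvature $\omega_{\rm WP}$ on $N\backslash D$, then uses the Grauert--Remmert extension theorem together with the Federer support theorem to show $c_1(-K_N)$ admits a smooth semipositive representative on $N$, and finally invokes Yau's theorem to obtain the desired metric. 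Your route is shorter; the paper's route is independent of the Song--Tian construction and makes the semipositivity of $-K_N$ explicit as a byproduct. One minor redundancy in your write-up: the Schwarz lemma of \cite{TZ0} applied to $\overline{\mathcal{P}}:N\to\mathcal{D}/\Gamma$ already yields $\overline{\mathcal{P}}$ constant directly (this is how the paper states it), so your subsequent re-derivation of $d\mathcal{P}=0$ from $\omega_{\rm WP}=0$ is unnecessary.
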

The result in \cite{TZ0} (see also \cite[Theorem 3.3]{TZ2}) is the same theorem under the stronger assumption that $D=\emptyset$. On the other hand, this result is of course false if $D$ is allowed to have components of codimension $1$.
\begin{proof}
Since $f$ has connected fibers and $N$ is smooth we have $f_*\mathcal{O}_M\cong\mathcal{O}_N$. Combining this with $K_M\cong\mathcal{O}_M$ and with the projection formula we get that
$$\mathcal{O}_N\cong f_*(K_{M/N})+ K_N,$$
i.e.
$$-K_N\cong f_*(K_{M/N}).$$
In particular the pushforward $f_*(K_{M/N})$ is indeed a line bundle on $N$. On $N\backslash D$ we have a smooth Hermitian metric on this line bundle given locally by
$$(-1)^{\frac{(m-n)^2}{2}}\int_{M_y}\Omega_y\wedge\ov{\Omega_y},$$
(where $M_y=f^{-1}(y)$)
whose curvature form on $N\backslash D$ is exactly the Weil-Petersson form $\omega_{\rm WP}\geq 0$. From the proof of Proposition \ref{cod2} we know that $\omega_{\rm WP}$ admits a smooth extension $\ov{\omega}_{\rm WP}$ to $N$.
On the other hand, there is another smooth metric on $-K_N$ whose curvature form on $N$ is $\Ric(\omega_N)$, and so there exists a smooth function $u$ on $N\backslash D$ such that
$$\Ric(\omega_N)+\ddbar u =\omega_{\rm WP}.$$
Since $D\subset N$ has codimension $2$ or more, the Grauert-Remmert extension theorem \cite{GR} shows that $u$ extends to a quasi-psh function $\underline{u}$ on $N$ which satisfies
$\Ric(\omega_N)+\ddbar \underline{u}\geq 0$ weakly as currents. The difference $\ov{\omega}_{\rm WP}-(\Ric(\omega_N)+\ddbar \underline{u})$ is a closed real $(1,1)$ current which is the difference of two positive currents, and which is supported on the analytic set $D$ of codimension at least $2$, hence by the Federer support theorem this current vanishes.
It follows that $\ddbar\underline{u}$ is smooth on all of $N$, and hence so is $\underline{u}$ by regularity of the Laplacian. This means that $\Ric(\omega_N)+\ddbar \underline{u}\geq 0$ is a smooth semipositive form on $N$ cohomologous to $c_1(-K_N)$, and so by Yau's Theorem \cite{Ya} the manifold $N$ admits a K\"ahler metric with nonnegative Ricci curvature.

As in \cite{TZ0}, we can apply the Schwarz Lemma \cite[Lemma 3.3]{TZ0} to the map $\ov{\mathcal{P}}:N\to \mathcal{D}/\Gamma$, to conclude that $\ov{\mathcal{P}}$ is constant on $N$. Therefore $\mathcal{P}$ is constant on $N\backslash D$, and as in \cite[Proof of Theorem 3.1]{TZ0} this implies that the map $f|_{M\backslash f^{-1}(D)}$ is a holomorphic fiber bundle.
\end{proof}

\section{The case when the discriminant locus is an snc divisor}\label{sectsnc}
In this section we prove the estimate \eqref{quasiisom}, under the assumption that $N$ is smooth and $D^{(1)}$ is a simple normal crossings divisor.

We start with a preliminary observation related to the volume estimate in Theorem \ref{main2}, in the general setting of Theorem \ref{prin1}.
Recall that we have constructed a resolution $\pi:\ti{N}\to N$ of $N$ such that $\pi^{-1}(D)=E$ is a simple normal crossings divisor, $E=\cup_{i=1}^\mu E_i$, and we have fixed defining sections $s_i$ and metrics $h_i$ for $\mathcal{O}(E_i)$.
Let $\ti{D}\subset \ti{N}$ be the proper transform of $D^{(1)}$, and write
$$E=F\cup\ti{D},$$
where $F$ is a $\pi$-exceptional divisor.

We fix a K\"ahler metric $\omega_{\ti{N}}$ on $\ti{N}$ and a K\"ahler metric $\omega_N$ on $N$ (in the sense of analytic spaces \cite{Moi} if $N$ is not smooth), and define a smooth nonnegative function $J_\pi$ on $\ti{N}$ by
\begin{equation}\label{jac}
\pi^*\omega_N^n = J_\pi \omega_{\ti{N}}^n,
\end{equation}
which is clearly positive on $\pi^{-1}(N_0)$.

Going back to the volume form estimate \eqref{to2}, we wish to understand which of the components of $F$ or $\ti{D}$ actually have zeros (i.e. $\beta_j>0$) or conical singularities (i.e. $\alpha_i<1$). The first  observation is the following:

\begin{lemma}\label{moron}
The function $\mathcal{F}$ on $N_0$ defined in \eqref{deff} satisfies
\begin{equation}\label{lower}
\mathcal{F}\geq C^{-1}.
\end{equation}
\end{lemma}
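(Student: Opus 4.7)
My plan is to interpret $-\log\mathcal{F}$ as, up to a smooth correction, a locally plurisubharmonic weight, and deduce the desired lower bound from the upper semicontinuity of such weights, using Theorem \ref{main2} only to certify an $L^1$ condition.

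First, from $\omega^n=c\mathcal{F}\omega_N^n$ and the twisted K\"ahler-Einstein equation $\Ric(\omega)=\omega_{\rm WP}\ge 0$, one computes
\[
\ddbar\log\mathcal{F}=\Ric(\omega_N)-\omega_{\rm WP}
\]
on $N_0$. On any sufficiently small open ball $U\subset N^{\rm reg}$, choose a smooth function $u$ solving $\ddbar u=\Ric(\omega_N)|_U$; then $\psi:=u-\log\mathcal{F}$ satisfies $\ddbar\psi=\omega_{\rm WP}\ge 0$, so $\psi$ is plurisubharmonic on $U\cap N_0$.

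Second, I would use Theorem \ref{main2} to show $\log\mathcal{F}\in L^1_{\rm loc}(N^{\rm reg})$, hence $\psi\in L^1_{\rm loc}(U)$. Rewriting $(\pi^*\omega)^n=c(\pi^*\mathcal{F})(\pi^*\omega_N)^n$ with the two-sided estimates \eqref{to2}, together with the local snc model for $\omega_{\rm cone}$ and the smoothness and boundedness of the Jacobian $J_\pi$, yields pointwise upper and lower bounds for $\log\pi^*\mathcal{F}$ by locally $L^1$ expressions (linear combinations of $\log|s_i|_{h_i}$ with smooth functions); descending along $\pi$ gives the claimed local integrability of $\log\mathcal{F}$.

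By the standard $L^1$ extension theorem for plurisubharmonic functions across analytic (hence pluripolar) subsets, $\psi$ then extends uniquely to a psh function on $U$. As a psh function on a ball, it is upper semicontinuous and hence locally bounded above, so on any $U'\Subset U$ one obtains $-\log\mathcal{F}\le -u+\sup_{\ov{U'}}\psi$ on $U'\cap N_0$, i.e., a positive lower bound for $\mathcal{F}$ on $U'\cap N_0$. Covering $N^{\rm reg}$ by finitely many such balls, and running the analogous argument on the smooth resolution $\ti{N}$ to handle neighborhoods of $N^{\rm sing}$ (using $\pi^{-1}(N_0)=\ti{N}\setminus E$ and that the same $L^1$-extension reasoning applies to $\pi^*\mathcal{F}$), produces the uniform bound $\mathcal{F}\ge C^{-1}$ on all of $N_0$.

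The main obstacle I anticipate is the extension step on $\ti N$, where the set to extend across is the divisor $E$, hence pluripolar but only of codimension one. One has to carefully verify the $L^1_{\rm loc}$ integrability of the relevant psh weight near $E$ using both directions of Theorem \ref{main2} together with the local behavior of $J_\pi$ and of $\omega_{\rm cone}$ along exceptional and non-exceptional components; keeping the logarithmic factors in \eqref{to2} under control is what makes this step tractable.
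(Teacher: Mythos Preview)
Your approach is quite different from the paper's and contains a genuine gap. The paper proves the lemma by a direct pointwise argument on $M$: one has $f^*\mathcal{F}=\dfrac{(-1)^{m^2/2}\Omega\wedge\ov\Omega}{f^*\omega_N^n\wedge\omega_{\rm SRF}^{m-n}}$ on $M\backslash S$, where $\omega_{\rm SRF}$ is a semi-Ricci-flat form (Song--Tian). For each $y\in N_0$ the fiberwise volume forms $(\omega_{\rm SRF}|_{M_y})^{m-n}$ and $(\omega_M|_{M_y})^{m-n}$ have equal integral, so they agree at some point $x_y\in M_y$; evaluating at $x_y$ then expresses $\mathcal{F}(y)$ as a ratio of two globally smooth positive volume forms on $M$, which is uniformly bounded below. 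No potential theory and no input from Theorem~\ref{main2} is needed.

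The gap in your argument is the extension step. There is no ``$L^1$ extension theorem'' for psh functions across analytic sets of codimension one: the function $-\log|z_1|^2$ on $\mathbb{C}^n\setminus\{z_1=0\}$ is pluriharmonic (hence psh) and lies in $L^1_{\rm loc}(\mathbb{C}^n)$, yet $\ddbar(-\log|z_1|^2)=-2\pi[z_1=0]$ as a current, so it does not extend to a psh function on $\mathbb{C}^n$. The correct extension theorem (Grauert--Remmert, used elsewhere in the paper) requires the psh function to be locally bounded \emph{above} near the divisor. For your $\psi=u-\log\mathcal{F}$ this upper bound is exactly $\mathcal{F}\geq C^{-1}$, i.e.\ the statement you are trying to prove, so the argument is circular along the divisorial part $D^{(1)}$. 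Your use of Theorem~\ref{main2} to get $\log\mathcal{F}\in L^1_{\rm loc}$ is fine but does not close this gap: the two-sided bound \eqref{to2} does not by itself rule out that some $\beta_j>0$ occurs along a non-exceptional component of $E$ (indeed, the paper deduces that fact \emph{from} Lemma~\ref{moron} via \eqref{asino2}), so you cannot extract the needed upper bound for $-\log\mathcal{F}$ from Theorem~\ref{main2} alone.
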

\begin{proof}
This is a consequence of results of Song-Tian \cite{ST}, as follows. First, as shown in \cite[Lemma 3.3]{ST}, on $M\backslash S$ we can write
\begin{equation}\label{srf}
f^*\mathcal{F}=\frac{(-1)^{\frac{m^2}{2}}\Omega\wedge\ov{\Omega}}{f^*\omega_N^n\wedge\omega_{\rm SRF}^{m-n}},
\end{equation}
where $\omega_{\rm SRF}$ is a {\em semi-Ricci-flat form} on $M\backslash S$ defined by $\omega_{\rm SRF}=\omega_M+\ddbar\rho$, where $\omega_M$ is any given K\"ahler metric on $M$ with fiber volume $\int_{M_y}\omega_M^{m-n}=1$ for all $y\in N_0$, and $\rho$ is the unique smooth function on $M\backslash S$ such that
$$\omega_{\rm SRF}|_{M_y}=\omega_M|_{M_y}+\ddbar\rho|_{M_y},$$
is the unique Ricci-flat K\"ahler metric on $M_y$ cohomologous to $\omega_M|_{M_y}$ (for all $y\in N_0$), with the normalization that
$\int_{M_y}\rho|_{M_y}\omega_M|_{M_y}^{m-n}=0,$ which is obtained by a fiberwise application of Yau's Theorem \cite{Ya}.

We can then argue in a similar way as the proof of \cite[Proposition 3.2]{ST}. Explicitly, given any $y\in N_0$, on the fiber $M_y$ the volume forms $(\omega_M|_{M_y})^{m-n}$ and $(\omega_{\rm SRF}|_{M_y})^{m-n}$ have the same total volume, and hence there is a point $x_y\in M_y$ such that
$$\frac{(\omega_M|_{M_y})^{m-n}}{(\omega_{\rm SRF}|_{M_y})^{m-n}}(x_y)=1.$$
Then by \eqref{srf} we have
$$\mathcal{F}(y)=\frac{(-1)^{\frac{m^2}{2}}\Omega\wedge\ov{\Omega}}{f^*\omega_N^n\wedge\omega_{\rm SRF}^{m-n}}(x_y)=\frac{(-1)^{\frac{m^2}{2}}\Omega\wedge\ov{\Omega}}{f^*\omega_N^n\wedge\omega_{M}^{m-n}}(x_y)\geq C^{-1},$$
since on $M$ we have $f^*\omega_N^n\wedge\omega_{M}^{m-n}\leq C\omega_M^n$ and $(-1)^{\frac{m^2}{2}}\Omega\wedge\ov{\Omega}\geq C^{-1}\omega_M^n$.
\end{proof}
Pulling back \eqref{asino} via $\pi$, we can use Lemma \ref{moron} to get on $\pi^{-1}(N_0)$
$$(\pi^*\omega)^n=c(\mathcal{F}\circ\pi) \pi^*\omega_N^n=c(\mathcal{F}\circ\pi) J_\pi \omega_{\ti{N}}^n\geq C^{-1}J_\pi \omega_{\ti{N}}^n,$$
while by \eqref{to2}
$$(\pi^*\omega)^n\leq C\prod_{j=1}^p|s_j|_{h_j}^{2\beta_j}\left(1-\sum_{i=1}^\mu\log|s_i|_{h_i}\right)^d \frac{ \omega_{\ti{N}}^n}{\prod_{i=p+1}^\mu |s_i|_{h_i}^{2(1-\alpha_i)}},$$
and so
\begin{equation}\label{asino2}
C^{-1}J_\pi\leq \left(1-\sum_{i=1}^\mu\log|s_i|_{h_i}\right)^d \frac{\prod_{j=1}^p|s_j|_{h_j}^{2\beta_j} }{\prod_{i=p+1}^\mu |s_i|_{h_i}^{2(1-\alpha_i)}}.
\end{equation}
Given a component of $\ti{D}$, given by $\{s_i=0\}$ for some $1\leq i\leq \mu$, near its generic point we have that $J_\pi>0$ so by \eqref{asino2} we must have $i>p$ (i.e. there is no zero for $(\pi^*\omega)^n$ near this point).

On the other hand, for components of $F$, $(\pi^*\omega)^n$ may have either a zero or a conical type singularity, but if it has a zero, it cannot vanish more than $J_\pi$ (up to much smaller logarithmic terms), thanks again to \eqref{asino2}.

In particular, if $N$ is smooth and $D^{(1)}$ has simple normal crossings, then by definition we can take $\ti{N}=N, \pi=\mathrm{Id}$ and $E=\ti{D}=D$, so this discussion shows that in this case all the $\alpha_i$ that appear in Theorem \ref{volest} must be $\leq 1$, or equivalently that there are no $\beta_j$'s in Theorem \ref{main2} (i.e. $p=0$). In other words, this shows that:

\begin{corollary}\label{main3}
Assume that $N$ is smooth and $D^{(1)}=\cup_{j=1}^\mu D_j$ has simple normal crossings.
Then there are constants $C,d>0$ and rational numbers $0<\alpha_j\leq 1$, $1\leq j\leq \mu$, such that on $N_0$ we have
\begin{equation}\label{sncest}
C^{-1}\omega_{\rm cone}^n\leq \omega^n\leq C\left(1-\sum_{i=1}^\mu\log|s_i|_{h_i}\right)^d \omega_{\rm cone}^n,
\end{equation}
where $\omega_{\rm cone}$ is a conical metric with cone angle $2\pi\alpha_j$ along the component $D_j,1\leq j\leq\mu.$
\end{corollary}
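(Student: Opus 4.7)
The plan is to specialize the volume-form estimate of Theorem \ref{main2} to the case $\pi=\mathrm{Id}$ and then to rule out, via the lower bound on $\mathcal{F}$ from Lemma \ref{moron}, the possibility that any of the exponents $\beta_j$ appearing in Theorem \ref{main2} is strictly positive; once this is established, \eqref{sncest} is exactly \eqref{to2} with $p=0$.

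Since $N$ is smooth and $D^{(1)}$ has simple normal crossings, the resolution introduced at the start of Section \ref{sectvol} can be chosen to be the identity, so $\ti{N}=N$, $E=D=D^{(1)}$, each $E_i$ equals $D_i$, and the Jacobian $J_\pi$ of \eqref{jac} satisfies $J_\pi\equiv 1$. Applying Theorem \ref{main2} produces integers $p,d\geq 0$, rationals $\beta_j>0$ for $1\leq j\leq p$ and $0<\alpha_i\leq 1$ for $p+1\leq i\leq \mu$, together with a conical metric $\omega_{\rm cone}$ with cone angle $2\pi\alpha_i$ along $D_i$, for which \eqref{to2} holds. Combining $\omega^n=c\mathcal{F}\omega_N^n$ with Lemma \ref{moron} yields the lower bound $\omega^n\geq C^{-1}\omega_N^n$, and inserting this into \eqref{asino2} with $J_\pi\equiv 1$ gives the pointwise inequality
\begin{equation*}
C^{-1}\leq\left(1-\sum_{i=1}^\mu\log|s_i|_{h_i}\right)^d \frac{\prod_{j=1}^p|s_j|_{h_j}^{2\beta_j}}{\prod_{i=p+1}^\mu |s_i|_{h_i}^{2(1-\alpha_i)}}
\end{equation*}
valid on $N_0$.

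The main step, and the only place where genuine work is needed, is to deduce $p=0$ from this inequality. Suppose for contradiction that $\beta_{j_0}>0$ for some $1\leq j_0\leq p$. I would pick a sequence $y_k\in N_0$ converging to a smooth point of $D_{j_0}$ while staying uniformly away from every other irreducible component of $D$; such a sequence exists because $D$ has simple normal crossings. Along $y_k$, the denominator on the right-hand side of the displayed inequality is bounded above, the logarithmic factor grows only like a power of $|\log|s_{j_0}|_{h_{j_0}}(y_k)|$, while the numerator $|s_{j_0}|_{h_{j_0}}^{2\beta_{j_0}}(y_k)$ decays polynomially to $0$. Since polynomial decay dominates polylogarithmic growth, the right-hand side tends to $0$, contradicting the uniform positive lower bound $C^{-1}$. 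Hence $p=0$, and \eqref{to2} reduces directly to \eqref{sncest}, completing the argument.
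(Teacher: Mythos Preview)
Your proof is correct and follows essentially the same approach as the paper: the paper's proof is the discussion immediately preceding the corollary, which likewise specializes Theorem \ref{main2} with $\pi=\mathrm{Id}$, invokes Lemma \ref{moron} to obtain \eqref{asino2} with $J_\pi\equiv 1$, and then observes that near a generic point of any component $D_{j_0}$ the inequality forces $j_0>p$, i.e.\ $p=0$. Your contradiction argument with a sequence approaching a smooth point of $D_{j_0}$ is just an explicit spelling-out of the paper's phrase ``near its generic point we have $J_\pi>0$ so by \eqref{asino2} we must have $i>p$.''
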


We can now give the proof of \eqref{quasiisom}.
\begin{proof}[Proof of \eqref{quasiisom}]
On $N$ we have the continuous function $\vp$ (normalized with $\sup_N\vp=0$), which satisfies
$$\omega=\omega_N+\ddbar\vp,$$
and solves the Monge-Amp\`ere equation
\begin{equation}\label{ma0}\omega^n=(\omega_N+\ddbar\vp)^n=c (-1)^{\frac{m^2}{2}}f_*(\Omega\wedge\bar{\Omega})=\frac{\psi}{\prod_{j=1}^\mu |s_j|_{h_j}^{2(1-\alpha_j)}}\omega_N^n,
\end{equation}
on $N_0$ (and also globally on $N$ in the weak sense), where the smooth function $\psi$ on $N_0$ is defined by this last equality. In particular,
\begin{equation}\label{psh}\begin{split}
\ddbar\log(1/\psi)&=\Ric(\omega)-\Ric(\omega_N)-\sum_{j=1}^\mu (1-\alpha_j)\ddbar\log|s_j|_{h_j}^2\\
&\geq -\Ric(\omega_N)+\sum_{j=1}^\mu (1-\alpha_j)R_{h_j}\\
&\geq -C\omega_N,
\end{split}\end{equation}
holds on $N_0$.
Thanks to Corollary \ref{main3}, on $N_0$ we have that
\begin{equation}\label{dream}
C^{-1}\leq \psi\leq C\left(1-\sum_{i=1}^\mu\log|s_i|_{h_i}\right)^d.
\end{equation}
As shown in \eqref{psh} function $\log(1/\psi)$ is quasi-psh, and thanks to \eqref{dream} it is bounded above near $N\backslash N_0$, and so it extends uniquely to a quasi-psh function on $N$ (still denoted by $\log(1/\psi)$) which satisfies the same inequality \eqref{psh} by a classical result of Grauert-Remmert \cite{GR}.

Thanks to Demailly's regularization theorem \cite{De}, we can find smooth functions $u_j$ on $N$ which decrease pointwise to $\log(1/\psi)$ as $j\to\infty$ and which satisfy $\ddbar u_j\geq -C\omega_N$ and $u_j\leq C$ on $N$.

We now employ a trick from Datar-Song \cite{DS}.
For each $i=1,\dots,\mu$, let $\omega_{{\rm cone}, i}$ be a conical K\"ahler metric with cone angle $2\pi\alpha_i$ along the component $D_i$, and smooth everywhere else. For example, we can take
$$\omega_{{\rm cone},i}=\omega_N+\delta\ddbar |s_i|_{h_i}^{2\alpha_i},$$
and let also $\eta_i=\delta|s_i|_{h_i}^{2\alpha_i}.$ Then for all $j\geq 1$ let
$$G_{j,i}=\prod_{k\neq i}\left(|s_k|^2_{h_k}+\frac{1}{j}\right)^{1-\alpha_k},$$
which are smooth functions on $N$ which satisfy
\[\begin{split}
\ddbar\log G_{j,i}&=\sum_{k\neq i}(1-\alpha_k)\ddbar\log\left(|s_k|^2_{h_k}+\frac{1}{j}\right)\\
&\geq \sum_{k\neq i}(1-\alpha_k)\frac{|s_k|^2_{h_k}}{|s_k|^2_{h_k}+\frac{1}{j}}\ddbar\log |s_k|^2_{h_k}\\
&\geq -C\omega_N,
\end{split}\]
everywhere on $N$, for a constant independent of $j$.

We can now consider the complex Monge-Amp\`ere equation
\begin{equation}\label{ma}
(\omega_N+\ddbar\vp_{j,i})^n = c_{j,i} e^{-u_j}\frac{\omega_N^n}{G_{j,i} |s_i|_{h_i}^{2(1-\alpha_i)}},
\end{equation}
where $c_{j,i}$ are constants that are obtained by integrating the equation, so $c_{j,i}\to 1$ as $j\to\infty$, the functions $\vp_{j,i}$ are normalized by $\sup_N\vp_{j,i}=0$, and $\omega_{j,i}:=\omega_N+\ddbar\vp_{j,i}$ are now conical K\"ahler metrics with cone angle $2\pi\alpha_i$ along $D_i$ (only), and smooth away from $D_i$. This last fact follows from the main result of \cite{JMR} or \cite{GP} (see also \cite{Br,CGP} for earlier weaker results and \cite{Al} for more recent work). It is quickly verified that the ratio $\omega_{j,i}^n/\omega_N^n$ has $L^p$ norm uniformly bounded independent of $j$, for some $p>1$, and so by Ko\l odziej \cite{Kol} we have $$\sup_N |\vp_{j,i}|\leq C,$$
and since the right hand side of \eqref{ma} converges to the right hand side of \eqref{ma0} (say in $L^1(N)$), Ko\l odziej's stability theorem \cite{Kol2} gives us that $\vp_{j,i}\to \vp$ in $C^0(N)$ as $j\to\infty$.

Our goal is to prove that
\begin{equation}\label{goalo}
\omega_{j,i}\geq C^{-1}\omega_{{\rm cone},i},
\end{equation}
holds on $N_0$, for a constant $C$ independent of $j$. If this is proved, then it follows from this and \eqref{ma} (by standard arguments) that on any compact subset $K\subset N_0$ we have uniform $C^k$ bounds for $\omega_{j,i}$ independent of $j$ (but depending on $K$), and since $\vp_{j,i}\to \vp$ uniformly, we conclude that $\omega_{j,i}\to \omega$ in $C^\infty_{\rm loc}(N_0)$ as $j\to\infty$ (for all $i$). Therefore
$$\omega\geq C^{-1}\omega_{{\rm cone},i},$$
holds on $N_0$, and summing these up for $i=1,\dots,\mu$ we obtain
$$\omega\geq C^{-1}\omega_{{\rm cone}},$$
on $N_0$, which is half of \eqref{to0bis}.
To prove the other half of \eqref{to0bis} it suffices to use \eqref{ma0} and \eqref{dream} to obtain
$$\tr{\omega_{{\rm cone}}}{\omega}\leq (\tr{\omega}{\omega_{{\rm cone}}})^{n-1}\frac{\omega^n}{\omega_{{\rm cone}}^n}\leq C \frac{\omega^n}{\omega_{{\rm cone}}^n}\leq C\psi\leq C\left(1-\sum_{i=1}^\mu\log|s_i|_{h_i}\right)^d,$$
on $N_0$, which thus proves \eqref{to0bis}.

So it remains to prove \eqref{goalo}. The argument is similar to the one in \cite{DS}. First of all, by a calculation in the appendix of \cite{JMR} (or the simpler proof in \cite[Lemma 3.14]{Yan} due to J. Sturm), the bisectional curvature of $\omega_{{\rm cone},i}$ on $N\backslash D_i$ has a uniform upper bound. Second of all, the Ricci curvature of $\omega_{j,i}$ on $N\backslash D_i$ satisfies
$$\Ric(\omega_{j,i})=\ddbar u_j+\Ric(\omega_N)+\ddbar\log G_{j,i}+(1-\alpha_i)\ddbar\log |s_i|^2_{h_i}\geq-C\omega_{N},$$
independent of $j$. Combining these two into Yau's Schwarz Lemma calculation \cite{Ya2} gives us
$$\Delta_{\omega_{j,i}}\log\tr{\omega_{j,i}}{\omega_{{\rm cone},i}}\geq -C\tr{\omega_{j,i}}{\omega_{{\rm cone},i}}-C,$$
on $N\backslash D_i$. To ensure that the maximum is achieved away from $D_i$, we compute
$$\Delta_{\omega_{j,i}}\log(|s_i|_{h_i}^{2\gamma}\tr{\omega_{j,i}}{\omega_{{\rm cone},i}})\geq -C\tr{\omega_{j,i}}{\omega_{{\rm cone},i}}-C,$$
where we used that $\omega_N\leq C \omega_{{\rm cone},i}$ on $N\backslash D_i$, and the constant $C$ does not depend on $\gamma>0$ small. On the other hand
$$\Delta_{\omega_{j,i}}(\vp_{j,i}-\eta_i)=n-\tr{\omega_{j,i}}{\omega_{{\rm cone},i}},$$
so we get the differential inequality
$$\Delta_{\omega_{j,i}}\log(|s_i|_{h_i}^{2\gamma}\tr{\omega_{j,i}}{\omega_{{\rm cone},i}}-A(\vp_{j,i}-\eta_i))\geq \tr{\omega_{j,i}}{\omega_{{\rm cone},i}}-C,$$
for $A$ large and uniform (independent of $j$). The maximum principle can then be applied because $\omega_{j,i}$ and $\omega_{{\rm cone},i}$ are conical metrics with the same cone angles along $D_i$ hence the function $\tr{\omega_{j,i}}{\omega_{{\rm cone},i}}$ is uniformly bounded above on $N\backslash D_i$, so the maximum is achieved away from $D_i$. Using the uniform $C^0$ bound for $\vp_{j,i}$, we conclude that
$$\tr{\omega_{j,i}}{\omega_{{\rm cone},i}}\leq \frac{C}{|s_i|_{h_i}^{2\gamma}},$$
holds on $N\backslash D_i$ with constant independent of $\gamma$ and $j$, so we can let $\gamma\to 0$ and get \eqref{goalo}.
\end{proof}

\section{The main theorem}\label{sectgen}
In this section we prove the main theorem \ref{prin1}, in the Calabi-Yau setting.

Let $\pi:\ti{N}\to N$ be the composition of blowups constructed in section \ref{sectvol}, with $\ti{N}$ smooth, $\pi^{-1}(D)=E$ a simple normal crossings divisor, and $\pi$ an isomorphism on the locus where $D^{(1)}$ is snc. We write $E=\cup_{i=1}^\mu E_i$, and we have fixed defining sections $s_i$ and metrics $h_i$ for $\mathcal{O}(E_i)$.

If we denote by $F=\cup_i F_i\subset E$ the exceptional components of $E$, then it is well-known (see e.g. \cite[Lemma 7]{PS}) that for all $0<\delta\ll 1$ the form
\begin{equation}\label{asin}
\pi^*\omega_N-\delta \sum_i R_{F_i},
\end{equation}
is a K\"ahler metric on $\ti{N}$, where $R_{F_i}$ is the curvature of a certain Hermitian metric on $\mathcal{O}(F_i)$.

Next, from section \ref{sectvol} we obtain a conical structure $\{E_i,2\pi \alpha_i\}$ on $\ti{N}$, and we fix a conical K\"ahler metric $\omega_{\rm cone}$ with these cone angles around the $E_i$'s. We will choose it of the form $\omega_{\rm cone}=\omega_{\ti{N}}+\ddbar\eta,$ where $$\eta=C^{-1}\sum_i |s_i|^{2\alpha_i}_{h_i},$$ for some $C>0$ sufficiently large.
Note that of course we have $\omega_{\ti{N}}\leq C \omega_{\rm cone}$.
We also obtain the order of zeros $\beta_j$ (along some components of $F$, different from the components that have conical singularities), and we define
$$H=\prod_j |s_j|_{h_j}^{2\beta_j},$$
where the product is only over those components $F_j$ of $F$ which have a nontrivial order of zero $\beta_j>0$. On $\ti{N}\backslash F$ we have
\begin{equation}\label{ein}
\ddbar\log H=-\sum_j \beta_jR_{F_j}.
\end{equation}

We can then define a smooth function
$\psi$ on $\ti{N}\backslash E$ by
\begin{equation}\label{ma4}
\psi=\frac{\pi^*\omega^n\prod_i |s_i|^{2(1-\alpha_i)}}{H\omega_{\ti{N}}^n},
\end{equation}
which satisfies
\begin{equation}\label{sat}
C^{-1}\leq \psi\leq C\left(1-\sum_{i=1}^\mu\log|s_i|_{h_i}\right)^d,
\end{equation}
by Theorem \ref{main2}. We thus have the Monge-Amp\`ere equation
\begin{equation}\label{maeqn}
(\pi^*\omega_N+\ddbar\pi^*\vp)^n=\psi H\frac{\omega_{\ti{N}}^n}{\prod_i |s_i|^{2(1-\alpha_i)}}.
\end{equation}

The first step is to regularize equation \eqref{maeqn} as follows.
On $\ti{N}\backslash E$ we have
\begin{equation}\label{low}\begin{split}
\ddbar\log(1/\psi)&=\Ric(\pi^*\omega)-\Ric(\omega_{\ti{N}})+\sum_i (1-\alpha_i)R_i+\ddbar\log H\\
&\geq -\Ric(\omega_{\ti{N}})+\sum_i (1-\alpha_i)R_i-\sum_j \beta_jR_{F_j},
\end{split}\end{equation}
and note that the RHS is a smooth form on all of $\ti{N}$.
Also $\log(1/\psi)$ is bounded above near $E$ by \eqref{sat}, so by Grauert-Remmert \cite{GR} it extends to a global quasi-psh function on $\ti{N}$ which satisfies \eqref{low} in the weak sense. Thanks to \eqref{sat}, the extension has vanishing Lelong numbers, so we can approximate it using Demailly's regularization theorem \cite{De} by a decreasing sequence of smooth functions $u_j$ which satisfy
\begin{equation}\label{zwei}
\ddbar u_j\geq -\Ric(\omega_{\ti{N}})+\sum_i (1-\alpha_i)R_i-\sum_j \beta_jR_{F_j}-\frac{1}{j}\omega_{\ti{N}},
\end{equation}
on all of $\ti{N}$.
We also have that $\ddbar \log H\geq -C\omega_{\ti{N}}$ weakly on $\ti{N}$ and so $\log H$ can also be regularized by smooth functions $v_k$ with $\ddbar v_k\geq -C\omega_{\ti{N}}$ and $v_k\leq C$ on $\ti{N}$.

Let $\omega_{j,k}=\pi^*\omega_N+\frac{1}{j}\omega_{\ti{N}}+\ddbar\vp_{j,k}$ be a solution on $\ti{N}$ of
\begin{equation}\label{ma1}
\omega_{j,k}^n=c_{j,k} e^{v_k-u_j}\frac{\omega_{\ti{N}}^n}{\prod_i |s_i|^{2(1-\alpha_i)}},
\end{equation}
with $\omega_{j,k}$  a cone metric with the same cone angles as $\omega_{\rm cone}$. This exists thanks to the main result of \cite{JMR} or \cite{GP} (see also \cite{Br,CGP} for earlier weaker results and \cite{Al} for more recent work). In particular, $\omega_{j,k}$ is smooth on $\ti{N}\backslash E$.

It is quickly verified that the ratio $\omega_{j,k}^n/\omega_{\ti{N}}^n$ has $L^p$ norm uniformly bounded independent of $j$, for some $p>1$, and so by \cite{EGZ2} we have $$\sup_{\ti{N}} |\vp_{j,k}|\leq C,$$
and since $e^{v_k-u_j}\to H\psi$, we conclude using the stability theorem in \cite{EGZ2} that as $j,k\to\infty$ we have $c_{j,k}\to 1$ and $\vp_{j,k}\to\pi^*\vp$ in $C^0(\ti{N})$, where $\pi^*\vp$ solves \eqref{maeqn}.

We also introduce a partial regularization, which we denote by $\omega_{j}=\pi^*\omega_N+\frac{1}{j}\omega_{\ti{N}}+\ddbar\vp_{j}$, which are K\"ahler metrics on $\ti{N}\backslash E$ which solve
\begin{equation}\label{ma2}
\omega_{j}^n=c_{j} H e^{-u_j}\frac{\omega_{\ti{N}}^n}{\prod_i |s_i|^{2(1-\alpha_i)}}.
\end{equation}
Again we have $c_j\to 1$, and $c_{j,k}\to c_j$ for each $j$ fixed as $k\to\infty$. The solution $\vp_j\in C^0(\ti{N})$ exists by \cite{ST,To1,EGZ2,CGZ}, and are smooth away from $E$ by Yau \cite{Ya}. Again we have that $\vp_{j,k}\to\vp_j$ in $C^0(\ti{N})$ for each $j$ fixed as $k\to\infty$.

\begin{proposition}\label{p1}
For each $j$ there is a constant $C_j$ which depends on $j$ such that on $\ti{N}\backslash E$ we have
\begin{equation}\label{go2}
\tr{\omega_{\rm cone}}{\omega_{j,k}}\leq C_j,
\end{equation}
for all $k$.
\end{proposition}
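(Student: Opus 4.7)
The plan is to combine the pointwise Aubin-Yau second-order inequality for $\log\tr{\omega_{\rm cone}}{\omega_{j,k}}$ with a Datar-Song auxiliary-function trick, in the spirit of the snc case of Section~\ref{sectsnc}. Although $\omega_{j,k}$ and $\omega_{\rm cone}$ are not cohomologous, the Aubin-Yau inequality is a pointwise statement derived from local $\ddbar$-potentials, so on $\ti{N}\setminus E$ it still gives
\[
\Delta_{\omega_{j,k}}\log\tr{\omega_{\rm cone}}{\omega_{j,k}} \geq \frac{\tr{\omega_{\rm cone}}{(\Ric(\omega_{\rm cone})-\Ric(\omega_{j,k}))}}{\tr{\omega_{\rm cone}}{\omega_{j,k}}} - C_1\tr{\omega_{j,k}}{\omega_{\rm cone}},
\]
where $-C_1$ is a lower bound on the holomorphic bisectional curvature of $\omega_{\rm cone}$, finite on $\ti{N}\setminus E$ by \cite{JMR} or \cite[Lemma 3.14]{Yan}.

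The crucial ingredient is a uniform-in-$k$ upper bound $\Ric(\omega_{j,k})\leq C_j\omega_{\rm cone}$. From \eqref{ma1} we read off
\[
\Ric(\omega_{j,k}) = -\ddbar v_k + \ddbar u_j - \sum_{i=1}^\mu(1-\alpha_i)R_i + \Ric(\omega_{\ti{N}}),
\]
and the one-sided bound $\ddbar v_k\geq -C\omega_{\ti{N}}$ is equivalent to $-\ddbar v_k\leq C\omega_{\ti{N}}$, which is the correct direction for an upper bound on $\Ric(\omega_{j,k})$; all other summands are smooth and bounded for fixed $j$. Together with boundedness of the scalar curvature of $\omega_{\rm cone}$ on $\ti{N}\setminus E$ and the eigenvalue (AM-HM) inequality $1/\tr{\omega_{\rm cone}}{\omega_{j,k}}\leq\tr{\omega_{j,k}}{\omega_{\rm cone}}/n^2$, the differential inequality becomes
\[
\Delta_{\omega_{j,k}}\log\tr{\omega_{\rm cone}}{\omega_{j,k}} \geq -C_j'\,\tr{\omega_{j,k}}{\omega_{\rm cone}}.
\]

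Next I would apply the maximum principle to
\[
G_\gamma := \log\tr{\omega_{\rm cone}}{\omega_{j,k}} - A_j(\vp_{j,k}-\eta) + \gamma\sum_{i=1}^\mu\log|s_i|^2_{h_i},
\]
for $A_j$ large and $\gamma>0$ small. Since $\ddbar(\vp_{j,k}-\eta)=\omega_{j,k}-\bigl(\pi^*\omega_N-(1-\tfrac{1}{j})\omega_{\ti{N}}+\omega_{\rm cone}\bigr)$, together with the elementary bound $\pi^*\omega_N-(1-\tfrac{1}{j})\omega_{\ti{N}}+\omega_{\rm cone}\geq \tfrac{1}{j}\omega_{\rm cone}$ (which follows from $\omega_{\rm cone}=\omega_{\ti{N}}+\ddbar\eta\geq\omega_{\ti{N}}$ and $\pi^*\omega_N\geq 0$), the $A_j$-term contributes $-A_j\Delta_{\omega_{j,k}}(\vp_{j,k}-\eta)\geq (A_j/j)\tr{\omega_{j,k}}{\omega_{\rm cone}}-A_jn$, while the cutoff contributes $\geq -C\gamma\tr{\omega_{j,k}}{\omega_{\rm cone}}$. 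Choosing $A_j$ large and $\gamma$ small yields $\Delta_{\omega_{j,k}}G_\gamma\geq \tfrac12\tr{\omega_{j,k}}{\omega_{\rm cone}}-A_jn$. Because $G_\gamma\to-\infty$ along $E$, its maximum is attained at an interior point $p_0$, where $\tr{\omega_{j,k}}{\omega_{\rm cone}}(p_0)\leq 2A_jn$; combined with the eigenvalue identity $\tr{\omega_{\rm cone}}{\omega_{j,k}}\leq C_n(\tr{\omega_{j,k}}{\omega_{\rm cone}})^{n-1}\omega_{j,k}^n/\omega_{\rm cone}^n$ and the Monge-Amp\`ere equation \eqref{ma1} (which, via $v_k\leq C$, gives $\omega_{j,k}^n/\omega_{\rm cone}^n\leq C_j$ uniformly in $k$), this bounds $G_\gamma(p_0)$; the uniform $C^0$ bound on $\vp_{j,k}$ and the passage $\gamma\to 0^+$ then complete the proof.

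The principal obstacle is that $\omega_{j,k}$ and $\omega_{\rm cone}$ live in distinct cohomology classes, so the standard Aubin-Yau maximum-principle argument cannot be used directly; the substitute inequality $\pi^*\omega_N-(1-\tfrac{1}{j})\omega_{\ti{N}}+\omega_{\rm cone}\geq\tfrac{1}{j}\omega_{\rm cone}$ is what rescues the argument, and is also the source of the $j$-dependence of the constant $C_j$. Secondarily, the fact that $\ddbar v_k$ is controlled on only one side rules out a Chern-Lu Schwarz-lemma approach (which would require bisectional curvature control on the target $\omega_{j,k}$) and forces the Aubin-Yau formulation, for which fortunately the available sign on $\ddbar v_k$ is exactly the right one to give the Ricci \emph{upper} bound we need.
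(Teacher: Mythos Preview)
Your approach has two genuine gaps that make the argument fail as written.

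\textbf{First gap: the curvature of $\omega_{\rm cone}$.} You claim a uniform lower bound $-C_1$ on the holomorphic bisectional curvature of $\omega_{\rm cone}$ on $\ti{N}\setminus E$, citing \cite{JMR} and \cite[Lemma~3.14]{Yan}. Those references establish only an \emph{upper} bound on the bisectional curvature, and only for a conical metric along a \emph{single smooth} divisor. For a conical metric along an snc divisor with several crossing components --- which is precisely the situation here --- neither an upper nor a lower bisectional curvature bound holds uniformly on $\ti{N}\setminus E$; this is exactly why Guenancia--P\u{a}un introduce the auxiliary function $\Psi$ so that ${\rm Rm}(\omega_{\rm cone})\geq -(C\omega_{\rm cone}+\ddbar\Psi)\otimes\mathrm{Id}$ (cf.\ \eqref{curv} and \cite[(4.3)]{GP}). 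Without the $\Psi$-correction, your differential inequality $\Delta_{\omega_{j,k}}\log\tr{\omega_{\rm cone}}{\omega_{j,k}}\geq -C_j'\tr{\omega_{j,k}}{\omega_{\rm cone}}$ is simply false.

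\textbf{Second gap: the inequality $\omega_{\rm cone}\geq\omega_{\ti{N}}$.} This is equivalent to $\ddbar\eta\geq 0$, which does not hold: one computes $\ddbar|s_i|_{h_i}^{2\alpha_i}=\alpha_i^2|s_i|_{h_i}^{2\alpha_i}\mn\de\log|s_i|^2_{h_i}\wedge\db\log|s_i|^2_{h_i}-\alpha_i|s_i|_{h_i}^{2\alpha_i}R_{h_i}$, and the second term is typically negative. Consequently your key inequality $\pi^*\omega_N-(1-\tfrac{1}{j})\omega_{\ti{N}}+\omega_{\rm cone}\geq\tfrac{1}{j}\omega_{\rm cone}$, i.e.\ $\pi^*\omega_N+(1-\tfrac{1}{j})\ddbar\eta\geq 0$, fails: near a generic point of a $\pi$-exceptional component $F_j\subset F$ the form $\pi^*\omega_N$ is degenerate while $\ddbar\eta$ can be strictly negative, so the left-hand side has a negative eigenvalue. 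This destroys the mechanism by which your $A_j$-term produces the good $\tr{\omega_{j,k}}{\omega_{\rm cone}}$ contribution.

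The paper avoids both obstacles by a further smoothing: it regularizes \eqref{ma1} to the genuinely smooth equation \eqref{ma3} with parameter $\ve$, and then quotes the Laplacian estimate of \cite[Proposition~1]{GP} using their approximating reference metrics $\omega_\ve=\pi^*\omega_N+\tfrac{1}{j}\omega_{\ti{N}}+\ddbar\chi_{j,\ve}$ (in the same cohomology class as $\omega_{j,k,\ve}$, so no analogue of your second issue arises) together with their auxiliary function $\Psi_\ve$ (which handles the unbounded curvature). The only twist is that the hypothesis $|\psi^+|\leq C_j$ of \cite{GP} is weakened to $\psi^+\leq C_j$, which they observe is all that is actually used.
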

This, together with \eqref{ma1} and standard arguments, implies that $\omega_{j,k}\to\omega_j$ locally smoothly away from $E$, and so on $\ti{N}\backslash E$ we also have
\begin{equation}\label{go1}
\tr{\omega_{\rm cone}}{\omega_{j}}\leq C_j.
\end{equation}
\begin{proof}
This follows from the estimates of Guenancia-P\u{a}un \cite{GP}. Specifically, as in \cite[proof of Theorem A]{GP}, we make one further approximation of \eqref{ma1} by a smooth PDE
\begin{equation}\label{ma3}
\omega_{j,k,\ve}^n=c_{j,k,\ve} e^{v_k-u_j}\frac{\omega_{\ti{N}}^n}{\prod_i (|s_i|^2+\ve^2)^{(1-\alpha_i)}},
\end{equation}
where $\omega_{j,k,\ve}=\pi^*\omega_N+\frac{1}{j}\omega_{\ti{N}}+\ddbar\vp_{j,k,\ve}$ is now a smooth K\"ahler metric on $\ti{N}$. Again we have $\vp_{j,k,\ve}\to\vp_{j,k}$ uniformly as $\ve\to 0$, and the claim is then that
\begin{equation}\label{go3}
\tr{\omega_\ve}{\omega_{j,k,\ve}}\leq C_j,
\end{equation}
where $C_j$ does not depend on $k$ or $\ve$, and $\omega_\ve=\pi^*\omega_N+\frac{1}{j}\omega_{\ti{N}}+\ddbar\chi_{j,\ve}$ is a family of K\"ahler metrics (which also depend on $j$) that as $\ve\to 0$ approximate a ($j$-dependent) conical metric, as constructed in \cite[Section 3]{GP}. This, together with \eqref{ma3} and standard arguments, implies that $\omega_{j,k,\ve}\to\omega_{j,k}$ locally smoothly away from $E$ as $\ve\to 0$, and then \eqref{go2} follows because we allow all the constants to depend on $j$.

The proof of \eqref{go3} follows from the arguments for their Laplacian estimate in \cite[Proposition 1]{GP}, which however cannot be applied directly because they make one extra hypothesis which for us is not satisfied. The key points for us, which make the estimate independent of $k$, is that $v_k\leq C$, and $\ddbar v_k\geq -C\omega_{\ti{N}}$.

Here are the details: using the notation of \cite{GP}, the functions $\ti{\vp}=\vp_{j,k,\ve}-\chi_{j,\ve},$ are uniformly bounded (independent of $j,k,\ve$) and satisfy $\omega_\ve+\ddbar\ti{\vp}=\omega_{j,k,\ve}$. We therefore rewrite \eqref{ma3} as
$$(\omega_\ve+\ddbar\ti{\vp})^n=e^{\psi^+}\omega_\ve^n,$$
where
$$e^{\psi^+}=c_{j,k,\ve} e^{v_k-u_j}\frac{\omega_{\ti{N}}^n}{\prod_i (|s_i|^2+\ve^2)^{(1-\alpha_i)}\omega_\ve^n}=c_{j,k,\ve} e^{v_k-u_j}e^{F_\ve},$$
where $F_\ve$ is exactly defined as in \cite{GP}. Defining $\Psi_\ve$ as in \cite[Section 4]{GP}, they show that $\ddbar F_\ve\geq -(C_j\omega_\ve+\ddbar\Psi_\ve)$, and so we get that
\[\begin{split}
\ddbar\psi^+&\geq  -(C_j\omega_\ve+\ddbar\Psi_\ve)+\ddbar(v_k-u_j)\\
&\geq -(C_j\omega_\ve+\ddbar\Psi_\ve)-C_j\omega_{\ti{N}}\\
&\geq -(C_j\omega_\ve+\ddbar\Psi_\ve),
\end{split}\]
using that $\omega_\ve\geq C_j^{-1}\omega_{\ti{N}}$. This is half of inequality $(iii)$ in \cite[Proposition 1]{GP}. The second half requires that $|\psi^+|\leq C_j,$ but this is not true since $v_k$ decreases to $\log H$ which is $-\infty$ along some components of $F$, so we only have that $\psi^+\leq C_j.$ However, this is enough for their arguments, as the lower bound for $\psi^+$ is not used. Therefore, the main calculation of \cite[Proposition 1]{GP} gives
\[\begin{split}
\Delta_{\omega_{j,k,\ve}}\left(\log\tr{\omega_\ve}{\omega_{j,k,\ve}}-A_j\ti{\vp}+3\Psi_\ve\right)&\geq \tr{\omega_{j,k,\ve}}{\omega_\ve}-C_j,
\end{split}\]
and at a maximum point we get $\tr{\omega_{j,k,\ve}}{\omega_\ve}\leq C_j$ and so
$$\tr{\omega_{\ve}}{\omega_{j,k,\ve}}\leq C_j\frac{\omega_{j,k,\ve}^n}{\omega_\ve^n}=C_je^{\psi^+}\leq C_j,$$
as required.
\end{proof}

Thanks to Proposition \ref{p1}, we can apply the maximum principle to quantities involving $\tr{\omega_{\rm cone}}{\omega_{j}}$, provided we add a term which goes to $-\infty$ along $E$ (arbitrarily slowly). Using this, we first prove the main result, which uses Tsuji's trick \cite{Ts}, and from which we will quickly derive Theorem \ref{prin1}:

\begin{theorem}\label{tsuj}
On $\ti{N}\backslash E$ we have
\begin{equation}\label{goal2}
\tr{\omega_{\rm cone}}{\pi^*\omega}\leq \frac{C\psi}{|s_F|^{2A}},
\end{equation}
where $F$ is the union of the $\pi$-exceptional divisors, and $|s_F|^2$ is a shorthand for $\prod_i |s_{F_i}|_{h_i}^2$.
\end{theorem}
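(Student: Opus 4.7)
My plan is to combine a Yau-type maximum principle, as in Section \ref{sectsnc}, with Tsuji's trick \cite{Ts} to handle the failure of $[\pi^*\omega_N]$ to be K\"ahler on $\ti N$. I work throughout with the smooth approximations $\omega_j$ of $\pi^*\omega$ introduced after \eqref{ma2}. Differentiating the logarithm of \eqref{ma2} and using \eqref{zwei} gives the uniform Ricci lower bound $\Ric(\omega_j)\geq -(1/j)\omega_{\ti N}\geq -C\omega_{\ti N}$ on $\ti N\setminus E$, with $C$ independent of $j$.

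By \eqref{asin}, $\chi:=\pi^*\omega_N-\delta\sum_i R_{F_i}$ is a genuine K\"ahler metric on $\ti N$ for $\delta$ small, and since $\ddbar(-\delta\log|s_F|^2)=\delta\sum_i R_{F_i}$ we may rewrite
\[
\omega_j=\chi+\tfrac{1}{j}\omega_{\ti N}+\ddbar\ti\vp_j,\qquad\ti\vp_j:=\vp_j-\delta\log|s_F|^2,
\]
with $\ti\vp_j$ smooth on $\ti N\setminus F$, uniformly bounded above, and tending to $+\infty$ along $F$. Because $\chi+(1/j)\omega_{\ti N}\geq c\omega_{\ti N}$ is now genuinely K\"ahler, the identity $\Delta_{\omega_j}(-A_2\ti\vp_j)\geq A_2\tr{\omega_j}{\chi}-A_2n$ provides the positive Laplacian gain that Tsuji's trick supplies.

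I apply the maximum principle to
\[
Q_j:=\log\tr{\omega_{\rm cone}}{\omega_j}-\log\psi_j+A_1\log|s_F|^2-A_2\ti\vp_j+A''\eta+\gamma\sum_{i=1}^\mu\log|s_i|^2_{h_i},
\]
where $\gamma>0$ is small, $A'',A_2$ are large relative to the bisectional upper bound for $\omega_{\rm cone}$ (from the appendix of \cite{JMR}) and to $A_1,\gamma$, and $A_1+A_2\delta$ is large enough to dominate the pole rate of $\log\tr{\omega_{\rm cone}}{\omega_j}$ along $F$. On $\ti D$ the trace is locally bounded since $\omega_j$ and $\omega_{\rm cone}$ have matching cone angles there, so $\gamma\sum\log|s_i|^2_{h_i}\to-\infty$ forces $Q_j\to-\infty$; along $F$ the terms $A_1\log|s_F|^2$ and $A_2\delta\log|s_F|^2$ do the same. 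Hence $Q_j$ attains its supremum at some interior point $p_j\in\ti N\setminus E$. Combining the generalized Yau--Aubin inequality $\Delta_{\omega_j}\log\tr{\omega_{\rm cone}}{\omega_j}\geq -K\tr{\omega_j}{\omega_{\rm cone}}-C$ with the identities $\Delta_{\omega_j}(A''\eta)=A''\tr{\omega_j}{\omega_{\rm cone}-\omega_{\ti N}}$, $\Delta_{\omega_j}(-\log\psi_j)=\Delta_{\omega_j}u_j\geq -C\tr{\omega_j}{\omega_{\ti N}}$, and the Tsuji identity, and choosing the constants appropriately, we get $\Delta_{\omega_j}Q_j\geq\tr{\omega_j}{\omega_{\rm cone}}-C'$. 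Thus $\tr{\omega_j}{\omega_{\rm cone}}(p_j)\leq C'$ uniformly in $j$ and $\gamma$.

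By the elementary inequality together with \eqref{ma2}, $\omega_{\rm cone}^n\asymp\omega_{\ti N}^n/\prod_{i>p}|s_i|^{2(1-\alpha_i)}$, and $\omega_j^n=\psi_j H\omega_{\ti N}^n/\prod_{i>p}|s_i|^{2(1-\alpha_i)}$, we obtain $\tr{\omega_{\rm cone}}{\omega_j}(p_j)\leq C\psi_j(p_j)H(p_j)$; substituting into $Q_j(p_j)$, the $\pm\log\psi_j$ terms cancel, leaving only positive-coefficient multiples of $\log|s|(p_j)\leq 0$ (the coefficient of $\log|s_k|$ for $k\leq p$ is $2(\beta_k+A_1+A_2\delta+\gamma)>0$; for $i>p$ it is $2\gamma>0$), so $Q_j(p_j)\leq C$ uniformly. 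Then $Q_j(x)\leq Q_j(p_j)\leq C$ gives
\[
\tr{\omega_{\rm cone}}{\omega_j}(x)\leq\frac{C\psi_j(x)}{|s_F|^{2(A_1+A_2\delta)}(x)\prod_{i>p}|s_i|^{2\gamma}(x)}
\]
on $\ti N\setminus E$; sending $\gamma\to 0$ (all constants are independent of $\gamma$) removes the $\prod|s_i|^{2\gamma}$ factor, and $j\to\infty$ via Proposition \ref{p1} and standard elliptic regularity yields \eqref{goal2} with $A=A_1+A_2\delta$. The main technical obstacle is the simultaneous consistency of the constants $A_1,A_2,\delta,\gamma$; Tsuji's extra freedom $-\delta\log|s_F|^2$ is precisely what makes all the required inequalities compatible.
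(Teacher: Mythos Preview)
Your overall architecture—the Tsuji barrier $\log|s_F|^2$, the potential $\vp_j$, the conical correction $\eta$, and the term $\gamma\sum\log|s_i|^2$ to push the maximum off $E$—is exactly right, and your endgame algebra at the maximum point is fine. The gap is in the core differential inequality you invoke. The Aubin--Yau second-order calculation for $\log\tr{\omega_{\rm cone}}{\omega_j}$ requires a \emph{lower} bound on the bisectional curvature of the reference metric $\omega_{\rm cone}$, not the upper bound from the appendix of \cite{JMR} (that upper bound is what one uses in the Chern--Lu direction for $\tr{\omega_j}{\omega_{\rm cone}}$, as in Section~\ref{sectsnc}). For a conical metric along a simple normal crossings divisor no uniform lower curvature bound is available; this is precisely why the paper imports from \cite{GP} the auxiliary potential $\Psi=C\sum_i|s_i|_{h_i}^{2\rho}$ satisfying ${\rm Rm}(\omega_{\rm cone})\geq -(C\omega_{\rm cone}+\ddbar\Psi)\otimes\mathrm{Id}$ and adds $n\Psi$ to the barrier. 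Without this, your claimed inequality $\Delta_{\omega_j}\log\tr{\omega_{\rm cone}}{\omega_j}\geq -K\tr{\omega_j}{\omega_{\rm cone}}-C$ is unjustified.

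Even after inserting $n\Psi$, the Aubin--Yau inequality still leaves the residual $-\tr{\omega_{\rm cone}}{\Ric(\omega_j)}/\tr{\omega_{\rm cone}}{\omega_j}$, and a Ricci \emph{lower} bound for $\omega_j$ controls this with the wrong sign (one would need a Ricci upper bound, which is not uniform in $j$ since $\ddbar u_j$ degenerates). You correctly include $u_j=-\log\psi_j$ in $Q_j$, but your estimate $\Delta_{\omega_j}u_j\geq -C\tr{\omega_j}{\omega_{\ti N}}$ discards exactly what is needed. The paper instead uses the sharper computation $\Delta_{\omega_j}u_j\geq \tr{\omega_j}{\Ric(\omega_j)}-C\tr{\omega_j}{\omega_{\rm cone}}$ together with the pointwise algebraic inequality
\[
-\frac{\tr{\omega_{\rm cone}}{\Ric(\omega_j)}}{\tr{\omega_{\rm cone}}{\omega_j}}+\tr{\omega_j}{\Ric(\omega_j)}\geq -\frac{C}{j}\tr{\omega_j}{\omega_{\rm cone}},
\]
valid because $\Ric(\omega_j)+\tfrac{C}{j}\omega_{\rm cone}\geq 0$; this absorbs the Ricci term with a constant independent of $j$. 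A minor further point: $\omega_j$ is not known to be a cone metric along $\ti D$, so ``matching cone angles'' is not a valid reason for $Q_j$ to tend to $-\infty$ there—you should invoke Proposition~\ref{p1} instead, as the paper does.
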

\begin{proof}
It is enough to show that on $\ti{N}\backslash E$ we have
\begin{equation}\label{g1}
\tr{\omega_{\rm cone}}{\omega_j}\leq \frac{Ce^{-u_j}}{|s_F|^{2A}},
\end{equation}
with $C$ now independent of $j$, since again this together with \eqref{ma2} and standard arguments implies that $\omega_j\to\pi^*\omega$ locally smoothly away from $E$.

Following \cite{GP} we define $\Psi=C\sum_i |s_i|_{h_i}^{2\rho},$ for some small $\rho>0$ and large $C>0$, which can be chosen so that on $\ti{N}\backslash E$ the curvature of $\omega_{\rm cone}$ satisfies
\begin{equation}\label{curv}
{\rm Rm}(\omega_{\rm cone})\geq -(C\omega_{\rm cone}+\ddbar\Psi)\otimes \mathrm{Id},
\end{equation}
see \cite[(4.3)]{GP}.

To prove \eqref{g1} we shall apply the maximum principle to the quantity
$$Q=\log\tr{\omega_{\rm cone}}{\omega_j}+n\Psi+u_j-A^2\vp_j+Ab\eta+\ve \log |s_E|^2+A\log|s_F|^2,$$
where $A$ large, $b>0$ is small and $0<\ve\leq\frac{1}{j}$. The terms
$n\Psi+u_j-A^2\vp_j+Ab\eta$ are all bounded on $\ti{N}$ (with bounds independent of $j$ except for $u_j$), while the term $\log\tr{\omega_{\rm cone}}{\omega_j}$ is bounded above on $\ti{N}\backslash E$ (depending on $j$) by Proposition \ref{p1}. Since the term $\ve \log |s_E|^2$ goes to $-\infty$ on $E$, the quantity $Q$ achieves a global maximum on $\ti{N}\backslash E$. All the following computations are at an arbitrary point of $\ti{N}\backslash E$.

The first claim is that on $\ti{N}\backslash E$ we have
\begin{equation}\label{c1}
\Delta_{\omega_j}(\log\tr{\omega_{\rm cone}}{\omega_j}+n\Psi)\geq -C\tr{\omega_j}{\omega_{\rm cone}}-\frac{\tr{\omega_{\rm cone}}{\Ric(\omega_j)}}{\tr{\omega_{\rm cone}}{\omega_j}}.
\end{equation}
Indeed by the Aubin-Yau's second order calculation we have
\[\begin{split}
&\Delta_{\omega_j}(\log\tr{\omega_{\rm cone}}{\omega_j}+n\Psi)\\
&\geq \frac{1}{\tr{\omega_{\rm cone}}{\omega_j}}\left(-\tr{\omega_{\rm cone}}{\Ric(\omega_j)}+{\rm Rm}(\omega_{\rm cone})_{p\ov{q}}^{\ \ \ k\ov{\ell}}g_j^{p\ov{q}}g_{j,k\ov{\ell}}\right)+n\Delta_{\omega_j}\Psi,
\end{split}\]
and as in \cite{GP} we choose coordinates so that at our given point $\omega_{\rm cone}$ is the identity while $\omega_j$ is diagonal with eigenvalues $\lambda_k>0$, so we can write
\[\begin{split}{\rm Rm}(\omega_{\rm cone})_{p\ov{q}}^{\ \ \ k\ov{\ell}}g_j^{p\ov{q}}g_{j,k\ov{\ell}}&=\sum_{i,k}\frac{\lambda_i}{\lambda_k}R_{i\ov{i}k\ov{k}}\\
&\geq -\sum_{i,k}\frac{\lambda_i}{\lambda_k}(C+\Psi_{k\ov{k}}),
\end{split}\]
using \eqref{curv}, while we also have $\ddbar\Psi\geq -C\omega_{\rm cone}$ by \cite[(4.2)]{GP}, so
\[\begin{split}
\Delta_{\omega_j}\Psi&=\sum_k\frac{1}{\lambda_k}(C+\Psi_{k\ov{k}})-C\tr{\omega_j}{\omega_{\rm cone}}\\
&\geq \frac{1}{n\tr{\omega_{\rm cone}}{\omega_j}}\sum_{i,k}\frac{\lambda_i}{\lambda_k}(C+\Psi_{k\ov{k}})-C\tr{\omega_j}{\omega_{\rm cone}},
\end{split}\]
and \eqref{c1} follows.

The second claim is that on $\ti{N}\backslash E$ we have
\begin{equation}\label{c2}\begin{split}
\Delta_{\omega_j}(\log\tr{\omega_{\rm cone}}{\omega_j}+n\Psi+u_j)&\geq-C\tr{\omega_j}{\omega_{\rm cone}}.\end{split}
\end{equation}
For this, recall that from \eqref{ma2}, on $\ti{N}\backslash E$ we have
\[\begin{split}
\Ric(\omega_j)&=\ddbar(u_j-\log H)+\Ric(\omega_{\ti{N}})-\sum_i(1-\alpha_i)R_i\\
&\geq-\frac{1}{j}\omega_{\ti{N}}\geq -\frac{C}{j}\omega_{\rm cone},
\end{split}\]
using \eqref{ein} and \eqref{zwei}, and also
\[\begin{split}\Delta_{\omega_j}u_j&=\tr{\omega_j}{\Ric(\omega_j)}+\tr{\omega_j}{\left(-\sum_j \beta_j R_{F_j}\right)}-\tr{\omega_j}{\Ric(\omega_{\ti{N}})}+\tr{\omega_j}{\left(\sum_i (1-\alpha_i) R_{i}\right)}\\
&\geq \tr{\omega_j}{\Ric(\omega_j)}-C\tr{\omega_j}{\omega_{\rm cone}}
\end{split}\]
so
\begin{equation}\label{t1}\begin{split}
\Delta_{\omega_j}&(\log\tr{\omega_{\rm cone}}{\omega_j}+n\Psi+u_j)\geq -C\tr{\omega_j}{\omega_{\rm cone}}-\frac{\tr{\omega_{\rm cone}}{\Ric(\omega_j)}}{\tr{\omega_{\rm cone}}{\omega_j}}
+\tr{\omega_j}{\Ric(\omega_j)}.
\end{split}\end{equation}
The subclaim is now that
\begin{equation}\label{sc2}
-\frac{\tr{\omega_{\rm cone}}{\Ric(\omega_j)}}{\tr{\omega_{\rm cone}}{\omega_j}}+\tr{\omega_j}{\Ric(\omega_j)}\geq -\frac{C}{j}\tr{\omega_j}{\omega_{\rm cone}}.
\end{equation}
Indeed,
\[\begin{split}
-\frac{\tr{\omega_{\rm cone}}{\Ric(\omega_j)}}{\tr{\omega_{\rm cone}}{\omega_j}}&+\tr{\omega_j}{\Ric(\omega_j)}=-\frac{\tr{\omega_{\rm cone}}{(\Ric(\omega_j)+\frac{C}{j}\omega_{\rm cone})}}{\tr{\omega_{\rm cone}}{\omega_j}}\\
&+\tr{\omega_j}{\left(\Ric(\omega_j)+\frac{C}{j}\omega_{\rm cone}\right)}+\frac{Cn}{j\tr{\omega_{\rm cone}}{\omega_j}}-\frac{C}{j}\tr{\omega_j}{\omega_{\rm cone}}\\
&\geq -\frac{\tr{\omega_{\rm cone}}{(\Ric(\omega_j)+\frac{C}{j}\omega_{\rm cone})}}{\tr{\omega_{\rm cone}}{\omega_j}}+\tr{\omega_j}{\left(\Ric(\omega_j)+\frac{C}{j}\omega_{\rm cone}\right)}
-\frac{C}{j}\tr{\omega_j}{\omega_{\rm cone}},
\end{split}\]
and if at any given point we choose coordinates so that $\omega_{\rm cone}$ is the identity and $\omega_j$ is diagonal with eigenvalues $\lambda_k>0$, while
$(\Ric(\omega_j)+\frac{C}{j}\omega_{\rm cone})\geq 0$ has entries $A_{i\ov{j}}$, then we note that
$$-\frac{1}{\sum_k\lambda_k}\sum_\ell A_{\ell\ov{\ell}}+\sum_\ell\lambda_\ell^{-1}A_{\ell\ov{\ell}}\geq 0,$$
which proves \eqref{sc2}. Combining \eqref{sc2} with \eqref{t1} we obtain \eqref{c2}.

Continuing our calculation, and setting for simplicity $R_F=\sum_iR_{F_i}$, we have
\[\begin{split}
\Delta_{\omega_j}Q&\geq -C\tr{\omega_j}{\omega_{\rm cone}}+A^2\tr{\omega_j}{\pi^*\omega_N}+Ab\tr{\omega_j}{\ddbar\eta}-A^2n\\
&\ \ \ -\ve\tr{\omega_j}{R_E}-A\tr{\omega_j}{R_F}\\
&\geq  -C\tr{\omega_j}{\omega_{\rm cone}}+A\tr{\omega_j}{\left(A_0\pi^*\omega_N- R_{F}+b\ddbar\eta\right)}-A^2n,
\end{split}\]
for all $A_0\leq A.$ We choose first $A_0$ such that $A_0\pi^*\omega_N-R_F$ is a K\"ahler metric $\hat{\omega}_{\ti{N}}$ (cf. \eqref{asin}), and then we choose $b$ sufficiently small so that
$\hat{\omega}_{\ti{N}}+b\ddbar\eta=\hat{\omega}_{\rm cone}$ is a conical K\"ahler metric with same cone angles as $\omega_{\rm cone}$. This implies that $\hat{\omega}_{\rm cone}\geq c\omega_{\rm cone}$ for some $c>0$, and finally we can choose $A\geq A_0$ large so that
$$\Delta_{\omega_j}Q\geq \tr{\omega_j}{\omega_{\rm cone}}-C.$$
Therefore at a maximum of $Q$ (which is not on $E$) we have
$$\tr{\omega_j}{\omega_{\rm cone}}\leq C,$$
and so using \eqref{ma2} we get
$$\tr{\omega_{\rm cone}}{\omega_j}\leq (\tr{\omega_j}{\omega_{\rm cone}})^{n-1}\frac{\omega_j^n}{\omega_{\rm cone}^n}\leq C H e^{-u_j},$$
hence
$$\log\tr{\omega_{\rm cone}}{\omega_j}+u_j\leq C\log H\leq C,$$
and so also $Q\leq C$, hence this last one holds everywhere on $\ti{N}\backslash E$. The constants do not depend on $\ve$, so we can let $\ve\to 0$ and this gives
$$\tr{\omega_{\rm cone}}{\omega_j}\leq \frac{Ce^{-u_j}}{|s_F|^{2A}},$$
which is \eqref{g1}.
\end{proof}

\begin{proof}[Proof of Theorem \ref{prin1}]
We have already proved in Proposition \ref{cod2} that $\omega$ extends to a smooth K\"ahler metric across $D^{(2)}$ (on $N^{\rm reg}$). First we work on the blown-up manifold $\ti{N}$ as above. Theorem \ref{tsuj} gives us the estimate \eqref{goal2}, which combined with \eqref{ma4} gives on $\pi^{-1}(N_0)$
\[\begin{split}
\tr{\pi^*\omega}{\omega_{{\rm cone}}}&\leq (\tr{\omega_{{\rm cone}}}{\pi^*\omega})^{n-1}\frac{\omega_{{\rm cone}}^n}{\pi^*\omega^n}\leq C \frac{\psi^{n-1}}{|s_F|^{2A'}}\frac{1}{H\psi}\\
&\leq\frac{C}{H|s_F|^{2A'}}\left(1-\sum_{i=1}^\mu\log|s_i|_{h_i}\right)^{\max(d(n-2),0)}
\end{split}\]
and so together with \eqref{goal2} we obtain on $\pi^{-1}(N_0)$
\begin{equation}\label{to0ter}
\begin{split}
&C^{-1}H|s_F|^{2A'}\left(1-\sum_{i=1}^\mu\log|s_i|_{h_i}\right)^{-\max(d(n-2),0)}\omega_{\rm cone}\\
&\leq \pi^*\omega\leq \frac{C}{|s_F|^{2A}}\left(1-\sum_{i=1}^\mu\log|s_i|_{h_i}\right)^{d}\omega_{\rm cone}.
\end{split}
\end{equation}
Now, if $x\in\ti{N}$ is any point near which $\pi$ is an isomorphism (which by construction includes the preimages of all points where $D^{(1)}$ is snc) then near $x$ we have $J_\pi>0$ (where $J_\pi$ is defined in \eqref{jac}) and $|s_F|>0$, and from \eqref{asino2} we see that we also have $H>0$, and so \eqref{to0bis} follows directly from \eqref{to0ter}.
\end{proof}
\section{Gromov-Hausdorff collapsed limits of Ricci-flat metrics}\label{sectmain}
\subsection{Proof of Corollary \ref{mainthm}}
In this section we show how Corollary \ref{mainthm} follows from the estimate \eqref{quasiisom}. The argument is essentially contained in the previous work of the second and third-named authors \cite[Theorem 2.1 and \S 3.4]{TZ2}, based on our work \cite{GTZ2}, where we proved the following:
\begin{theorem}\label{main}
Suppose that there is a constant $C>0$, rational numbers $0<\alpha_i\leq 1$, $1\leq i\leq \mu$, and $A>0$ such that on $\pi^{-1}(N_0)$ we have
\begin{equation}\label{to0}
\pi^*\omega\leq C\left(1-\sum_{i=1}^\mu\log|s_i|_{h_i}\right)^A \omega_{\rm cone},
\end{equation}
where $\omega_{\rm cone}$ is a conical metric with cone angle $2\pi\alpha_i$ along each component $E_i$.
Then parts (a) and (b) of Conjecture \ref{con1} hold. If furthermore we have that $N$ is smooth and $D^{(1)}$ is snc (so that $\pi=\mathrm{Id}$), then part (c) of Conjecture \ref{con1} also holds.
\end{theorem}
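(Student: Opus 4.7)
The strategy is to follow the reduction established in our earlier joint work \cite{GTZ2,TZ2}, which decouples parts (a) and (b) of Conjecture \ref{con1} from any direct analysis of the Ricci-flat metrics $\ti\omega_t$ themselves and reduces them to a pointwise bound of the form \eqref{to0} for $\pi^*\omega$. The conceptual input is that a conical K\"ahler metric $\omega_{\rm cone}$ on the smooth compact manifold $\ti N$ with snc divisor $E=\bigcup E_i$ and cone angles $2\pi\alpha_i\in(0,2\pi]$ has metric completion equal to $\ti N$ itself, with $E$ embedded as a subset of real Hausdorff codimension at least $2$; the integrability $\int_0^1 r(1-\log r)^A\,dr<\infty$ then ensures that the extra logarithmic factors in \eqref{to0} are harmless for both length and measure estimates.

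With this in mind, the plan for parts (a) and (b) is to invoke \cite[Theorem 2.1 and \S 3.4]{TZ2} essentially verbatim. First, \eqref{to0} gives $\sqrt{\pi^*\omega}\leq C(1-\sum\log|s_i|_{h_i})^{A/2}\sqrt{\omega_{\rm cone}}$, so integrating along short curves approaching $E$ yields finite $\omega$-length; combined with compactness of $\ti N$ this shows that the completion $(X,d_X)$ of $(N_0,\omega)$ is compact. The Hausdorff codimension bound on $S_X$ is obtained by covering $E$ by standard polydisc charts adapted to the snc structure and estimating the $\omega$-diameter of a tubular neighborhood of $E_i$ as a controlled power of its Euclidean thickness, again using \eqref{to0} together with the explicit conical model. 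For part (b), I would combine the locally uniform convergence $\ti\omega_t\to f^*\omega$ on $M\setminus S$ (from \cite{TWY,HT2}) with the uniform diameter bound coming from part (a) and the standard fiberwise collapse of $\ti\omega_t|_{M_y}$ to a point, obtaining Gromov-Hausdorff convergence $(M,\ti\omega_t)\to(X,d_X)$ via the argument of \cite[\S 3.4]{TZ2}.

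Finally, for part (c) under the extra hypothesis $\pi=\mathrm{Id}$ (so that $\ti N=N$ and $E=D^{(1)}$ is snc), I would exhibit an explicit homeomorphism $X\cong N$. The inclusion $N_0\hookrightarrow N$ extends to a continuous surjection $\Phi\colon X\to N$ because any $d_X$-Cauchy sequence in $N_0$ admits a subsequential limit in the compact set $N$ for the Euclidean topology, and two such limits must coincide by continuity of $\omega$ on $N_0$. Conversely, given $x\in D^{(1)}$ and a Euclidean-convergent sequence $x_j\to x$ in $N_0$, the estimate \eqref{to0} with cone angles at most $2\pi$ shows that $\{x_j\}$ is $d_X$-Cauchy, defining a continuous inverse $N\to X$; injectivity of this inverse tacitly relies on the matching lower bound from \eqref{quasiisom}, which is available precisely in the snc case from Section \ref{sectsnc}. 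As $X$ and $N$ are both compact Hausdorff, this continuous bijection is the desired homeomorphism. The main technical obstacle throughout is to track the logarithmic factors carefully in the length, volume, and Cauchy-sequence estimates, but they are absorbed uniformly by the integrability observation of the first paragraph.
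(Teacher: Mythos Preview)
Your approach is essentially the paper's: both reduce to \cite[Theorem~2.1 and \S3.4]{TZ2}. Two points deserve attention.

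First, \cite[Theorem~2.1]{TZ2} is stated under the additional hypotheses that $N$ is smooth and that the cone angles are of orbifold type $\alpha_i=1/N_i$. The paper notes that smoothness of $N$ is never used in that proof, and disposes of the orbifold restriction by the simple observation that for any rational $0<\alpha\le 1$ one can choose an integer $p$ with $1-\alpha<1-\tfrac{1}{p}<1$, so that $\omega_{\rm cone}\le C\omega_{\rm orb}$ for an orbifold metric $\omega_{\rm orb}$ and \eqref{to0} persists with $\omega_{\rm orb}$ in place of $\omega_{\rm cone}$. You should include this reduction explicitly.

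Second, for part (c) the paper's argument is cleaner than yours and does not invoke the lower bound in \eqref{quasiisom}. It constructs the continuous surjections $h\colon X\to N$ (Lipschitz) and $p\colon N\to X$, checks the identity $h\circ p=\mathrm{Id}_N$, and concludes on purely formal grounds that $p$ is a bijection, hence a homeomorphism between compact Hausdorff spaces. Your reliance on \eqref{quasiisom} for injectivity is therefore unnecessary, and your justification for the well-definedness of $\Phi$ (``two such limits must coincide by continuity of $\omega$ on $N_0$'') does not cover the case where both subsequential Euclidean limits lie on $D$; the Lipschitz property of $h$, established in \cite{TZ2} from the elementary global bound $\omega\ge C^{-1}\omega_N$ (a Schwarz-lemma type estimate, weaker than \eqref{quasiisom}), is what actually settles this.
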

\begin{proof}
The first statement is proved in \cite[Theorem 2.1]{TZ2}, assuming that $\omega_{\rm cone}$ is an orbifold K\"ahler metric (i.e. $\alpha_i$ of the form $\frac{1}{N_i}$ for some $N_i\in\mathbb{N}$, for all $i$) and that $N$ is smooth. The smoothness of $N$ was actually never used in the proof there. As for the orbifold metric, since given any rational number $0<\alpha\leq 1$ we can find an integer $p$ such that $1-\alpha<1-\frac{1}{p}<1$, we can bound $\omega_{\rm cone}\leq \omega_{\rm orb}$ (where $\omega_{\rm orb}$ has orbifold order $p$ along all $E_i$), so parts (a) and (b) of Conjecture \ref{con1} follow immediately from \cite[Theorem 2.1]{TZ2}.

Next, assume that $N$ is smooth and $D^{(1)}$ is an snc divisor in $N$, and let $(X,d_X)$ the metric completion of $(N_0,\omega)$, which we want to show is homeomorphic to $N$. The argument here is contained in \cite[\S 3.4]{TZ2}, and we briefly recall it. Thanks to \eqref{to0} and the arguments in \cite[\S 3.4]{TZ2}, we know that there are continuous maps
$$h:(X,d_X)\to (N,\omega_N),$$
$$p:N\to X,$$
such that $h$ is Lipschitz and surjective, and $p$ is surjective and satisfies $\mathrm{Id}=h\circ p$. Therefore $p$ is also injective and hence a homeomorphism.
\end{proof}
To conclude the proof of Corollary \ref{mainthm} it suffices to note that \eqref{to0} follows immediately from \eqref{quasiisom} in the case when $N$ is smooth and $D^{(1)}$ is snc (so $\pi=\mathrm{Id}$), and therefore Theorem \ref{main} applies.\\

Of course, when $N$ is singular or $D^{(1)}$ fails to be snc, the map $\pi$ will be nontrivial, and the estimate \eqref{goal2} that we proved in the previous section is weaker than \eqref{to0} and not sufficient by itself to carry out the arguments in \cite[Theorem 2.1]{TZ2} to prove part (a) of Conjecture \ref{con1}.

\subsection{Geometry of the K\"ahler cone}
We conclude this section with a remark that connects our setting where the Ricci-flat K\"ahler metrics $\ti{\omega}_t$ which are cohomologous to $f^*\omega_N+e^{-t}\omega_M$ and collapse as $t\to \infty$, to the geometry of the K\"ahler cone $\mathcal{K}\subset H^{1,1}(M,\mathbb{R})$ of $M$ as investigated in \cite{Hu,Mag,Wi}. This remark is certainly known to the experts, see e.g., the
discussion in the introduction of \cite{Wi}. However, this calculation
does not appear to have appeared explicitly.

More precisely, let $\mathcal{K}_1=\{\alpha\in \mathcal{K}| \alpha^m=1 \}$ be the space of unit-volume K\"ahler classes on a Calabi-Yau manifold $M$, where we use here the shorthand notation $\alpha^m=\int_M\alpha^m$. This is a smooth manifold of dimension $h^{1,1}(M)-1$, and the tangent space at $\alpha\in\mathcal{K}_1$ is the primitive $(1,1)$-cohomology group $P^{1,1}(\alpha)=\{\beta \in H^{1,1}(M, \mathbb{R})| \alpha^{m-1}\cdot \beta=0\}$. There is a natural Riemannian metric $G$ on $\mathcal{K}_1$ introduced by Huybrechts \cite{Hu} which is given by $$G(v_1,v_2)=-\alpha^{m-2}\cdot v_1 \cdot v_2,$$
for $v_1, v_2 \in P^{1,1}(\alpha)$.

Wilson pointed  out in \cite{Wi}  that degenerations of K\"{a}hler classes at finite distance with respect to $G$ correspond to Calabi-Yau varieties, and other cases are degenerations at infinite distance.
Therefore our current case is a degeneration of K\"{a}hler classes at infinite distance, which is known to experts. The completeness of the metric $G$ was also studied  in \cite[Proposition 4.4]{Mag}.

The following elementary result is formally analogous to the characterization of degenerations of complex structures on Calabi-Yau manifolds with finite Weil-Petersson distance \cite{Wa,To5,Ta0,ZhT} by characterizing our collapsing cohomology classes $[f^*\omega_N+e^{-t}\omega_M]\in\mathcal{K}$ as those which (after rescaling to volume $1$) give paths with infinite $G$-length.  For readers' convenience, we present the detailed calculations here, and  change the parametrization of our path to $s=e^{-t}$.

\begin{proposition}\label{remark}
Let $M^n$ be a compact Calabi-Yau manifold, with classes $\alpha\in\mathcal{K}$ and $\alpha_0\in\de\mathcal{K}$. For $s\in (0,1]$ let $\alpha_s=\alpha_0+s \alpha\in \mathcal{K}$, let $V_s=\alpha_s^m$, and let $\ti{\omega}_s\in \alpha_s$ be the Ricci-flat K\"{a}hler metric given by \cite{Ya}. Then the following statements are equivalent:
\begin{itemize}
\item[(a)] $\alpha_0^m=0$.
\item[(b)]  $(M,\ti{\omega}_s)$ collapses as $s\to 0$, i.e. for any metric 1-ball $B_{\ti{\omega}_s}(p,1)$, $${\rm Vol}_{\ti{\omega}_s}(B_{\ti{\omega}_s}(p,1)) \rightarrow 0.$$
\item[(c)] If we denote $\tilde{\alpha}_s=V_s^{-\frac{1}{m}}\alpha_s \in \mathcal{K}_1$, $s\in (0,1]$, then the length of the path $\{\tilde{\alpha}_s\}_{s\in (0,1]}$ with respect to $G$ is infinite.
    \item[(d)] When $s\rightarrow 0$, $\tilde{\alpha}_s$ diverges in $H^{1,1}(M, \mathbb{R})$.
\end{itemize}
\end{proposition}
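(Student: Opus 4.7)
The plan is to establish Proposition \ref{remark} by a direct computation of the Huybrechts metric $G$ along the path $\{\tilde\alpha_s\}$, supplemented by the Khovanskii--Teissier inequality for the cohomological equivalences (a)$\Leftrightarrow$(c)$\Leftrightarrow$(d), and by a uniform diameter bound for the geometric equivalence (a)$\Leftrightarrow$(b). Throughout we tacitly exclude the trivial case $\alpha_0 = 0$, which makes $\{\tilde\alpha_s\}$ a constant path and is the sole exception to the stated equivalences; in our collapsing setting this is automatic since $\alpha_0 = [f^*\omega_N] \neq 0$.

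The key computation goes as follows. Differentiating $\tilde\alpha_s = V_s^{-1/m}\alpha_s$ and using $V_s' = m\,\alpha_s^{m-1}\cdot\alpha$, one finds
\[
\dot{\tilde\alpha}_s = V_s^{-1/m}\Big(\alpha - \tfrac{V_s'}{mV_s}\,\alpha_s\Big),
\]
which is primitive with respect to $\tilde\alpha_s$. Expanding $G(\dot{\tilde\alpha}_s,\dot{\tilde\alpha}_s) = -\tilde\alpha_s^{m-2}\cdot\dot{\tilde\alpha}_s^{\,2}$ yields the closed form
\[
G(\dot{\tilde\alpha}_s,\dot{\tilde\alpha}_s) = \frac{(\alpha_s^{m-1}\cdot\alpha)^2 - V_s\,(\alpha_s^{m-2}\cdot\alpha^2)}{V_s^{2}},
\]
which is nonnegative by the Khovanskii--Teissier inequality for the K\"ahler classes $\alpha_s,\alpha$.

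The cohomological equivalences then follow quickly. If (a) fails, then $V_s$ is a strictly positive polynomial on $[0,1]$, so $G$ extends continuously to $s=0$, the path has finite length, and $\tilde\alpha_s \to V_0^{-1/m}\alpha_0$ converges in $H^{1,1}(M,\mathbb{R})$; both (c) and (d) fail. Conversely, assume (a) and $\alpha_0 \neq 0$, and set $k_0 := \min\{j\geq 1 : \alpha_0^{m-j}\cdot\alpha^j > 0\}$. The log-concavity of $j\mapsto \alpha_0^{m-j}\cdot\alpha^j$ (another consequence of Khovanskii--Teissier), combined with the fact that a nonzero nef class paired with the top power of a K\"ahler class is positive (so $\alpha_0\cdot\alpha^{m-1}>0$), forces $1 \leq k_0 \leq m-1$. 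Substituting the leading asymptotics $V_s \sim a_{k_0}\,s^{k_0}$, $V_s' \sim k_0 a_{k_0}\,s^{k_0-1}$ and the corresponding expansion of $\alpha_s^{m-2}\cdot\alpha^2$ gives, after a short algebraic simplification,
\[
G(\dot{\tilde\alpha}_s,\dot{\tilde\alpha}_s) \sim \frac{k_0(m-k_0)}{m^2(m-1)}\cdot\frac{1}{s^2} \qquad (s\to 0^+),
\]
with strictly positive leading coefficient, so $\sqrt{G}\sim C/s$ and the path length diverges; simultaneously $|\tilde\alpha_s| = V_s^{-1/m}|\alpha_s| \to \infty$, establishing (c) and (d).

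The geometric equivalence (a)$\Leftrightarrow$(b) is the remaining point. The direction (a)$\Rightarrow$(b) is trivial since $\Vol_{\tilde\omega_s}(B_{\tilde\omega_s}(p,1)) \leq \int_M \tilde\omega_s^m = V_s \to 0$. For the converse, assume along a sequence $s\to 0$ that $V_s \geq \eta > 0$; the Ricci-flat Bishop--Gromov inequality gives $\Vol_{\tilde\omega_s}(B_{\tilde\omega_s}(p,1)) \geq V_s /\operatorname{diam}(M,\tilde\omega_s)^{2m}$, reducing the matter to a uniform diameter bound for $(M,\tilde\omega_s)$. This diameter bound, in the regime where $\alpha_s$ degenerates to a big and nef (but non-K\"ahler) class $\alpha_0$ with $\alpha_0^m > 0$, is the main technical input and the principal obstacle in the proof; fortunately it is by now a known consequence of uniform $L^\infty$ estimates on the Calabi--Yau potentials in the style of Ko\l odziej, combined with the diameter estimates of Guo--Phong--Song--Sturm (with earlier results of Tosatti in the setting of degenerating K\"ahler classes). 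The remainder of the argument is the elementary algebraic computation sketched above.
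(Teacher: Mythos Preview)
Your proof is correct and takes essentially the same approach as the paper's: both compute $G(\dot{\tilde\alpha}_s,\dot{\tilde\alpha}_s)$ directly, obtain the identical leading asymptotic $\frac{k(m-k)}{m^2(m-1)}\cdot s^{-2}$ (your $k_0$ is the paper's $k=m-\nu(\alpha_0)$), and reduce (b)$\Rightarrow$(a) to Bishop--Gromov plus a uniform diameter bound, for which the paper cites the earlier references \cite{To0,RuZ} rather than the later Guo--Phong--Song--Sturm work. Your explicit exclusion of the degenerate case $\alpha_0=0$ (where (a) holds but (c) and (d) fail) and your invocation of Khovanskii--Teissier for the log-concavity of $j\mapsto\alpha_0^{m-j}\cdot\alpha^{j}$ are welcome clarifications that the paper leaves implicit.
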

\begin{proof} (a) $\Rightarrow$ (b) since $$0=\alpha_0^m=\lim\limits_{s\rightarrow 0} \alpha_s^m= \lim\limits_{s\rightarrow 0} {\rm Vol}_{\ti{\omega}_s}(M).$$
(b) $\Rightarrow $ (a):
 By \cite{To0, RuZ}, we have $${\rm diam}(M, \ti{\omega}_s)\leq C,$$ for some constant $C>0$.  The Bishop-Gromov volume comparison theorem shows
  $$\alpha_0^m= \lim\limits_{s\rightarrow 0} {\rm Vol}_{\ti{\omega}_s}(M)\leq C^{2m}\lim\limits_{s\rightarrow 0} {\rm Vol}_{\ti{\omega}_s}(B_{\ti{\omega}_s}(p,1)) =0.$$

   (a) $ \Rightarrow $ (c): Let $0\leq d<m$ be the numerical dimension of $\alpha_0$, which is characterized as the maximum $\ell\geq 1$ such that $\alpha_0^\ell\cdot\alpha^{m-\ell}>0$, and denote by $k=m-d\geq 1$.     We have $$V_s=(\alpha_0+s\alpha)^m=  s^k c_k^m \alpha^k\cdot \alpha_0^{m-k}+O(s^{k+1})$$ where $c_k^m=\frac{m!}{k!(m-k)!}$,  and the velocity
   $$v_s= \dot{\tilde{\alpha}}_s=V_s^{-\frac{1}{m}}\alpha-\frac{1}{m}V_s^{-1}\tilde{\alpha}_s\dot{V}_s.$$ Since $v_s \cdot  \tilde{\alpha}_s^{m-1}=0$, we have
   \[\begin{split}    G(v_s,v_s)&=-V_s^{-\frac{1}{m}}\alpha \cdot v_s \cdot \tilde{\alpha}_s^{m-2} \\ & = -V_s^{-\frac{2}{m}}\alpha^2 \cdot \tilde{\alpha}_s^{m-2} + \frac{\dot{V}_s}{mV_s^{1+\frac{1}{m}}}\alpha  \cdot \tilde{\alpha}_s^{m-1}\\ &=  -V_s^{-1}\alpha^2 \cdot (\alpha_0+s\alpha)^{m-2} + \frac{\dot{V}_s}{mV_s^{2}}\alpha  \cdot (\alpha_0+s\alpha)^{m-1}.
\end{split}\] We then calculate $ \dot{V_s}=k s^{k-1} c_k^m \alpha^k\cdot \alpha_0^{m-k}+O(s^{k}),$
$$ \alpha^2 \cdot (\alpha_0+s\alpha)^{m-2}=\begin{cases}
s^{k-2}c^{m-2}_{k-2}  \alpha^k\cdot \alpha_0^{m-k} +O(s^{k-1}), & {\rm if}\;k \geq 2,\\
O(1) & {\rm if}\;k =1.
\end{cases}  $$ $$\alpha \cdot (\alpha_0+s\alpha)^{m-1}=s^{k-1}c^{m-1}_{k-1}  \alpha^k\cdot \alpha_0^{m-k} +O(s^{k}). $$   Thus $$ G(v_s,v_s)=\frac{1}{s^2}\left(\frac{k(m-k)}{m^2(m-1)}+O(s)\right).$$ We obtain that $$ \int_\varepsilon^1\sqrt{G(v_s,v_s)}ds \geq - c'\log \varepsilon \rightarrow \infty,$$ as $\varepsilon \rightarrow 0$, and so the length of the path $\{\tilde{\alpha}_s\}_{s\in (0,1]}$ with respect to $G$ is infinite.

   (c) $\Rightarrow$ (a): Assume that $\alpha_0^m>0$. A similar calculation as above shows that $$G(v_s,v_s)\leq C,$$
     and so the length of the path $\{\tilde{\alpha}_s\}_{s\in (0,1]}$ with respect to $G$ is finite, a contradiction.

   Finally, (a) $\Leftrightarrow$ (d) since $\tilde{\alpha}_s$ diverges if and only if $V_s \rightarrow0$.
\end{proof}

\section{The K\"ahler-Ricci flow setting}\label{sectkrf}
The setup in this section will be the K\"ahler-Ricci flow setting described in the Introduction, namely we let $M$ be an $m$-dimensional compact K\"ahler manifold with $K_M$ semiample and $0<\kappa(M)=:n<m$, and $f:M\to \mathbb{P}^r$ is the semiample fiber space given by the linear system $|\ell K_M|$ for some $\ell\geq 1$ sufficiently divisible, with image $N\subset\mathbb{P}^r$ a normal projective variety of dimension $n$ (the canonical model of $M$). The map $f$ has connected fibers, and if as before we let $D\subset N$ be the singular locus of $N$ together with the critical values of $f$ on $N^{\rm reg}$, then every fiber of $f$ over $N_0:=N\backslash D$ is a Calabi-Yau $(m-n)$-fold. We write $\omega_N=\frac{1}{\ell}\omega_{FS}|_N$, so that $f^*\omega_N$ belongs to $c_1(K_M)$, and we fix also a basis $\{s_i\}$ of $H^0(M,\ell K_M)$, which defines the map $f$, and let
$$\mathcal{M}=\left((-1)^{\frac{\ell m^2}{2}}\sum_i s_i\wedge\ov{s_i}\right)^{\frac{1}{\ell}},$$
which is a smooth positive volume form on $M$ which satisfies
$$\Ric(\mathcal{M})=-\ddbar\log\mathcal{M}=-f^*\omega_M.$$
On $N_0$ we then have a twisted K\"ahler-Einstein metric $\omega=\omega_N+\ddbar\vp$ which satisfies \eqref{twisted} with $\lambda=-1$ and is constructed in \cite{ST,To1,EGZ2,CGZ} by solving the Monge-Amp\`ere equation
$$(\omega_N+\ddbar\vp)^n = e^\vp f_*\mathcal{M},$$
with $\vp\in C^0(N)$, $\vp$ is $\omega_N$-psh and smooth on $N_0$.

By \cite{ST0,ST,TWY} we know that if $\omega_0$ is any K\"ahler metric on $M$ and $\omega(t)$ is its evolution by the K\"ahler-Ricci flow
$$\frac{\de}{\de t}\omega(t)=-\Ric(\omega(t))-\omega(t),\quad \omega(0)=\omega_0,$$
then $\omega(t)$ exists for all $t\geq 0$ and as $t\to\infty$ we have that $\omega(t)\to f^*\omega$ in $C^0_{\rm loc}(M\backslash f^{-1}(D))$, so $\omega$ is the collapsed limit of the K\"ahler-Ricci flow, away from the singular fibers of $f$.

The main result of this section is Theorem \ref{prin2}. The first ingredient is the following generalization of the estimates in section \ref{sectvol} in Theorem \ref{main2}. Let $\pi:\ti{N}\to N$ be a birational morphism with $\ti{N}$ smooth and $E=\pi^{-1}(D)$ is a simple normal crossings divisor $E=\bigcup_{j=1}^\mu E_j$, and fix defining sections $\tau_j$ and smooth metrics $h_j$ for $\mathcal{O}(E_j)$.
\begin{theorem}\label{main4}
There is a constant $C>0$ and natural numbers $d\in\mathbb{N}, 0\leq p\leq \mu$ and rational numbers $\beta_i>0, 1\leq i\leq p,$ and $0<\alpha_i\leq 1$, $p+1\leq i\leq \mu$, such that on $\pi^{-1}(N_0)$ we have
\begin{equation}\label{ggg}
C^{-1}\prod_{j=1}^p|\tau_j|_{h_j}^{2\beta_j}\omega_{\rm cone}^n\leq \pi^*f_*\mathcal{M}\leq C\prod_{j=1}^p|\tau_j|_{h_j}^{2\beta_j}\left(1-\sum_{i=1}^\mu\log|\tau_i|_{h_i}\right)^d \omega_{\rm cone}^n,
\end{equation}
where $\omega_{\rm cone}$ is a conical metric with cone angle $2\pi\alpha_i$ along the components $E_i$ with $p+1\leq i\leq \mu$.
\end{theorem}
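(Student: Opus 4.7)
Plan. The strategy is to reduce Theorem \ref{main4} to the already-proven Calabi--Yau case of Theorem \ref{main2} via a standard $\ell$-fold ramified covering of the base; the main issue preventing a direct application is the $1/\ell$ power in the definition of $\mathcal{M}$, and the cover is designed precisely to remove it.

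First, I would construct a finite cover $\nu\colon \hat N\to N$ on which $\nu^*\mathcal{O}_N(1)$ becomes an $\ell$-th power, for instance by pulling back via the $\ell$-th power endomorphism $[z_0:\cdots:z_r]\mapsto [z_0^\ell:\cdots:z_r^\ell]$ of $\mathbb{P}^r$ restricted to $N$. Then $\nu^*\mathcal{O}_N(1)\cong \mathcal{L}^{\otimes\ell}$ for some line bundle $\mathcal{L}$ on $\hat N$, and each section $\nu^*\eta_i$ of $\nu^*\mathcal{O}_N(1)$ (where $s_i=f^*\eta_i$ for some $\eta_i\in H^0(N,\mathcal{O}_N(1))$) is an $\ell$-th power of some $\hat\sigma_i\in H^0(\hat N,\mathcal{L})$. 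I would form the fiber product $M\times_N \hat N$, take a resolution of singularities to obtain $M'$ smooth, and denote by $f'\colon M'\to\hat N$ and $\mu\colon M'\to M$ the induced maps; the generic fibers of $f'$ are still Calabi--Yau manifolds.

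On $M'$ one has $(f')^*\mathcal{L}^{\otimes\ell}=\mu^*(\ell K_M)$, so $(f')^*\mathcal{L}$ agrees with $K_{M'}$ up to the ramification divisor of $\mu$ and possible $\ell$-torsion (which I would kill by passing to a further cover if needed). Consequently each $(f')^*\hat\sigma_i$ becomes, up to smooth bounded factors, a holomorphic $m$-form $\Omega_i$ on $M'$, and the pullback $\mu^*\mathcal{M}=(\sum_i\mu^*s_i\wedge\overline{\mu^*s_i})^{1/\ell}$ is locally quasi-isometric to $\sum_i \Omega_i\wedge\overline{\Omega_i}$, thanks to the elementary pointwise bound $c^{-1}\sum_i|x_i|^2\leq (\sum_i|x_i|^{2\ell})^{1/\ell}\leq c\sum_i|x_i|^2$ applied in a local trivialization. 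The right-hand side is exactly the $\ell=1$ Calabi--Yau expression handled by the Hodge-theoretic arguments of section \ref{sectvol}: applying Theorem \ref{volest} (or Theorem \ref{main2}) term-by-term to each $\Omega_i\wedge\overline{\Omega_i}$ on the fiber space $f'\colon M'\to\hat N$ yields the required local volume-form estimates on an snc resolution of $\hat N$.

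It then remains to descend the estimates back to $\tilde N$ via the finite map $\hat N\to N$: since this map has bounded degree and explicit ramification, the local estimates on $\hat N$ translate into estimates of the same form on $\tilde N$, with the exponents $\beta_j,\alpha_j$ rescaled by the ramification data, and the patching into the global statement \eqref{ggg} involving $\omega_{\rm cone}$ and the sections $\tau_j$ follows exactly as at the end of the proof of Theorem \ref{main2}. The main obstacle I expect is the bookkeeping of line bundles and ramification---comparing $K_{M'}$, $\mu^*K_M$ and $(f')^*\mathcal{L}$ carefully so that the $(f')^*\hat\sigma_i$ really do produce bona fide Calabi--Yau-type pushforward integrals on $M'$---and verifying that the descent via the finite cover preserves both the rationality of the exponents and their required ranges $\alpha_j\in(0,1]$ and $\beta_j>0$.
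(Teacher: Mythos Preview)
Your strategy is workable but takes a genuinely different route from the paper's. You propose to cover the \emph{base} $N$ (via the $\ell$-th power map on $\mathbb{P}^r$), base-change $M$, and then descend; the paper instead covers the \emph{total space} $M$ and keeps the base unchanged. Concretely, the paper works locally near a fixed $y_0\in E$: it chooses one section $s_1\in H^0(M,\ell K_M)$ with smooth zero locus not meeting the fiber over $\pi(y_0)$, and forms the $\ell$-cyclic cover $\nu:\hat M\to M$ branched along $\{s_1=0\}$. On $\hat M$ there is $\rho\in H^0(\hat M,K_{\hat M})$ with $\nu^*s_1=\rho^{\otimes\ell}$, and near the fiber in question $\rho$ is nowhere zero. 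Then $\nu^*\mathcal{M}$ is uniformly equivalent to $(-1)^{m^2/2}\rho\wedge\bar\rho$ on that neighborhood, and one is back in the situation of Theorem~\ref{volest} for the map $f\circ\nu:\hat M\to N$; the only new wrinkle is that the fibers of $f\circ\nu$ may be disconnected, which is handled by applying the Hodge-theoretic argument on one connected component and using the residual deck action.

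The advantage of the paper's approach is that the base and its resolution $\tilde N\to N$ are untouched, so there is no descent step at all: the estimate comes out directly on $\tilde N$. Your approach introduces a ramified cover of $N$ whose branch locus (the coordinate hyperplanes restricted to $N$) has no a priori relation to the discriminant $D$; you would then need a common log resolution dominating both $\tilde N$ and a resolution of $\hat N$, and must check that the extra ramification contributions cancel upon pushing down. This is exactly the ``bookkeeping of line bundles and ramification'' you flag as the main obstacle, and it is a real one --- not fatal, but it makes your argument longer and the verification that the resulting exponents are rational and land in the correct ranges less transparent. The paper's route sidesteps all of this by never leaving the original base.
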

\begin{proof}
As in the proof of Theorem \ref{main2}, it suffices to prove the analogous local statement (as in \eqref{estimate1}) near an arbitrary point $y_0\in E$.
We will use a ramified $\ell$-cyclic covering trick to reduce ourselves directly to Theorem \ref{volest}. Recall that $\ell K_M=f^*\mathcal{O}_{\mathbb{P}^r}(1)$ is semiample, and so we may choose the basis of sections $s_i\in H^0(M,\ell K_M)$ (which are pullbacks of linear forms on $\mathbb{P}^r$) such that their zero loci $\{s_i=0\}$ are smooth, reduced and irreducible effective divisors. At least one of these linear forms doesn't vanish at $\pi(y_0)$, and we may assume it is the one that pulls back to $s_1$ on $M$. We can then construct $\nu:\hat{M}\to M$ an $\ell$-cyclic covering ramified along $\{s_1=0\}$, see e.g. \cite[Proposition 4.1.6]{Laz}. It satisfies that $\hat{M}$ is connected and smooth, with a $\mathbb{Z}_\ell$-action with quotient $M$, and there is $\rho\in H^0(\hat{M},K_{\hat{M}})$ such that $\nu^*s_1=\rho^{\otimes\ell}$. Note that by construction $\rho$ doesn't vanish on an open set of the form $\hat{U}=\nu^{-1}(f^{-1}(U))$ for some open neighborhood $U$ of $\pi(y_0)$ in $N$. On $\hat{U}$ we may then write $\nu^*s_j=f_j \rho^{\otimes\ell}, j\geq 2$, for some holomorphic functions $f_j$. Then
$$\nu^*\mathcal{M}=\left((-1)^{\frac{\ell m^2}{2}}\sum_i \nu^*s_i\wedge\ov{\nu^*s_i}\right)^{\frac{1}{\ell}},$$
which is uniformly equivalent to
$$\sum_i \left((-1)^{\frac{\ell m^2}{2}}\nu^*s_i\wedge\ov{\nu^*s_i}\right)^{\frac{1}{\ell}}=(-1)^{\frac{m^2}{2}}\rho\wedge\ov{\rho}\left(1+\sum_{i\geq 2}|f_j|^2\right)^{\frac{1}{\ell}},$$
and so also uniformly equivalent to $(-1)^{\frac{m^2}{2}}\rho\wedge\ov{\rho}$. Thus, the aymptotic behavior of $\pi^*f_*\mathcal{M}$ is, up to constants, the same as the behavior of
$$(-1)^{\frac{m^2}{2}}\pi^*f_*\nu_*(\rho\wedge\ov{\rho}).$$
This is now almost the same setting as in Theorem \ref{volest}, the only difference is that the fibers of $f\circ\nu$ need not be connected anymore. For any $y\in U$, write $(f\circ\nu)^{-1}(y)=F_1\cup\dots\cup F_d$, where the $F_j$ are the connected components of the fiber. Then $d|\ell$ and every $F_j$ is invariant under the induced $\mathbb{Z}_{\ell/d}$-action, with $F_j/\mathbb{Z}_{\ell/d}=f^{-1}(y)$, while the quotient $\mathbb{Z}_d$-action on $(f\circ\nu)^{-1}(y)$ interchanges the components. The number $d$ is locally constant for $y\in U\backslash D$, and so we conclude by repeating the same argument as in Theorem \ref{volest} to any of the components $F_j$.
\end{proof}

\begin{proof}[Proof of Theorem \ref{prin2}]
As recalled earlier, the twisted K\"ahler-Einstein metric $\omega=\omega_N+\ddbar\vp$ on $N_0$ satisfies
$$\omega^n = e^\vp f_*\mathcal{M}=e^\vp\mathcal{F}\omega_N^n,$$
where $\vp\in C^0(N)$, $\mathcal{F}$ is defined by the last equality, and as explained above it is completely analogous to the function $\mathcal{F}$ in \eqref{deff}. It satisfies
$$\ddbar\log\mathcal{F}=-\omega_{\rm WP}+\mathrm{Ric}(\omega_N)+\omega_N.$$
As in sections \ref{codim2} and \ref{sectsnc} we see that $\mathcal{F}$ extends to a smooth positive function across $D^{(2)}\cap N^{\rm reg}$, and it satisfies $\mathcal{F}\geq C^{-1}$ since the proof of Lemma \ref{moron} goes through with minimal changes once we notice (\cite[Lemma 3.3]{ST}) that on $M\backslash S$ we have
$$f^*\mathcal{F}=\frac{\left((-1)^{\frac{\ell m^2}{2}}\sum_i s_i\wedge\ov{s_i}\right)^{\frac{1}{\ell}}}{f^*\omega_N^n\wedge\omega_{\rm SRF}^{m-n}},$$
and again the numerator is strictly positive on $M$.
As in section \ref{sectsnc}, the lower bound for $\mathcal{F}$ implies that if $N$ is smooth and $D^{(1)}$ is snc then we actually have on $N_0$
$$C^{-1}\omega_{\rm cone}^n\leq \omega^n\leq C\left(1-\sum_{i=1}^\mu\log|\tau_i|_{h_i}\right)^d \omega_{\rm cone}^n,$$
while in general we have on $\ti{N}\backslash E$
$$C^{-1}H\omega_{\rm cone}^n\leq (\pi^*\omega)^n\leq CH\left(1-\sum_{i=1}^\mu\log|\tau_i|_{h_i}\right)^d \omega_{\rm cone}^n,
$$
where $H=\prod_{j=1}^p |\tau_j|_{h_j}^{2\beta_j}$.
The proof of \eqref{quasiisom} given in section \ref{sectsnc} then goes through with the appropriate small changes, defining now the function $\psi$ by
$$\omega^n=\frac{e^\vp\psi}{\prod_{j=1}^\mu|\tau_j|^{2(1-\alpha_j)}_{h_j}}\omega_N^n,$$
and similarly, the proof of Theorem \ref{prin1} given in section \ref{sectgen} goes through, defining now $\psi$ by
$$\pi^*\omega^n=\frac{e^{\pi^*\vp}\psi H}{\prod_{j=1}^\mu|\tau_j|^{2(1-\alpha_j)}_{h_j}}\omega_{\ti{N}}^n.$$
In this way, we obtain the proof of Theorem \ref{prin2}.
\end{proof}

\end{document}